\newcommand{\lemref}[1]{Lemma~\ref{#1}}
\newcommand{\implica}{\mathbin{\rightarrow}}
\newcommand{\leftmodels}{%
  \mathrel{\text{\reflectbox{$\models$}}}%
}
\newcommand{\BL}{{\rm BL}} 
\newcommand{\BLpt}{{\rm BL$\forall$}}
\newcommand{\mBLpt}{{\rm mBL$\forall$}}
\newcommand{\ScBL}{{\rm S5(BL)}}
\newcommand{\ScpBL}{{\rm S5'(BL)}}
\DeclareMathOperator{\Fg}{Fg}
\DeclareMathOperator{\FMg}{MFg}
\DeclareMathOperator{\Con}{\mathit{Con}}
\DeclareMathOperator{\Conn}{\mathbf{Con}}
\theoremstyle{plain}
\newtheorem{thm}{Theorem}[section]
\newtheorem{lem}[thm]{Lemma}
\newtheorem{cor}[thm]{Corollary}
\theoremstyle{definition}
\newtheorem{dfn}[thm]{Definition}
\newtheorem{example}[thm]{Example}
\theoremstyle{remark}
\newtheorem{obs}[thm]{Remark}
\begin{document}

\title{Monadic BL-algebras: the equivalent algebraic semantics of Hájek's monadic fuzzy logic}

\author{Diego Castaño$^{a}$\footnote{Corresponding author. E-mail addresses:  diego.castano@uns.edu.ar (D. Castaño), crcima@criba.edu.ar (C. Cimadamore), usdiavar@criba.edu.ar (J. P. Díaz Varela), larueda@criba.edu.ar (L. Rueda)} \ , Cecilia Cimadamore$^{a}$, José Patricio Díaz Varela$^{a}$, Laura Rueda$^{a}$}

\maketitle

$^a$ Departamento de Matemática, Universidad Nacional del Sur, Bahía Blanca 8000, Argentina. \\ INMABB - CONICET, Bahía Blanca 8000, Argentina.

\begin{abstract}
In this article we introduce the variety of monadic BL-algebras as BL-algebras endowed with two monadic operators $\forall$ and $\exists$. After a study of the basic properties of this variety we show that this class is the equivalent algebraic semantics of the monadic fragment of Hájek's basic predicate logic. In addition, we start a systematic study of the main subvarieties of monadic BL-algebras, some of which constitute the algebraic semantics of well-known monadic logics: monadic Gödel logic and monadic \L ukasiewicz logic. In the last section we give a complete characterization of totally ordered monadic BL-algebras.
\end{abstract}

Keywords: Mathematical Fuzzy Logic; Monadic Logic; BL-algebras.

\section{Introduction}

In his book \cite{Hajek98libro} Hájek introduced BL-algebras as the algebraic semantics of his basic fuzzy logic, which is a common framework for \L ukasiewicz, Gödel and product logics. Afterwards in \cite{CiEsGoTo00} it was shown that Hájek's basic logic was the logic of continuous t-norms (see also \cite{CiTo05}). Subsequently BL-algebras were studied in great depth, see e.g. \cite{AgMo03,BuMo11handbook}. BL-algebras were also seen to be the subvariety of bounded integral commutative divisible residuated lattices generated by chains (see \cite{Hajek98libro}).

In \cite{Hajek98libro} Hájek also introduced the basic many-valued predicate logic and proved its strong completeness with respect to its (linear) general semantics, that is, the semantics based on Kripke frames where the
accessibility relation is total and the truth values lie on a BL-chain. A brief description of the monadic fragment of this calculus is also presented. Recall that the monadic fragment consists of the formulas with unary predicates and just one object variable. In addition, Hájek introduced an S5-like modal fuzzy logic and showed that it is equivalent to the monadic basic predicate logic. He also proposed a set of axioms and inference rules for the monadic logic and proved its strong completeness with respect to its (linear) general
semantics in \cite{Hajek10}.

Monadic algebras have been studied since Halmos introduced monadic
Boolean algebras in \cite{Halmos56}. Monadic versions of other algebraic
structures have been also greatly studied since then. The two most
important examples are monadic MV-algebras and monadic Heyting
algebras. The former were first studied by Rutledge in \cite{Rutledge59} and then by Di Nola, Grigolia, Cimadamore and Díaz Varela in \cite{DNGr04,CiDV14}. The latter were
introduced by Monteiro and Varsavsky in \cite{MoVa57} and deeply studied by Bezhanishvili in \cite{Bezhanishvili98}. Monadic \L ukasiewicz-Moisil algebras were
also studied by Abad in \cite{Abad88} and Heyting
algebras with one quantifier were the research topic of Rueda in \cite{Rueda01}.

In this article we will introduce the variety of monadic
BL-algebras\footnote{We should warn the reader that a different kind
of algebraic structures were introduced by Grigolia in \cite{Grigolia06} under
the same name. However, it may be seen that they are not the
equivalent algebraic semantics for Hájek's monadic calculus.}and
make a standard study of their basic properties, which includes the
characterization of their congruences and subdirectly irreducible
algebras. This will be the main topic of Section 2. In Section 3 we will give a complete characterization of the range of the monadic operators: $m$-relatively complete subalgebras. This characterization will be useful to produce the most important examples of monadic BL-algebras which are functional monadic BL-algebras. In the next section, Section 4, we will show that this variety is the equivalent algebraic semantics of Hájek's
monadic basic fuzzy logic in the sense of Blok and Pigozzi \cite{BlPi89} as well as simplify the original axioms proposed by Hájek. In Section 5 we will see that monadic BL-algebras contain as subvarieties the variety of monadic MV-algebras and monadic Gödel-algebras, the latter being monadic prelinear Heyting algebras that satisfy the equation $\forall(\exists x \vee y) \approx \exists x \vee \forall y$. We will also introduce the subvariety of monadic
product algebras and give a special characterization of its subdirectly irreducible members. In addition, in each of these three main subvarieties will give a complete characterization of their totally ordered members. Moreover, we devote Section 6 to study totally ordered monadic BL-algebras in depth. Specifically
we will show how to define all possible quantifiers on a given BL-chain. Finally, we conclude the paper describing  some of the problems about this variety that constitute our current work.

Throughout this article we assume that the reader is familiar with
propositional as well as first order basic logic and with structural
properties of BL-algebras.

\section{Monadic BL-algebras: definition and representation theorems}

We start this section with the definition of the variety
$\mathbb{MBL}$ of monadic BL-algebras. We develop the basic
arithmetical properties and prove that the image of the quantifier
is a subalgebra. We also introduce the notion of monadic filter and show that they correspond to congruences. As a
corollary, we derive a characterization for subdirectly irreducible
algebras and discuss a special BL-subdirect representation for them.
We refer the reader to \cite{Hajek98libro} for the definition and basic
properties of BL-algebras.

\newcounter{saveenum_mbl}

\begin{dfn}An algebra $\mathbf{A} = \langle A, \vee, \wedge, *,\implica,\exists, \forall, 0,1 \rangle$ of type
$(2,2,2,2,1,1,0,0)$ is called a \emph{monadic BL-algebra} (an
MBL-algebra for short) if $\langle A,\vee, \wedge, *,\implica,
0,1\rangle$ is a BL-algebra and the following identities are
satisfied:
\begin{enumerate}[(M1)]

 \item\label{M1} $\forall x\implica x\approx 1$.

 \item\label{M2} $\forall ( x\implica \forall y)\approx \exists x\implica \forall y$.

 \item \label{M3}$\forall (\forall x\implica y)\approx \forall x\implica \forall y$.

 \item\label{M4} $\forall ( \exists x\vee y)\approx \exists x\vee \forall y$.

 \item\label{M5} $\exists (x*x)\approx \exists x*\exists x$.

\setcounter{saveenum_mbl}{\value{enumi}}
\end{enumerate}
\end{dfn}

For brevity, if $\mathbf{A}$ is a BL-algebra and we enrich it with a monadic structure, we denote the resulting algebra by $\langle \mathbf{A}, \exists, \forall\rangle$. We denote by $\mathbb{MBL}$ the variety of MBL-algebras. The next
lemma collects some of the basic properties that hold true in any
MBL-algebra.

\begin{lem}\label{properties MBL} Let $\mathbf A\in \mathbb{MBL}$ and $a,b\in A$.
\begin{enumerate}[{\rm (M1)}]
\setcounter{enumi}{\value{saveenum_mbl}}
\begin{multicols}{2}

\item\label{M6} $\forall \exists a=\exists a$.

\item\label{M7} $a\implica \exists a=1$.

\item \label{M8} $\forall (\exists a\implica b)=\exists a\implica \forall b$.

 \item\label{M9} $\forall ( a\implica \exists b)= \exists a\implica \exists b$.

 \item\label{M10} $\forall 1=1$.

 \item\label{M11} $\exists \forall a=\forall a$.

 \item\label{M12} $\forall ( \forall a\vee b)= \forall a\vee \forall b$.

\item\label{M13} $\forall 0=0$, $\exists 1=1$, and $\exists 0 =0$.

\item\label{M14} $\exists \exists a = \exists a$ and $\forall \forall a = \forall
a$.
\item\label{M15} $ \forall(\exists a \implica  \exists b)= \exists a \implica \exists
b$.
\item\label{M16} $ \exists(\exists a \implica  b)\implica( \exists a \implica \exists
b) = 1$.
\item\label{M17} If $a\leq b$, then $\forall a \leq \forall b$ and $\exists a \leq  \exists
b$.
\item\label{M18} $\forall( \exists a \vee \exists b)= \exists a \vee \exists b$.
\item\label{M19} $\forall a = a$ if and only if $\exists a =a$.
\item\label{M20} $\exists( a \vee  b)= \exists a \vee \exists b$.
\item\label{M21} $\exists (\exists a * \exists b)= \exists a * \exists b$.
\item\label{M22} $\forall (a\implica b)\implica (\forall a\implica \forall b)=1$.
\item\label{M23} $\forall (a\implica b)\implica (\exists a\implica \exists b)=1$.
\item\label{M24} $(\forall a*\exists b)\implica \exists (a*b)=1$.
\item\label{M25} $(\forall a*\forall b)\implica \exists (a*b)=1$.
\item\label{M26} $\exists (a*\exists b)=\exists a*\exists b$.
\item\label{M27} $\exists (a*\forall b)=\exists a*\forall b$.
\item\label{M28} $\exists(a\implica \exists b) \implica (\forall a\implica \exists b)=1$.
\item\label{M29} $\exists (\exists a\implica \exists b)=\exists a\implica \exists b$.
\item\label{M30} $\exists (\forall a\implica \forall b )=\forall a\implica \forall b$.
\item\label{M31} $\exists (\exists a\wedge \exists b)=\exists a\wedge \exists b$.
\item\label{M32}  $\exists ( a\wedge \exists b)=\exists a\wedge \exists b$.
\item\label{M33} $\forall (\forall a\implica \forall b )= \forall a\implica \forall b $.
\item\label{M34} $\exists (\forall a *\forall b )= \forall a* \forall b$.
\item\label{M35} $\forall (\forall a *\forall b )= \forall a* \forall b$.
\item\label{M36} $\forall (\forall a \wedge\forall b )= \forall a\wedge \forall b$.
\item\label{M37} $\forall (a \wedge b )= \forall a\wedge \forall b$.
\end{multicols}
\setcounter{saveenum_mbl}{\value{enumi}}
\end{enumerate}
\end{lem}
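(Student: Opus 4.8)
The plan is to prove (M6)--(M37) one at a time, in the stated order, each from the axioms (M1)--(M5) and the items already established, using only elementary BL-algebra arithmetic: residuation ($x*y\le z$ iff $x\le y\implica z$), the law $x\le y$ iff $x\implica y=1$, divisibility $x\wedge y=x*(x\implica y)$, prelinearity $(x\implica y)\vee(y\implica x)=1$, distribution of $*$ over $\vee$, and monotonicity of all operations. Two patterns carry most of the load. First, a \emph{sandwich} for statements of the form $\exists t=t$ (equivalently $\forall t=t$, once (M19) is available): it suffices to check that $t$ is a fixpoint of $\exists$ and that every term to which $\exists$ is applied on the left is $\le t$, and then invoke monotonicity of $\exists$. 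Second, a large number of items are simply one instantiation of an axiom followed by a rewrite using an earlier item.

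For the items living in the $\{\vee,\wedge,\implica,\forall,\exists\}$-fragment this is routine. Thus (M6) is (M4) at $y:=\exists a$ (plus (M1)); (M9) is (M2) at $y:=\exists b$ after rewriting $\forall\exists b=\exists b$ by (M6); (M8) is (M3) at $x:=\exists a$ via (M6); (M10) is (M3) at $y:=x$ via (M1); (M7) is (M9) at $b:=a$; monotonicity (M17) comes from (M3), (M9) and (M10); (M11) is (M2) at $x:=\forall a$, $y:=a$, using (M10); (M12) is (M4) with $\exists x$ specialised to $\exists\forall a=\forall a$ by (M11); (M13) follows from (M1), (M7) and (M2) instantiated at $0$; the $\forall$-half of (M14) is (M12) at $b:=0$ together with (M13); (M19) is (M6) and (M11) combined. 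The remaining items of this fragment---(M14)--(M16), (M18), (M20), (M22), (M23), (M28)--(M30), (M33)---then yield to the sandwich pattern together with residuation; for example (M20) follows from $a\vee b\le\exists a\vee\exists b$, the fact that the right-hand side is a fixpoint of $\exists$ by (M18) and (M19), and (M17). (Note that (M22) and (M23), although they mention $*$, do not need axiom (M5).)

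The substantive part is the block of identities genuinely involving the monoid operation $*$ (or $\wedge$ in a form not reducible to $\implica$): (M21), (M24)--(M27), (M31), (M32), (M34)--(M37). I would first prove the two inequalities (M24) $\forall a*\exists b\le\exists(a*b)$ and (M25) $\forall a*\forall b\le\exists(a*b)$ without using (M5): from $\forall a*b\le a*b\le\exists(a*b)$ residuation gives $b\le\forall a\implica\exists(a*b)$, the right-hand side is a fixpoint of $\forall$ (hence of $\exists$) by (M9), (M11) and (M19), so applying $\exists$ and (M17) yields (M24), and (M25) follows with (M7). Now axiom (M5) enters, to pass from these inequalities (plus divisibility) to the equalities (M26) $\exists(a*\exists b)=\exists a*\exists b$ and (M27) $\exists(a*\forall b)=\exists a*\forall b$; then (M21) is the instance of (M26) at $a:=\exists a$, using (M14), and (M34)--(M35) are obtained in the same way and are equivalent to each other via (M19). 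Finally the meet-versions (M31), (M32), (M36), (M37) drop out by rewriting $x\wedge y=x*(x\implica y)$, since an implication between fixpoints is again a fixpoint by (M15).

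The hard part is precisely this last block. The obstacle is that monotonicity alone gives only $\exists(u*v)\le\exists u\wedge\exists v$---under $\exists$ a product threatens to ``round up'' to the meet---so proving $\exists(u*v)=\exists u*\exists v$ on the image of the quantifier really does need (M5) to control squares, propagated to arbitrary products by means of divisibility, prelinearity and the distribution of $*$ over $\vee$; choosing the right instantiations there is where the work lies. Apart from that, and the bookkeeping required to order (M6)--(M37) so that each step uses only its predecessors (the structure theory for $\mathbb{MBL}$ not being available yet), the lemma is long rather than deep.
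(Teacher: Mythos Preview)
Your overall plan---prove each item from the axioms and the items already established, using residuation and basic BL-arithmetic---is exactly what the paper does, and most of your individual derivations (for (M6)--(M20), (M22)--(M25), (M28)--(M30), (M33)) agree with the paper's, up to harmless reordering.

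The one substantive divergence is your treatment of the $*$-block (M21), (M26), (M27), (M31), (M32), (M34)--(M37). You claim that ``axiom (M5) enters'' here and propose to pass from the inequalities (M24)--(M25) to the equality (M26) using (M5) together with divisibility and prelinearity, then recover (M21) as a special case of (M26). In fact the paper never invokes (M5) anywhere in the proof of this lemma: all of (M6)--(M37) follow from (M1)--(M4) alone. The crux is a direct proof of (M21) that you are missing. Writing $X=\exists a*\exists b$, one has by (M2)
\[
\exists X\implica\forall X=\forall\bigl(X\implica\forall X\bigr)=\forall\bigl(\exists a\implica(\exists b\implica\forall X)\bigr),
\]
and two applications of (M8) rewrite the right-hand side as $\forall\forall\bigl(\exists a\implica(\exists b\implica X)\bigr)=\forall\forall 1=1$. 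Thus $\exists X\le\forall X\le X$, which together with (M7) gives (M21). With (M21) available, (M26) follows immediately: the $\le$ direction is $\exists(a*\exists b)\le\exists(\exists a*\exists b)=\exists a*\exists b$ by (M17) and (M21), and the $\ge$ direction is exactly the residuation argument you already have for (M24). Then (M27) and (M31)--(M37) fall out essentially as you describe.

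Your proposed alternative route through (M5) is where the genuine gap lies. The natural attempt---bound $a*\exists b\le(a\vee\exists b)^2$, apply (M5) and (M20) to get $\exists(a*\exists b)\le(\exists a\vee\exists b)^2$---overshoots, since $(\exists a\vee\exists b)^2\ge\exists a*\exists b$ rather than $\le$. I do not see how to close this using only (M5), divisibility and prelinearity without first establishing that $\exists a*\exists b$ is a fixpoint of $\exists$, which is precisely (M21). So as written, your plan for (M26) is circular: you derive (M21) from (M26), but your sketch of (M26) tacitly needs (M21). The fix is simply to prove (M21) first, by the (M2)/(M8) argument above, and drop (M5) from the picture entirely.
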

\begin{proof}
\begin{enumerate}[{\rm (M1)}]
\setcounter{enumi}{5}
\item 
From $\exists a=\exists a\vee \exists a$ and (M\ref{M4}), we have
that $\forall \exists a=\forall (\exists a\vee \exists a)= \exists
a\vee \forall \exists a$. But from (M\ref{M1}) we know that $
\forall \exists a\leq\exists a $. So, $\forall \exists a=\exists a$.

\item  

From (M\ref{M6}), (M\ref{M2}) and (M\ref{M1}), we can write
$1=\exists a\implica \forall \exists a= \forall (a\implica \forall
\exists a)\leq a\implica \forall \exists a=a\implica \exists a$.
Thus, $a\implica \exists a= 1$.

\item 
From (M\ref{M6}) and (M\ref{M3}) we have that $\forall ( \exists
a\implica b)=  \forall (\forall \exists a\implica b)= \forall
\exists a\implica \forall b=\exists a\implica \forall b$.

\item 
From (M\ref{M6}) and (M\ref{M2}) we have that $\forall (a\implica
\exists b)= \forall (a\implica \forall\exists b)=\exists a\implica
\forall\exists b=\exists a\implica \exists b$.

\item 

From (M\ref{M1}) and (M\ref{M3}), we have that $\forall 1= \forall
(\forall 1\implica 1)= \forall 1\implica \forall 1=1$.

\item 
We know that $\forall a\implica \exists \forall a=1$ by (M\ref{M7}).
Furthermore, $\exists \forall a\implica \forall a= \forall (\forall
a\implica \forall a)= \forall 1=1$ by (M\ref{M2}) and (M\ref{M10}).
Then, $\exists \forall a= \forall a$.

\item 
Using (M\ref{M4}) and (M\ref{M11}), we have that $\forall ( \forall
a\vee b)=\forall ( \exists \forall a\vee b)= \exists \forall a\vee
\forall b= \forall a\vee \forall b$.

\item Clearly $\forall 0=0 $ and $\exists 1=1$ by (M\ref{M1}) and (M\ref{M7}) respectively.

Since
$\forall 0=0$, we have that $\exists 0 \implica 0=\exists 0 \implica
\forall 0 = \forall ( 0 \implica \forall 0) = \forall ( 0 \implica
0)=  \forall 1=1$ by (M\ref{M2}) and (M\ref{M10}). So, $\exists
0=0$.

\item By (M\ref{M7}) we have that $\exists a\implica \exists\exists a=1$. On the other hand, from (M\ref{M9}) and (M\ref{M10}) we have that $\exists \exists a \implica \exists a = \forall(\exists a\implica  \exists a) = \forall 1 =1$. Thus, $\exists\exists a=\exists a$.

By (M\ref{M3}) and (M\ref{M10}), $\forall a \implica \forall \forall
a= \forall(  \forall a \implica \forall a)= \forall 1 = 1$. Since
$\forall \forall a\implica \forall a=1$, then $\forall \forall a=
\forall a$.

\item From (M\ref{M9}) and (M\ref{M14}), we have that $\forall( \exists a \implica \exists b)=  \exists\exists a \implica \exists b = \exists a \implica \exists b$.

 \item Using (M\ref{M15}), (M\ref{M2}), and $\exists a \implica  b \leq \exists a \implica \exists b$, we obtain $\exists(\exists a \implica  b)  \implica ( \exists a \implica \exists b) = \exists(\exists a \implica  b)\implica \forall ( \exists a \implica
\exists b) = \forall ( (\exists a \implica  b) \implica \forall( \exists a \implica \exists b)) = \forall ( (\exists a \implica  b) \implica ( \exists a \implica \exists b)) = \forall 1= 1$.

\item If $a \leq b$, then $\forall a \leq a \leq b \leq \exists b$. Thus, by (M\ref{M3}), $\forall a
\implica \forall b = \forall(\forall a \implica b ) = \forall 1 = 1$, and, by (M\ref{M9}), $\exists a \implica \exists b = \forall
(a\implica \exists b) = \forall 1 = 1$.

\item Using (M\ref{M4}) and (M\ref{M6}), $\forall (\exists a \vee \exists b)=  \exists a \vee \forall \exists b = \exists a \vee \exists b$.

\item If $a = \forall a$, then $\exists a = \exists \forall a=  \forall a=
a$ by (M\ref{M11}). The converse implication follows analogously using (M\ref{M6}).

\item Clearly $a\vee b \leq \exists a \vee \exists
b$. Thus $\exists (a\vee b) \leq \exists(\exists a \vee \exists b)$,
by (M\ref{M17}). But $\exists a \vee \exists b= \forall(\exists a
\vee \exists b) $ by (M\ref{M18}). Then, taking (M\ref{M19}) into
account, $\exists (\exists a \vee \exists b) =  \exists a \vee
\exists b$. Therefore, $\exists (a\vee b) \leq \exists a \vee
\exists b$.

On the other hand, from $\exists a \leq \exists (a\vee b)$ and
$\exists b \leq \exists (a\vee b)$, it is clear that $\exists a \vee
\exists b \leq \exists (a\vee b)$.

\item Taking into account that $\exists a * \exists b \geq \forall(\exists a * \exists
b)$, (M\ref{M2}) and (M\ref{M8}), we have that $\exists(\exists a * \exists b)
\implica (\exists a * \exists b) \geq \exists(\exists a * \exists b)
\implica \forall(\exists a * \exists b)= \forall((\exists a *
\exists b) \implica \forall(\exists a * \exists b)) = \forall (\exists a \implica( \exists b \implica
\forall(\exists a * \exists b))) = \forall (\exists a \implica
\forall( \exists b \implica (\exists a * \exists b))) =
\forall\forall (\exists a \implica ( \exists b \implica (\exists a
* \exists b))) = \forall \forall 1 = 1$.

\item From $\forall a\leq a$, we have that $a\implica b\leq \forall a\implica b$. Then, using (M\ref{M17}) and (M\ref{M3}), $\forall (a\implica b)\leq \forall (\forall a\implica b) = \forall a\implica \forall b$.

\item Since $b\leq \exists b$, we have that $a\implica b\leq a\implica \exists b$. Thus, $\forall (a\implica b)\leq \forall (a\implica \exists b)$. Then, from (M\ref{M9}), we have that $\forall (a\implica b)\implica (\exists a\implica \exists b)= \forall (a\implica b)\implica \forall( a\implica \exists b)=1$.

\item From (M\ref{M23}), (M\ref{M22}) and (M\ref{M10}), we have that $(\forall a*\exists b)\implica \exists (a*b) = \forall a\implica (\exists b\implica \exists (a*b)) \geq \forall a\implica \forall(b\implica (a*b)) \geq \forall (a\implica (b\implica (a*b))) = \forall 1 = 1$.

\item Using (M\ref{M7}), $(\forall a*\forall b)\implica \exists (a*b)\geq (\forall a*\forall b)\implica  (a*b)=1$.

\item Since $a\leq \exists a$, then $a*\exists b\leq \exists a*\exists b$. Consequently, $\exists (a*\exists b)\leq \exists (\exists a*\exists b)=\exists a*\exists b$ from (M\ref{M21}). On the other hand, from (M\ref{M9}) and (M\ref{M8}), we have that $(\exists a* \exists b)\implica \exists (a*\exists b)= \exists b\implica (\exists a\implica \exists (a*\exists b))=\exists b\implica \forall (a\implica \exists (a*\exists b))= \forall (\exists b\implica (a\implica \exists (a*\exists b)))=\forall ( (a*\exists b)\implica \exists (a*\exists b))= \forall 1=1$.

\item Using (M\ref{M11}) and (M\ref{M26}), we have that $\exists(a * \forall b) = \exists (a * \exists \forall b) = \exists a * \exists \forall b = \exists a *  \forall b$.

\item From (M\ref{M2}), $\exists(a\implica \exists b) \implica (\forall a\implica \exists b)=\exists(a\implica \exists b) \implica \forall(\forall a\implica \exists b)=\forall( (a\implica \exists b) \implica \forall(\forall a\implica \exists b))=\forall( (a\implica \exists b) \implica (\forall a\implica \exists b)) = \forall 1 = 1$.

\item Clearly $\exists a\implica \exists b\leq \exists (\exists a\implica \exists b)$. Using (M\ref{M28}) and (M\ref{M6}), $\exists(\exists a \implica \exists b) \leq \forall \exists a \implica \exists b = \exists a \implica \exists b$.

\item Using (M\ref{M11}) and (M\ref{M29}), $\exists(\forall a \implica \forall b) = \exists(\exists \forall a \implica \exists \forall b) = \exists \forall a \implica \exists \forall b = \forall a \implica \forall b$.

\item From (M\ref{M7}) we have that $\exists ( \exists a \wedge \exists b)\geq\exists a\wedge \exists b$. Since $\exists a\wedge \exists b\leq \exists a$ and $\exists a\wedge \exists b\leq \exists b$, then $\exists (\exists a\wedge \exists b)\leq \exists\exists a=\exists a$ and $\exists(\exists a\wedge \exists b)\leq \exists \exists b=\exists b$. Thus, $\exists (\exists a\wedge \exists b)\leq \exists a\wedge \exists b$.

\item We know that $a\wedge \exists b\leq \exists a \wedge \exists b$, then $\exists (a\wedge \exists b)\leq \exists (\exists a \wedge \exists b)=\exists a \wedge \exists b$ by (M\ref{M31}). On the other hand, $(\exists a \wedge \exists b)\implica \exists (a\wedge \exists b)= (\exists a*(\exists a\implica \exists b))\implica \exists (a* (a\implica \exists b))= (\exists a*\forall( a\implica \exists b))\implica \exists (a* (a\implica \exists b))= 1$ by (M\ref{M9}) and (M\ref{M24}).

\item From (M\ref{M3}), $\forall (\forall a\implica \forall b )= \forall a\implica \forall \forall b = \forall a\implica \forall b$.

\item Using (M\ref{M11}) and (M\ref{M21}), we have that $\exists (\forall a*\forall b)=\exists (\exists \forall a*\exists\forall b)=\exists \forall a*\exists\forall b= \forall a*\forall b  $.

\item Using (M\ref{M34}) and (M\ref{M6}), we have that $\forall a*\forall b=\exists(\forall a*\forall b)=\forall (\exists(\forall a*\forall b))= \forall(\forall a*\forall b)$.

\item  Using (M\ref{M33}) and (M\ref{M35}), we have that $\forall (\forall a \wedge\forall b )=\forall (\forall a*(\forall a\implica \forall b)) =\forall (\forall a*\forall(\forall a\implica \forall b))=\forall a*\forall(\forall a\implica \forall b)=\forall a*(\forall a\implica \forall b)=\forall a\wedge \forall b$.

\item From (M\ref{M36}), $(\forall a\wedge \forall b)\implica \forall (a \wedge b )= \forall (\forall a\wedge \forall b)\implica \forall (a \wedge b )= \forall ((\forall a\wedge \forall b)\implica (a \wedge b ))=\forall 1=1$. Then,  $\forall a\wedge \forall b\leq  \forall (a \wedge b )$. Since $a\wedge b\leq a$ and $a\wedge b\leq b$, we have $ \forall (a \wedge b )\leq \forall a\wedge \forall b$. \qedhere
\end{enumerate}
\end{proof}

\begin{obs} \rm
Observe that from (M\ref{M6}) and (M\ref{M11}), $x = \forall y$ for some $y$ if and only if $x = \exists z$ for some $z$. Thus $\exists A = \{\exists a: a \in A\} = \{\forall a: a \in A\} = \forall A$. Using this fact, from (M\ref{M2}), (M\ref{M3}) and (M\ref{M4}), we deduce that:
\begin{itemize}
\item $\forall (a \implica c) = \exists a \implica c$,
\item $\forall (c \implica a) = c \implica \forall a$,
\item $\forall (c \vee a)= c \vee \forall a$,
\end{itemize} 
for any $a \in A$ and $c \in \forall A$. Compare with the axioms of the logic $S5(\mathcal C)$ in Section 4.
\end{obs}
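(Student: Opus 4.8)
The plan is to first establish the set-theoretic identity $\exists A = \forall A$ together with the fact that every element of this common range is a fixed point of both quantifiers, and then to read off the three displayed identities directly from (M\ref{M2}), (M\ref{M3}) and (M\ref{M4}) by a single rewriting of the distinguished element $c$.

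First I would prove the equivalence ``$x = \forall y$ for some $y$ iff $x = \exists z$ for some $z$''. If $x = \forall y$, then (M\ref{M11}) gives $x = \forall y = \exists \forall y$, so $x$ lies in the range of $\exists$; conversely, if $x = \exists z$, then (M\ref{M6}) gives $x = \exists z = \forall \exists z$, so $x$ lies in the range of $\forall$. This immediately yields $\exists A = \{\exists a : a \in A\} = \{\forall a : a \in A\} = \forall A$; call this common set $A_0$. Moreover, for $c \in A_0$, writing $c = \forall y$, axiom/property (M\ref{M14}) gives $\forall c = \forall \forall y = \forall y = c$, and (M\ref{M11}) (or (M\ref{M19})) gives $\exists c = \exists \forall y = \forall y = c$; hence every $c \in A_0$ satisfies $\forall c = c = \exists c$.

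With this in hand, each of the three identities follows by a one-line substitution, for arbitrary $a \in A$ and $c \in A_0$. For the first, using $c = \forall c$ and (M\ref{M2}): $\forall(a \implica c) = \forall(a \implica \forall c) = \exists a \implica \forall c = \exists a \implica c$. For the second, using $c = \forall c$ and (M\ref{M3}): $\forall(c \implica a) = \forall(\forall c \implica a) = \forall c \implica \forall a = c \implica \forall a$. For the third, using $c = \exists c$ and (M\ref{M4}): $\forall(c \vee a) = \forall(\exists c \vee a) = \exists c \vee \forall a = c \vee \forall a$.

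There is no genuine obstacle here; the only point requiring care is to invoke the right representation of $c$ --- namely $c = \forall(\,\cdot\,)$ when applying (M\ref{M2}) and (M\ref{M3}), whose statements carry a $\forall$ on the antecedent or consequent, and $c = \exists(\,\cdot\,)$ when applying (M\ref{M4}), whose statement carries an $\exists$ on the first disjunct. The equality $\exists A = \forall A$ is precisely what makes both representations simultaneously available.
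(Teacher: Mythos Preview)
Your proposal is correct and matches the paper's approach exactly: the remark in the paper merely indicates that $\exists A = \forall A$ follows from (M\ref{M6}) and (M\ref{M11}) and that the three identities are then immediate from (M\ref{M2}), (M\ref{M3}) and (M\ref{M4}), and your write-up fills in precisely these details, including the choice of writing $c$ as $\forall c$ or $\exists c$ according to which axiom is being invoked.
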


\begin{obs}
Note that the identity $\forall (x * x) \approx \forall x * \forall x$ holds in every monadic MV-algebra and trivially in any monadic Gödel algebra (cf. Section 5). However, this equation is not valid in any monadic BL-algebra. For example, let $\mathbf{L}_2$ and $\mathbf{L}_3$ be the 2-element and 3-element MV-chains, respectively, and consider the ordinal sum $\mathbf{A} = \mathbf{L}_3 \oplus \mathbf{L}_2$. If we define the quantifiers on $A$ in the following way:
\begin{center}
\begin{tabular}{|c|c|c|c|c|}
  \hline
  $a$         & $0_1$ & $\frac{1}{2}$ & $0_2$         & $1$ \\ \hline
  $\forall a$ & $0_1$ & $\frac{1}{2}$ & $\frac{1}{2}$ & $1$ \\
  $\exists a$ & $0_1$ & $\frac{1}{2}$ & $1$           & $1$ \\
  \hline
\end{tabular}
\end{center}
we get a monadic BL-chain $\langle \mathbf{A}, \exists, \forall\rangle$. In this algebra $\forall 0_2*\forall 0_2 = \frac{1}{2} * \frac{1}{2} = 0_1$, but $\forall (0_2*0_2)= \forall 0_2=\frac{1}{2}$.
\end{obs}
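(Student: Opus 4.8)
To establish the assertion of the Remark it suffices to check that the algebra $\langle\mathbf A,\exists,\forall\rangle$ displayed above is an MBL-algebra: being linearly ordered it is then a monadic BL-chain, and the displayed computation $\forall 0_2*\forall 0_2=\frac12*\frac12=0_1\neq\frac12=\forall 0_2=\forall(0_2*0_2)$ shows that $\forall(x*x)\approx\forall x*\forall x$ is not a valid identity in $\mathbb{MBL}$. Since $\mathbf L_2$ and $\mathbf L_3$ are MV-chains, their ordinal sum $\mathbf A$ is a BL-chain, so the only thing left is to verify that the tabulated maps $\forall$ and $\exists$ satisfy (M\ref{M1})--(M\ref{M5}).

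First I would record the arithmetic of $\mathbf A$ on the chain $0_1<\frac12<0_2<1$: on $\{0_1,\frac12,0_2\}$ the operations are those of $\mathbf L_3$ (so $\frac12*\frac12=0_1$ and $\frac12\implica 0_1=\frac12$), on $\{0_2,1\}$ those of $\mathbf L_2$, and in the mixed cases $a*b=a\wedge b$ and $a\implica b\in\{1,b\}$ by the usual ordinal-sum rules (e.g.\ $\frac12*1=\frac12$, $0_2*1=0_2$, $1\implica\frac12=\frac12$, $0_2\implica 0_1=0_1$). The crucial structural fact is that $\forall A=\exists A=\{0_1,\frac12,1\}$, which is a subalgebra of $\mathbf A$ (in fact a copy of $\mathbf L_3$), that $\forall$ and $\exists$ restrict to the identity on it, and that $\exists a$ is the least element of this subchain lying above $a$ while $\forall a$ is the greatest lying below $a$ (the only nontrivial argument being $a=0_2$, with $\exists 0_2=1$ and $\forall 0_2=\frac12$). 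In particular $\forall a\leq a$, so (M\ref{M1}) holds, and $\forall,\exists$ are idempotent.

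It remains to verify (M\ref{M2}), (M\ref{M3}), (M\ref{M4}), (M\ref{M5}). Because $\forall A=\exists A=\{0_1,\frac12,1\}$, the first three reduce, for this algebra, to checking the three identities of the Remark following \lemref{properties MBL} — namely $\forall(a\implica c)=\exists a\implica c$, $\forall(c\implica a)=c\implica\forall a$ and $\forall(c\vee a)=c\vee\forall a$ — for every $a\in A$ and every $c\in\{0_1,\frac12,1\}$; indeed, substituting $c=\forall y$ (resp.\ $c=\forall x$, $c=\exists x$) into these recovers (M\ref{M2})--(M\ref{M4}). For $c=0_1$ and $c=1$ they are immediate from $\forall 0=0$ and $\forall 1=1$; the only substantial case is $c=\frac12$, which amounts to a short finite check on the four-element chain, the one slightly delicate instance being $\forall(\frac12\vee 0_2)=\forall 0_2=\frac12=\frac12\vee\frac12=\frac12\vee\forall 0_2$ (this is exactly where $\forall 0_2=\frac12$, rather than $0_1$, is needed). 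Finally (M\ref{M5}) reads $\exists(a*a)=\exists a*\exists a$, and since $a\in\{0_1,\frac12,1\}$ makes both sides equal $a$, the only remaining case is $a=0_2$, where $\exists(0_2*0_2)=\exists 0_2=1=1*1=\exists 0_2*\exists 0_2$.

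The argument is entirely computational; the only thing to be careful about is getting the ordinal-sum operations right in the mixed cases and organizing the (small) case analysis for (M\ref{M2})--(M\ref{M4}) at $c=\frac12$. There is no conceptual obstacle once one notices that $\{0_1,\frac12,1\}$ is a subalgebra on which $\exists$ and $\forall$ act as the associated closure and interior operators. (One could instead appeal to the general recipe for quantifiers on a BL-chain developed in Section~6, but for this $4$-element example the direct verification is shorter.)
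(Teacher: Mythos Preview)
Your verification is correct and is precisely what the paper leaves to the reader: the Remark itself offers no argument beyond displaying the table and the failing instance $\forall 0_2*\forall 0_2\ne\forall(0_2*0_2)$.

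Two small slips worth flagging, neither of which affects the outcome. First, in the ordinal-sum convention the paper uses (all summands share the top $1$; cf.\ Section~6), the $\mathbf L_3$-component of $\mathbf A$ is $\{0_1,\frac12,1\}$, not $\{0_1,\frac12,0_2\}$; the element $0_2$ is the non-top element of the $\mathbf L_2$-component. Your own listing of $0_2\implica 0_1=0_1$ among the ``mixed cases'' shows you are in fact computing with the right picture. Second, in your check of (M5) for $a\in\{0_1,\frac12,1\}$ both sides equal $a*a$, not $a$ (e.g.\ for $a=\frac12$ both sides are $0_1$); since $\forall A$ is closed under $*$ this still gives the identity.

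As you note at the end, the tidiest route is to observe that $C=\{0_1,\frac12,1\}$ is an $m$-relatively complete subalgebra of $\mathbf A$ --- conditions (s1), (s2$_\ell''$) and (s3) are immediate on this four-element chain --- and invoke Theorem~\ref{TEO: caracterizacion de la imagen del cuantificador}; this replaces the case analysis for (M2)--(M4) with a single structural check.
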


\begin{lem} \label{LEMA: para todo A es subalgebra}
If $\mathbf A \in \mathbb{MBL}$, then $\exists A=\forall A$ and
$\exists \mathbf{A}$ is a BL-subalgebra of $\mathbf{A}$.
\end{lem}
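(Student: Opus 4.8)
The plan is as follows. The equality $\exists A = \forall A$ has essentially already been observed in the Remark preceding this lemma: by (M\ref{M6}) every element of the form $\exists a$ satisfies $\forall \exists a = \exists a$, so $\exists A \subseteq \forall A$, and dually by (M\ref{M11}) every $\forall a$ satisfies $\exists \forall a = \forall a$, so $\forall A \subseteq \exists A$; hence the two sets coincide.

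For the second assertion I would show that the subset $\exists A$ is closed under all the operations of the BL-type and contains the two constants; this suffices, because $\exists \mathbf A$ equipped with the restricted operations is then a subalgebra of the BL-reduct of $\mathbf A$, and BL-algebras form an equational class, so $\exists \mathbf A$ is itself a BL-algebra. The required closure conditions are each a direct consequence of Lemma~\ref{properties MBL}: $0 = \exists 0$ and $1 = \exists 1$ lie in $\exists A$ by (M\ref{M13}); closure under $*$ is exactly (M\ref{M21}), since $\exists a * \exists b = \exists(\exists a * \exists b)$; closure under $\implica$ is (M\ref{M29}); closure under $\wedge$ is (M\ref{M31}); and closure under $\vee$ follows from (M\ref{M20}), or alternatively from (M\ref{M18}) together with the fixed-point equivalence (M\ref{M19}).

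Since all the arithmetic has already been carried out in Lemma~\ref{properties MBL}, there is no real obstacle here; the proof is just a matter of assembling the relevant identities. The only point requiring a little care is to make the checklist complete — one must verify closure under every basic operation in the similarity type of BL-algebras (not merely under $\vee$ and $*$) together with membership of $0$ and $1$ — but every instance needed is already available.
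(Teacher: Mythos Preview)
Your proposal is correct and matches the paper's own proof essentially line for line: the equality $\exists A=\forall A$ is obtained from (M\ref{M6}) and (M\ref{M11}) exactly as in the preceding Remark, and the subalgebra claim is derived by citing the same closure properties from Lemma~\ref{properties MBL} (constants via (M\ref{M13}), and closure under $\vee,\wedge,*,\implica$ via (M\ref{M20}), (M\ref{M31}), (M\ref{M21}), (M\ref{M29})). There is nothing to add.
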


\begin{proof}
We already showed that $\forall A = \exists A$. From (M\ref{M7}), (M\ref{M13}), (M\ref{M20}), (M\ref{M31}),
(M\ref{M21}) and (M\ref{M29}), we obtain that $\exists \mathbf{A}$ is a BL-subalgebra of $\mathbf{A}$.
\end{proof}

Given a monadic BL-algebra $\mathbf{A}$, a subset $F \subseteq A$ is a {\em monadic filter} of $\mathbf{A}$ if $F$ is a filter and $\forall a \in F$ for each $a \in F$. In the following, we characterize the congruences of each MBL-algebra $\mathbf{A}$ by means of monadic filters in the standard way. More precisely, we establish an order isomorphism from the lattice $\Conn_\mathbb{MBL}(\mathbf{A})$ of congruences of $\mathbf{A}$ onto the lattice $\mathbf{F}_m(\mathbf{A})$ of monadic filters of $\mathbf{A}$, both ordered by inclusion. Moreover, we prove that the lattice $\mathbf{F}_m(\mathbf{A}) $ is isomorphic to the lattice $\mathbf{F}(\exists \mathbf{A})$ of filters of the BL-algebra $\exists \mathbf{A}$.

If $\mathbf{A} \in \mathbb{MBL}$ and $X \subseteq A$, $X \neq \emptyset$, the monadic filter generated by $X$ is the set
\begin{equation*}
\begin{split}
\FMg(X) & =\left\{a\in A: \forall x_1\implica(\forall x_2\implica(\cdots (\forall x_n\implica a)\cdots ))=1, \text{ where } x_1,x_2,\dots,x_n\in X \right\} \\
& = \left\{a\in A: \forall x_1 * \forall x_2 * \cdots *
\forall x_n\leq a,\text{ where  } x_1,x_2,\dots,x_n\in  X \right\}.
\end{split}
\end{equation*}
In particular, if $X = \{b\}$ then
$\FMg(\{b\}) = \FMg(b) = \left\{a \in A: (\forall b)^n \leq a \text{ for some } n \in \mathbb{N}\right\}$. Let us observe also
that $\FMg(X)=\Fg(\forall X)$, the filter generated by $\forall X$.

\begin{thm} Let $\mathbf{A}\in \mathbb{MBL}$. The correspondence  $\mathbf{Con}_\mathbb{MBL}(\mathbf{A})\to \mathbf{F}_m(\mathbf{A})$ defined by $\theta \mapsto 1/\theta$ is an order isomorphism whose inverse is given by $F \mapsto \theta_F=\left\{ (a,b)\in A^2: (a\implica b) *  (b\implica a) \in F\right\}$.
\end{thm}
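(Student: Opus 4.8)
The plan is to follow the standard template for establishing a congruence/filter correspondence, which already works for BL-algebras, and to check only the extra conditions that the monadic operators impose. There are three things to verify: (i) for every congruence $\theta \in \mathbf{Con}_{\mathbb{MBL}}(\mathbf{A})$, the class $1/\theta$ is a monadic filter; (ii) for every monadic filter $F$, the relation $\theta_F$ is an $\mathbb{MBL}$-congruence; (iii) the two assignments are mutually inverse and order-preserving, hence an order isomorphism of lattices.

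For (i): since $\theta$ restricts to a BL-congruence, $1/\theta$ is already a BL-filter by the known theory of BL-algebras (recalled from \cite{Hajek98libro}); the only new point is closure under $\forall$. If $a \in 1/\theta$, i.e. $(a,1) \in \theta$, then applying the operation $\forall$ to both sides gives $(\forall a, \forall 1) \in \theta$, and $\forall 1 = 1$ by (M\ref{M10}), so $\forall a \in 1/\theta$. For (ii): one again invokes the BL-algebra theory to get that $\theta_F$ is a BL-congruence whenever $F$ is a BL-filter, which handles compatibility with $\vee,\wedge,*,\implica$ and the fact that $\theta_F$ is an equivalence relation. The genuinely new work is compatibility with $\exists$ and $\forall$: assuming $(a,b) \in \theta_F$, i.e. $(a\implica b)*(b\implica a) \in F$, one must show $(\forall a, \forall b) \in \theta_F$ and $(\exists a, \exists b) \in \theta_F$. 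I would do this by showing that $(a \implica b)*(b\implica a) \leq (\forall a \implica \forall b)*(\forall b \implica \forall a)$ and similarly for $\exists$; upward closure of the filter $F$ then finishes it. The inequality $(a\implica b) \leq (\forall a \implica \forall b)$ is exactly (M\ref{M22}) up to rewriting, and $(a\implica b) \leq (\exists a \implica \exists b)$ is (M\ref{M23}); combining the two one-sided estimates via monotonicity of $*$ and of $\implica$ gives what is needed. (Strictly, one first notes $(a\implica b)*(b\implica a) \leq (a \implica b)$ and $\leq (b\implica a)$, applies (M\ref{M22})/(M\ref{M23}) to each factor, and multiplies.)

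For (iii), I would verify $1/\theta_F = F$ and $\theta_{1/\theta} = \theta$. The first is immediate: $a \in 1/\theta_F$ iff $(a\implica 1)*(1 \implica a) = 1*a = a \in F$. The second is the standard BL-algebra fact that a congruence is determined by its $1$-class, valid here because $\mathbb{MBL}$ still has the congruence property that $(a,b) \in \theta$ iff $(a\implica b)*(b\implica a) \in 1/\theta$ — this follows since $(a,b)\in\theta$ implies $((a\implica b)*(b\implica a),1)\in\theta$ by applying term operations, and conversely if $(a\implica b)*(b\implica a) \in 1/\theta$ then, reasoning in $\mathbf A/\theta$, the element $(a\implica b)*(b\implica a)$ equals $1$, forcing $a = b$ in the quotient by the fact that in a BL-algebra $x = y$ iff $(x\implica y)*(y\implica x) = 1$. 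Order preservation in both directions is clear from the definitions ($\theta \subseteq \theta'$ gives $1/\theta \subseteq 1/\theta'$, and $F \subseteq F'$ gives $\theta_F \subseteq \theta_{F'}$), so the bijection is an order isomorphism.

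The main obstacle is really just the compatibility of $\theta_F$ with the quantifiers in step (ii); everything else is either cited from BL-algebra theory or a one-line check. Once (M\ref{M22}) and (M\ref{M23}) are brought to bear, though, even that step is short. (The companion isomorphism $\mathbf{F}_m(\mathbf{A}) \cong \mathbf{F}(\exists\mathbf{A})$ mentioned before the theorem is presumably treated separately, using $\FMg(X) = \Fg(\forall X)$ together with the fact that $F \mapsto F \cap \exists A$ and $G \mapsto \FMg(G)$ are mutually inverse; it is not part of the statement to be proved here.)
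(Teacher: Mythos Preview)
Your treatment of (i) and (iii) matches the paper's, but there is a genuine gap in step (ii). You claim that ``$(a\implica b) \leq (\forall a \implica \forall b)$ is exactly (M\ref{M22}) up to rewriting,'' and likewise for (M\ref{M23}); this is not so. Property (M\ref{M22}) states $\forall(a\implica b) \leq \forall a \implica \forall b$, with a $\forall$ on the left, and the unquantified inequality $(a\implica b) \leq (\forall a \implica \forall b)$ is in fact false in general. For instance, on the three-element G\"odel chain $\{0,c,1\}$ with $\forall c = 0$, $\exists c = 1$, take $a = 1$, $b = c$: then $a \implica b = c$ while $\forall a \implica \forall b = 1 \implica 0 = 0$, and $c \not\leq 0$. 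So the direct inequality route you propose does not go through, and no amount of ``monotonicity of $*$'' will repair it.

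The missing ingredient is precisely the \emph{monadic} part of the filter hypothesis, which your argument for (ii) never invokes. The paper's proof proceeds as follows: from $(a\implica b)*(b\implica a)\in F$ one gets $a\implica b\in F$ and $b\implica a\in F$; since $F$ is a monadic filter, $\forall(a\implica b)\in F$ and $\forall(b\implica a)\in F$; now (M\ref{M22}) and upward closure give $\forall a \implica \forall b\in F$ and $\forall b \implica \forall a\in F$, hence their product is in $F$. The argument for $\exists$ is identical using (M\ref{M23}). Once you insert the step ``$F$ closed under $\forall$,'' your outline becomes correct and coincides with the paper's proof.
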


\begin{proof}
Let $\theta \in \Con_{\mathbb{MBL}}(\mathbf{A})$. Let us consider the
filter $1/\theta$ and let $a \in 1/\theta$, that is, $a \equiv
1 \pmod{\theta}$. Since $\theta $ is a congruence, we have that
$\forall a \equiv \forall 1 \pmod{\theta}$ and from here we
clearly obtain that $\forall a \in 1/\theta$. Consequently,
$1/\theta$ is a monadic filter. Let $F \in
F_{m}(\mathbf{A})$. Let us prove that $\theta_F \in
\Con_{\mathbb{MBL}}(\mathbf{A})$. Indeed, let $a,b\in A$ such that
$(a \implica b) * (b \implica a) \in F$. Then, $a \implica b \in F$
and $b \implica a \in F$. So $\forall(a \implica b) \in F$ and
$\forall(b \implica a) \in F$. Since $F$ is increasing and from
(M\ref{M22}), we obtain that $\forall a \implica \forall b \in F$ and
$\forall b \implica \forall a \in F$. Thus, $(\forall a \implica \forall
b)*(\forall b \implica \forall a) \in F$. Furthermore, from
(M\ref{M23}) and considering again that $\forall(a \implica b) \in
F$, $\forall(b \implica a) \in F$ and $F$ is increasing, we have
that $(\exists a \implica \exists b) * (\exists b \implica \exists a) \in
F$. Thus, $\theta_F \in \Con_{\mathbb{MBL}}(\mathbf{A})$.
Now, it is straigthforward to see that the correspondence $\theta \mapsto
1/\theta$ is an order isomorphism whose inverse is given by $F \mapsto
\theta_F$.
\end{proof}

The following theorem is also routine.

\begin{thm}
Let $\mathbf{A} \in \mathbb{MBL}$. The correspondence $\mathbf{F}_{m}(\mathbf{A}) \to \mathbf{F}(\exists \mathbf{A})$ defined by $F \mapsto F \cap \exists A$ is an order isomorphism whose inverse is given by $M \mapsto \FMg(M)$.
\end{thm}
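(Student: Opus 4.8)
The plan is to show that the two prescribed maps are well defined and mutually inverse; since both are plainly inclusion-preserving, this yields the order isomorphism.

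First I would check well-definedness. If $F$ is a monadic filter of $\mathbf{A}$, then $F \cap \exists A$ is a filter of the BL-algebra $\exists \mathbf{A}$: it contains $1$; it is closed under $*$ because $F$ is and $\exists \mathbf{A}$ is a BL-subalgebra of $\mathbf{A}$ by \lemref{LEMA: para todo A es subalgebra}; and it is upward closed inside $\exists A$ because $F$ is upward closed in $\mathbf{A}$. Conversely, if $M$ is a filter of $\exists \mathbf{A}$ (necessarily nonempty, since $1 \in M$), then $\FMg(M)$ is a monadic filter of $\mathbf{A}$ by the very definition of the operator $\FMg$.

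Next I would prove that the two composites are the identity. For a filter $M$ of $\exists \mathbf{A}$, use the identity $\FMg(M) = \Fg(\forall M)$ noted before the theorem. Every $m \in M$ lies in $\exists A = \forall A$, so $\forall m = m$ (write $m = \exists c$ and apply (M\ref{M6})); hence $\forall M = M$ and $\FMg(M) = \Fg(M)$, the filter of $\mathbf{A}$ generated by $M$. Therefore $a \in \FMg(M) \cap \exists A$ means $a \in \exists A$ and $m_1 * \cdots * m_n \le a$ for some $m_1, \dots, m_n \in M$; since $M$ is closed under $*$ and upward closed in $\exists \mathbf{A}$, this forces $a \in M$. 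The reverse inclusion $M \subseteq \FMg(M) \cap \exists A$ is immediate. For a monadic filter $F$, note first that $\FMg(F) = F$: indeed $\FMg(F) = \Fg(\forall F) \subseteq \Fg(F) = F$ because $\forall F \subseteq F$, while $F \subseteq \FMg(F)$ since each $a \in F$ satisfies $\forall a \le a$ with $\forall a \in \forall F$. Hence $\FMg(F \cap \exists A) \subseteq \FMg(F) = F$. For the opposite inclusion, given $a \in F$ we have $\forall a \in F$ (monadicity) and $\forall a \in \forall A = \exists A$, so $\forall a \in F \cap \exists A \subseteq \FMg(F \cap \exists A)$; since the latter is upward closed and $\forall a \le a$ by (M\ref{M1}), we conclude $a \in \FMg(F \cap \exists A)$.

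The only place demanding a little attention — and hence the main (mild) obstacle — is the inclusion $\FMg(M) \cap \exists A \subseteq M$: one must use that $\forall$ restricts to the identity on $\exists A$ in order to replace a bound $\forall m_1 * \cdots * \forall m_n \le a$ by one whose factors $m_i$ already lie in $M$, and then invoke that $M$ is a genuine filter of $\exists \mathbf{A}$. Everything else is routine bookkeeping about filters.
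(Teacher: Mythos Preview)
Your proof is correct and complete. The paper does not actually give a proof of this theorem --- it simply declares the result ``routine'' --- so your argument supplies exactly the standard verification one would expect: well-definedness of both maps, and the two composite identities, using that $\forall$ is the identity on $\exists A$ (via (M\ref{M6}) or (M\ref{M19})) and that a monadic filter is closed under $\forall$.
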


\begin{cor} \label{cor:iso_filtrosmonadicos_filtrosbl}
If $\mathbf{A} \in \mathbb{MBL}$ then \[ \Conn_{\mathbb{MBL}}(\mathbf{A})\cong \mathbf{F}_{m}(\mathbf{A}) \cong \mathbf{F}(\exists \mathbf{A}) \cong  \Conn_{\mathbb{BL}}(\exists \mathbf{A}). \]
\end{cor}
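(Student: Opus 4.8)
The plan is simply to concatenate three order isomorphisms, two of which are the theorems just proved and the third of which is a classical fact about BL-algebras. For the first link $\Conn_{\mathbb{MBL}}(\mathbf{A})\cong\mathbf{F}_m(\mathbf{A})$ I would invoke the preceding theorem, which exhibits $\theta\mapsto 1/\theta$ as an order isomorphism with inverse $F\mapsto\theta_F$. For the second link $\mathbf{F}_m(\mathbf{A})\cong\mathbf{F}(\exists\mathbf{A})$ I would invoke the theorem immediately above it, which gives the order isomorphism $F\mapsto F\cap\exists A$ with inverse $M\mapsto\FMg(M)$.

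For the remaining link $\mathbf{F}(\exists\mathbf{A})\cong\Conn_{\mathbb{BL}}(\exists\mathbf{A})$ I would appeal to the well-known correspondence between filters and congruences in an arbitrary BL-algebra (see \cite{Hajek98libro}): for a BL-algebra $\mathbf{B}$ the assignment $F\mapsto\{(a,b)\in B^2:(a\implica b)*(b\implica a)\in F\}$ is an order isomorphism of $\mathbf{F}(\mathbf{B})$ onto $\Conn_{\mathbb{BL}}(\mathbf{B})$, with inverse $\theta\mapsto 1/\theta$. By \lemref{LEMA: para todo A es subalgebra}, $\exists\mathbf{A}$ is indeed a BL-algebra, so this applies with $\mathbf{B}=\exists\mathbf{A}$.

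Since a composite of order isomorphisms is again an order isomorphism, chaining the three maps above yields $\Conn_{\mathbb{MBL}}(\mathbf{A})\cong\mathbf{F}_m(\mathbf{A})\cong\mathbf{F}(\exists\mathbf{A})\cong\Conn_{\mathbb{BL}}(\exists\mathbf{A})$, which is the statement. There is essentially no obstacle: all the work has already been done in the two preceding theorems and in the standard filter/congruence theory of BL-algebras, and the corollary is just the observation that these correspondences compose. If desired, one could also record the explicit form of the composite $\Conn_{\mathbb{MBL}}(\mathbf{A})\to\mathbf{F}(\exists\mathbf{A})$, namely $\theta\mapsto(1/\theta)\cap\exists A$, but this is not needed for the proof.
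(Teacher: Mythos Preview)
Your proposal is correct and matches the paper's approach exactly: the paper gives no explicit proof, treating the corollary as an immediate consequence of the two preceding theorems together with the standard filter--congruence correspondence for BL-algebras. Your write-up simply makes that intended chain of isomorphisms explicit.
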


As an immediate consequence, we have the following results.

\begin{cor}
Let $\mathbf{A} \in \mathbb{MBL}$.
\begin{enumerate}[$(1)$]
\item $\mathbf{A}$ is subdirectly irreducible (simple) if and only if $\exists \mathbf{A}$ is a subdirectly irreducible (simple) BL-algebra.
\item If $\mathbf{A}$ is subdirectly irreducible, then $\exists \mathbf{A}$ is totally ordered.
\item $\mathbf{A}$ is a subdirect product of a family of MBL-algebras $\{\mathbf A_i: i \in I\}$ such that $\exists \mathbf A_i$ is totally ordered.
\end{enumerate}
\end{cor}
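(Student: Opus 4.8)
The plan is to deduce all three items from the chain of order isomorphisms
\[
  \Conn_{\mathbb{MBL}}(\mathbf A)\;\cong\;\mathbf F_{m}(\mathbf A)\;\cong\;\mathbf F(\exists\mathbf A)\;\cong\;\Conn_{\mathbb{BL}}(\exists\mathbf A)
\]
supplied by \corref{cor:iso_filtrosmonadicos_filtrosbl}, together with Birkhoff's subdirect representation theorem and the classical structural fact that every subdirectly irreducible BL-algebra is totally ordered.

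\emph{Item (1).} An algebra is subdirectly irreducible exactly when its congruence lattice is nontrivial and the set of congruences distinct from the identity has a least element (the monolith), and it is simple exactly when its congruence lattice is a two-element chain. These are purely lattice-theoretic properties of the congruence lattice, hence preserved by the order isomorphism $\Conn_{\mathbb{MBL}}(\mathbf A)\cong\Conn_{\mathbb{BL}}(\exists\mathbf A)$ of \corref{cor:iso_filtrosmonadicos_filtrosbl}: such an isomorphism matches bottom with bottom and top with top (so $\mathbf A$ is trivial iff $\exists\mathbf A$ is), it restricts to an order isomorphism between the posets of non-identity congruences (so the existence and location of a monolith is transported), and it identifies two-element congruence lattices with two-element congruence lattices. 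Therefore $\mathbf A$ is subdirectly irreducible (resp. simple) if and only if $\exists\mathbf A$ is a subdirectly irreducible (resp. simple) BL-algebra.

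\emph{Item (2).} Suppose $\mathbf A$ is subdirectly irreducible. By (1), $\exists\mathbf A$ is a subdirectly irreducible BL-algebra. Since every BL-algebra is a subdirect product of BL-chains, any subdirectly irreducible BL-algebra embeds into, and hence coincides with, one of the chains occurring in such a representation; thus $\exists\mathbf A$ is a BL-chain, i.e. totally ordered.

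\emph{Item (3).} By Birkhoff's subdirect representation theorem, $\mathbf A$ is isomorphic to a subdirect product of a family $\{\mathbf A_i : i\in I\}$ of subdirectly irreducible quotients of $\mathbf A$, each lying in $\mathbb{MBL}$. By (2), each $\exists\mathbf A_i$ is totally ordered, which is exactly the asserted representation. There is no genuine obstacle in this corollary: once the correspondence between congruences of $\mathbf A$, monadic filters of $\mathbf A$, and filters of $\exists\mathbf A$ has been established, items (1) and (3) are formal, and (2) uses only the standard fact that subdirectly irreducible BL-algebras are chains.
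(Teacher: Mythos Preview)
Your argument is correct and is exactly the approach the paper has in mind: the corollary is stated there without proof as an ``immediate consequence'' of the isomorphism $\Conn_{\mathbb{MBL}}(\mathbf A)\cong\Conn_{\mathbb{BL}}(\exists\mathbf A)$, and you have simply spelled out the routine details (lattice-theoretic nature of subdirect irreducibility and simplicity, Birkhoff's theorem, and the fact that subdirectly irreducible BL-algebras are chains).
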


To close this section of basic properties we will show an extension to monadic BL-algebras of a representation theorem for monadic MV-algebras showed by Rutledge in \cite{Rutledge59}.

\begin{lem}
Let $\mathbf{A}$ be an MBL-algebra and $F$ be a filter in
$\mathbf{A}$. For any $x,y \in A$, $$F = \Fg(F \cup \{x \implica y\})
\cap \Fg(F \cup \{y \implica x\}).$$
\end{lem}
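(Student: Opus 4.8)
The plan is to prove the two inclusions separately. The inclusion $F \subseteq \Fg(F \cup \{x \implica y\}) \cap \Fg(F \cup \{y \implica x\})$ is immediate, since $F \subseteq \Fg(F \cup \{x \implica y\})$ and $F \subseteq \Fg(F \cup \{y \implica x\})$ trivially. So the whole content is in the reverse inclusion: if $a$ belongs to both $\Fg(F \cup \{x \implica y\})$ and $\Fg(F \cup \{y \implica x\})$, then $a \in F$.

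First I would unpack membership in a principal-style extension of a BL-filter. Using the standard description of generated filters in a BL-algebra together with the fact that a filter is upward closed and closed under $*$, there exist $f_1, f_2 \in F$ and $m, n \in \mathbb{N}$ with $f_1 * (x \implica y)^m \leq a$ and $f_2 * (y \implica x)^n \leq a$. Taking $k = \max\{m,n\}$ and replacing $f_1, f_2$ by $f_1 * f_2 \in F$, I can assume a single element $f \in F$ and a single exponent $k$ with $f * (x \implica y)^k \leq a$ and $f * (y \implica x)^k \leq a$. Hence $f * \bigl((x \implica y)^k \vee (y \implica x)^k\bigr) \leq a$, because in a BL-algebra $*$ distributes over $\vee$.

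The key step is then to observe that in any BL-algebra $(x \implica y) \vee (y \implica x) = 1$ (prelinearity), and to promote this to $(x \implica y)^k \vee (y \implica x)^k = 1$. This is where I expect the only real work: it follows from prelinearity by a short induction on $k$, repeatedly using distributivity of $*$ over $\vee$ and the fact that $u \leq u \vee v$, or alternatively from the general lattice identity valid in BL-chains lifted via subdirect representation; either way it is a known arithmetic fact about BL-algebras and I would simply cite it or dispatch it in one line. Once we have $(x \implica y)^k \vee (y \implica x)^k = 1$, the displayed inequality collapses to $f \leq a$, and since $f \in F$ and $F$ is upward closed, $a \in F$. This proves $\Fg(F \cup \{x \implica y\}) \cap \Fg(F \cup \{y \implica x\}) \subseteq F$, completing the argument.

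Note that this lemma lives purely at the BL-algebra level — the monadic structure plays no role — so no appeal to (M1)–(M5) or Lemma~\ref{properties MBL} is needed; only the residuated-lattice arithmetic of BL-algebras (prelinearity and distributivity of $*$ over $\vee$) is used.
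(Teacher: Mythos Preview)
Your proposal is correct and matches the paper's proof essentially line by line: reduce to a single $f\in F$ and a single exponent, then use prelinearity in the form $(x\implica y)^k \vee (y\implica x)^k = 1$ to conclude $f\leq a$. The only cosmetic difference is that the paper passes via residuation to $(x\implica y)^n \vee (y\implica x)^n \leq f\implica z$ and writes the key identity as $(x\implica y)^n \vee (y\implica x)^n = ((x\implica y)\vee(y\implica x))^n = 1$, whereas you use distributivity of $*$ over $\vee$ directly; these are equivalent one-line maneuvers.
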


\begin{proof}
The forward inclusion is straightforward. Now assume $z$ is an
element of both $\Fg(F \cup \{x \implica y\})$ and $\Fg(F \cup \{y \to
x\})$. Then, there are $f_1,f_2 \in F$, $n_1,n_2 \in \mathbb{N}_0$
such that $f_1 * (x \implica y)^{n_1} \leq z$ and $f_2 * (y \implica x)^{n_2}
\leq z$. If we let $f = f_1*f_2$ and $n = \max\{n_1,n_2\}$, it
follows that $f * (x \implica y)^n \leq z$ and $f * (y \implica x)^n \leq z$.
Using the residuation condition, $(x \implica y)^n \leq f \implica z$ and  $(y
\implica x)^n \leq f \implica z$. Thus we get $(x \implica y)^n \vee (y \implica x)^n
\leq f \implica z$. But $(x \implica y)^n \vee (y \implica x)^n = ((x \implica y) \vee
(y \implica x))^n = 1$, so $f \leq z$ and $z \in F$.
\end{proof}

Recall that a filter $F$ in a BL-algebra $\mathbf{A}$ is said to be {\em prime} if for every $a,b \in A$, either $a \implica b \in F$ or $b \implica a \in F$. Observe that $F$ is prime if and only if $\mathbf{A}/F$ is totally ordered.

\begin{lem}
Let $\mathbf{A}$ be an MBL-algebra such that $\exists \mathbf{A}$ is
totally ordered. Given $a \in A$, $a \ne 1$, there exists a prime filter $P$ in $\mathbf{A}$ such that $a \vee \forall r
\not\in P$ for any $r \ne 1$.
\end{lem}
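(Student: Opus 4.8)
The plan is to construct $P$ via a Zorn's Lemma argument, maximizing among filters that avoid all elements of the form $a \vee \forall r$ with $r \ne 1$. First I would set $S = \{a \vee \forall r : r \in A,\ r \ne 1\}$ and consider the family $\mathcal{F}$ of filters $F$ of $\mathbf{A}$ such that $F \cap S = \emptyset$. Note that $\mathcal{F}$ is nonempty: the trivial filter $\{1\}$ belongs to it, since $a \vee \forall r = 1$ would force $\forall r = 1$ (as $\exists \mathbf{A}$ is totally ordered, hence $\forall\mathbf{A}$ is a chain, and in a BL-chain $x \vee y = 1$ iff $x = 1$ or $y = 1$; here $a \ne 1$ so we would need $\forall r = 1$), but then by (M\ref{M19}) or directly $r \ge \forall r = 1$, contradicting $r \ne 1$. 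The key point making $S$ a sensible set to avoid is that $S$ is closed under the binary meet-like operation coming from the lattice structure: I would check that $(a \vee \forall r) \wedge (a \vee \forall s) = a \vee (\forall r \wedge \forall s) = a \vee \forall(r \wedge s)$ using distributivity of the BL-lattice and (M\ref{M37}), and since $r \wedge s \ne 1$ whenever $r \ne 1$ or $s \ne 1$, this lies again in $S$. Since unions of chains in $\mathcal{F}$ are again in $\mathcal{F}$, Zorn gives a maximal element $P \in \mathcal{F}$.

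Next I would show $P$ is prime, i.e.\ for all $x, y \in A$, either $x \implica y \in P$ or $y \implica x \in P$. Suppose not, so both $x \implica y \notin P$ and $y \implica x \notin P$. By maximality of $P$, each of $\Fg(P \cup \{x \implica y\})$ and $\Fg(P \cup \{y \implica x\})$ meets $S$, so there are $r, s \ne 1$ with $a \vee \forall r \in \Fg(P \cup \{x \implica y\})$ and $a \vee \forall s \in \Fg(P \cup \{y \implica x\})$. Then $(a \vee \forall r) \wedge (a \vee \forall s) = a \vee \forall(r \wedge s)$ lies in $\Fg(P \cup \{x \implica y\}) \cap \Fg(P \cup \{y \implica x\})$, which by the preceding lemma equals $P$. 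But $a \vee \forall(r \wedge s) \in S$ since $r \wedge s \ne 1$, contradicting $P \cap S = \emptyset$. Hence $P$ is prime.

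Finally, by construction $P \cap S = \emptyset$ means exactly that $a \vee \forall r \notin P$ for every $r \ne 1$, which is the desired conclusion (and in particular taking $r$ arbitrary shows $a \notin P$ as well, since $a \le a \vee \forall r$ would put $a \vee \forall r$ in $P$ if $a$ were). The main obstacle I anticipate is verifying that $S$ behaves well under $\wedge$ and that it stays away from $1$: this rests on the distributive lattice identity combined with (M\ref{M37}) ($\forall(r \wedge s) = \forall r \wedge \forall s$) and on the chain property of $\exists \mathbf{A} = \forall \mathbf{A}$ to control when a join equals $1$. Once that closure property is in hand, the Zorn argument and the appeal to the previous lemma ($F = \Fg(F \cup \{x\implica y\}) \cap \Fg(F \cup \{y \implica x\})$) are routine.
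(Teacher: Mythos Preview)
Your overall architecture matches the paper's: define $S = \{a \vee \forall r : r \ne 1\}$, apply Zorn to filters disjoint from $S$, and use the preceding lemma to prove primeness. However, two steps in your execution do not go through as written.

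First, your argument that $1 \notin S$ is not valid: you invoke ``in a BL-chain $x \vee y = 1$ iff $x = 1$ or $y = 1$'' to conclude from $a \vee \forall r = 1$ that $\forall r = 1$, but $a$ need not lie in the chain $\forall A$, and $\mathbf{A}$ itself is not assumed to be a chain. The paper fixes this by first applying $\forall$ and (M\ref{M12}) to get $\forall a \vee \forall r = 1$; now both summands are in the chain $\forall A$, so one of them is $1$, forcing $a = 1$ or $r = 1$.

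Second, and more seriously, in the primeness step you claim that $(a \vee \forall r)\wedge(a \vee \forall s)$ lies in $\Fg(P\cup\{x\to y\})\cap\Fg(P\cup\{y\to x\})$. But you only know $a\vee\forall r$ is in the first filter and $a\vee\forall s$ is in the second; the meet of these two elements is \emph{below} each, and filters are upward-closed, so there is no reason the meet lies in either filter, let alone both. Your $\wedge$-closure of $S$ is therefore not what is needed. What actually works is the opposite direction: since $\forall r$ and $\forall s$ are comparable in the chain $\forall A$, say $\forall r \le \forall s$, then $a\vee\forall r \le a\vee\forall s$, so $a\vee\forall s$ lies in the first filter by upward closure and in the second by hypothesis, hence in their intersection $P$, contradicting $P\cap S=\emptyset$. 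This is exactly the paper's argument; replace your meet with the larger of the two (equivalently, the join) and the proof goes through.
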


\begin{proof}
Consider $C = \{a \vee \forall r: r \ne 1\}$. Note that $1 \not\in
C$, since $a \vee \forall r = 1$ implies that $1 = \forall(a \vee
\forall r) = \forall a \vee \forall r$ and this, in turn, would
imply that $a = 1$ or $r = 1$.

Let $\mathcal{F}$ be the family of filters $F$ in
$\mathbf{A}$ such that $F \cap C = \emptyset$. The above paragraph
shows that $\{1\} \in \mathcal{F}$, so that $\mathcal{F}$ is
nonempty. In addition, it is straightfoward to verify that any chain
in $\mathcal{F}$ has an upper bound in $\mathcal{F}$. Hence, by
Zorn's Lemma, there exists a maximal filter $P$ in $\mathcal{F}$.

We claim that $P$ is prime. Indeed, let $x,y \in A$ and note that  $$P = \Fg(P \cup \{x \implica y\}) \cap \Fg(P
\cup \{y \implica x\}).$$ If we assume that neither $\Fg(P \cup \{x \to
y\})$ nor $\Fg(P \cup \{y \implica x\})$ belongs to $\mathcal{F}$, then
there are $r_1, r_2 \ne 1$ such that $a \vee \forall r_1 \in \Fg(P
\cup \{x \implica y\})$ and $a \vee \forall r_2 \in \Fg(P \cup \{y \to
x\})$. Since $\forall r_1$ and $\forall r_2$ are comparable, it
follows that one of them belongs to both filters. Hence one of them
belongs to $P$, a contradiction. This shows that either $ \Fg(P \cup
\{x \implica y\}) \in \mathcal{F}$ or $\Fg(P \cup \{y \implica x\}) \in
\mathcal{F}$.

Assume the first option is true. By the maximality of $P$, $P =
\Fg(P \cup \{x \implica y\})$, so $x \implica y \in P$. Analogously, if $P = \Fg(P \cup \{y \implica x\})$, $y \implica x
\in P$.
\end{proof}

\begin{thm}
Given an MBL-algebra $\mathbf{A}$, there exists a subdirect
representation of the underlying BL-algebra $\mathbf{A} \leq
\prod_{i \in I} \mathbf{A}_i$, where each $\mathbf{A}_i$ is a
totally ordered BL-algebra and $\exists \mathbf{A}$ is embedded in
$\mathbf{A}_i$ via the corresponding projection map.
\end{thm}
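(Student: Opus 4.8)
The plan is to assemble the two preceding lemmas into a single subdirect decomposition of $\mathbf{A}$ indexed by prime filters of its BL-reduct. I would use the last lemma, which needs $\exists\mathbf{A}$ to be totally ordered --- as it must be here, since it is to embed into the chains $\mathbf{A}_i$. I also assume $\mathbf{A}$ is nontrivial, the trivial case being immediate.

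First I would take, for each $a\in A$ with $a\ne 1$, the prime filter $P_a$ of the underlying BL-algebra supplied by the preceding lemma, so that $a\vee\forall r\notin P_a$ for every $r\ne 1$, and set $\mathbf{A}_a:=\mathbf{A}/P_a$ with canonical surjection $\pi_a\colon\mathbf{A}\to\mathbf{A}_a$. Since $P_a$ is prime, $\mathbf{A}_a$ is a BL-chain. Taking $r=0$ and using $\forall 0=0$ (item (M13)) gives $a=a\vee\forall 0\notin P_a$; hence $\bigcap_{a\ne 1}P_a=\{1\}$, and since in any BL-algebra $x=y$ holds exactly when $(x\implica y)*(y\implica x)=1$, the map $h:=(\pi_a)_{a\ne 1}\colon\mathbf{A}\to\prod_{a\ne 1}\mathbf{A}_a$ is injective. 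As each $\pi_a$ is onto, $h$ is a subdirect representation of the underlying BL-algebra.

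It then remains --- and this is the crux --- to show that $\pi_a$ restricted to $\exists A$ is injective for each $a$, i.e. that $\exists\mathbf{A}$ embeds into $\mathbf{A}_a$ via $\pi_a$. The key observation is that the lemma's conclusion also forces $P_a\cap\exists A=\{1\}$: if $e\in\exists A=\forall A$ and $e\ne 1$, then $e=\forall e$ (elements of $\exists A$ are fixed by $\forall$), so by the lemma $a\vee\forall e\notin P_a$, and since $\forall e\le a\vee\forall e$ and $P_a$ is upward closed, $e=\forall e\notin P_a$. Because $\exists\mathbf{A}$ is a BL-subalgebra of $\mathbf{A}$, for $c,d\in\exists A$ the element $(c\implica d)*(d\implica c)$ again lies in $\exists A$; therefore $\pi_a(c)=\pi_a(d)$ forces $(c\implica d)*(d\implica c)\in P_a\cap\exists A=\{1\}$, i.e. $c=d$. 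Thus $\pi_a|_{\exists A}$ is a BL-embedding of $\exists\mathbf{A}$ into the chain $\mathbf{A}_a$, as required.

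I do not expect a real obstacle beyond this bookkeeping: the substantive work --- the Zorn's Lemma construction of a prime filter disjoint from $\{a\vee\forall r:r\ne 1\}$, resting on the identity $F=\Fg(F\cup\{x\implica y\})\cap\Fg(F\cup\{y\implica x\})$ --- is already carried out in the two lemmas above. The one subtlety worth flagging is the double role of the set $\{a\vee\forall r:r\ne 1\}$: keeping it out of $P_a$ simultaneously keeps $a$ out of $P_a$ (which yields the subdirect embedding) and keeps every nontrivial element of $\exists A$ out of $P_a$ (which yields the embedding of $\exists\mathbf{A}$ into each factor), which is exactly what the statement demands.
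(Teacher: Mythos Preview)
Your proof is correct and follows essentially the same approach as the paper: choose for each $a\ne 1$ a prime filter $P_a$ avoiding $\{a\vee\forall r:r\ne 1\}$, obtain the subdirect embedding from $\bigcap P_a=\{1\}$, and verify that each $\pi_a$ is injective on $\exists A$. Your organization of the injectivity step---first isolating $P_a\cap\exists A=\{1\}$ and then invoking that $\exists\mathbf{A}$ is a subalgebra---is slightly cleaner than the paper's direct argument via $\forall r_2\implica\forall r_1$, but the substance is identical, and you are right to flag explicitly the hypothesis that $\exists\mathbf{A}$ is totally ordered (which the paper's proof uses via the lemma without comment).
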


\begin{proof}
For each $a \in A$, $a \ne 1$, let $P_a$ be one of the prime filters provided by the previous lemma. Clearly $\bigcap_{a
\ne 1} P_a = \{1\}$ and we obtain a natural embedding $\mathbf{A}
\to \prod_{a \ne 1} \mathbf{A}/P_a$. To close the proof we need only
show that the natural map $\mathbf{A} \to \mathbf{A}/P_a$ is
injective on $\exists A$. Indeed, suppose there were $r_1,r_2 \in A$
such that $\forall r_1 < \forall r_2$ and $\forall r_1/P_a = \forall
r_2/P_a$. We have that $\forall r_2 \implica \forall r_1 =
\forall(\forall r_2 \implica \forall r_1)$ and $\forall r_2 \implica \forall
r_1  \ne 1$. Hence, we know that $a \vee (\forall r_2 \implica \forall
r_1) \not\in P_a$, which is a contradiction.
\end{proof}

\section{Building MBL-algebras from $m$-relatively complete subalgebras}

In this section we characterize those subalgebras of a given BL-algebra that may be the range of the quantifiers $\forall$ and $\exists$. As a consequence of this characterization, we build the most important examples of monadic BL-algebras, which we call {\em functional monadic BL-algebras}. This construction will allow us in the next section to prove the main result of this article: monadic BL-algebras are the equivalent algebraic semantics of Hájek's monadic fuzzy logic.

Given a BL-algebra $\mathbf{A}$, we say that a subalgebra
$\mathbf{C} \leq \mathbf{A}$ is {\em $m$-relatively complete} if the
following conditions hold:
\begin{enumerate}[(s1)]
\item For every $a \in A$, the subset $\{c \in C: c \leq a\}$ has a greatest element and $\{c \in C: c \geq a\}$ has a least element.

\item For every $a \in A$ and $c_1,c_2 \in C$ such that $c_1 \leq c_2 \vee a$, there exists $c_3 \in C$ such that $c_1 \leq c_2 \vee c_3$ and $c_3 \leq a$.

\item For every $a \in A$ and $c_1 \in C$ such that $a * a \leq c_1$, there exists $c_2 \in C$ such that $a \leq c_2$ and $c_2 * c_2 \leq c_1$.
\end{enumerate}

Condition (s2) may be replaced by either of the following:
\begin{enumerate}
\item[(s2$'$)] For every $a \in A$ and $c_1,c_2 \in C$ such that $c_1 = c_2 \vee a$, there exists $c_3 \in C$ such that $c_1 = c_2 \vee c_3$ and $c_3 \leq a$.
\item[(s2$''$)] If $1 = c_1 \vee a$ for some $c_1 \in C$, $a \in A$, there exists $c_2 \in C$ such that $1 = c_1 \vee c_2$ and $c_2 \leq a$.
\end{enumerate}

Indeed, (s2$'$) is an easy consequence of (s2) and (s2$''$) follows
from (s2$'$) by setting $c_1 = 1$. Now assume (s2$''$). If $c_1 \leq
c_2 \vee a$, it follows that $1 = (c_1 \implica c_2) \vee (c_1 \implica a)$.
Hence, there exists $c_3 \in B$ such that $1 = (c_1 \implica c_2) \vee
c_3$ and $c_3 \leq c_1 \implica a$. Thus $c_1 * c_3 \leq a$ and $c_1 \to
(c_2 \vee (c_1
* c_3)) = (c_1 \implica c_2) \vee (c_1 \implica (c_1 * c_3)) \geq (c_1 \to
c_2) \vee c_3 = 1$.

Furthermore observe that if $\mathbf{C}$ is totally ordered,
condition (s2$''$) may be replaced by the following simpler
equivalent form:
\begin{enumerate}
\item[(s2$_\ell''$)] If $1 = c \vee a$ for some $c \in C$, $a \in A$, then $c = 1$ or $a = 1$.
\end{enumerate}

\begin{thm} \label{TEO: caracterizacion de la imagen del cuantificador}
Given a BL-algebra $\mathbf{A}$ and an $m$-relatively complete
subalgebra $\mathbf{C} \leq \mathbf{A}$, if we define on $A$ the
operations $$\exists
a := \min \{c \in C: c \geq a\}, \qquad \forall a := \max \{c \in C: c \leq a\},$$ then $\langle \mathbf{A}, \exists, \forall\rangle$ is a monadic BL-algebra such that $\forall
A = \exists A = C$. Conversely, if $\mathbf{A}$ is a monadic
BL-algebra, then $\forall \mathbf{A} = \exists \mathbf{A}$ is an $m$-relatively
complete subalgebra of $\mathbf{A}$.
\end{thm}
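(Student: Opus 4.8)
The plan is to prove the two implications separately, the substance lying in the first. Assume $\mathbf{C} \leq \mathbf{A}$ is $m$-relatively complete. I would first note that (s1) makes $\exists$ and $\forall$ well-defined total operations on $A$, since the sets $\{c \in C : c \geq a\}$ and $\{c \in C : c \leq a\}$ are nonempty (containing $1$ and $0$ respectively), and that both operations are clearly order preserving. The equality $\forall A = \exists A = C$ is immediate: $\forall a, \exists a \in C$ by construction, while every $c \in C$ satisfies $\forall c = c = \exists c$. Since the BL-reduct of $\langle \mathbf{A}, \exists, \forall\rangle$ is the original algebra $\mathbf{A}$, it remains only to verify the identities (M1)--(M5).

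Identity (M1) is trivial, as $\forall x \leq x$ by definition. For (M2) and (M3) I would establish each as a pair of inequalities, using only that $C$ is closed under $*$ and $\implica$ together with residuation. For instance, for (M2): from $x \leq \exists x$ one gets $\exists x \implica \forall y \leq x \implica \forall y$, and since $\exists x \implica \forall y \in C$ this yields $\exists x \implica \forall y \leq \forall(x \implica \forall y)$; conversely, writing $d = \forall(x \implica \forall y) \in C$, from $d * x \leq \forall y$ we obtain $x \leq d \implica \forall y$, and since $d \implica \forall y \in C$ this forces $\exists x \leq d \implica \forall y$, i.e.\ $d \leq \exists x \implica \forall y$. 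The argument for (M3) has the same shape, using $\forall x \leq x$ for one inequality and, for the other, that $d * \forall x \in C$ whenever $d = \forall(\forall x \implica y)$.

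Identities (M4) and (M5) are exactly where (s2) and (s3) come in. For (M4), the inequality $\exists x \vee \forall y \leq \forall(\exists x \vee y)$ comes from $\forall y \leq y$ together with $\exists x \vee \forall y \in C$; for the reverse, set $d = \forall(\exists x \vee y) \in C$ and apply (s2) to $d \leq \exists x \vee y$ to obtain $c_3 \in C$ with $d \leq \exists x \vee c_3$ and $c_3 \leq y$, so that $c_3 \leq \forall y$ and therefore $d \leq \exists x \vee c_3 \leq \exists x \vee \forall y$. For (M5), monotonicity of $*$ and $x \leq \exists x$ give $x * x \leq \exists x * \exists x$, and since $\exists x * \exists x \in C$ this yields $\exists(x * x) \leq \exists x * \exists x$; conversely, applying (s3) to $x * x \leq \exists(x * x)$ (with $\exists(x * x) \in C$) produces $c_2 \in C$ with $x \leq c_2$ and $c_2 * c_2 \leq \exists(x * x)$, and then $\exists x \leq c_2$ gives $\exists x * \exists x \leq c_2 * c_2 \leq \exists(x * x)$.

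For the converse, let $\mathbf{A}$ be an MBL-algebra. By \lemref{LEMA: para todo A es subalgebra}, $\mathbf{C} := \exists \mathbf{A} = \forall \mathbf{A}$ is a BL-subalgebra, so it remains only to verify (s1), (s2), (s3) for it. Condition (s1) holds because, by \lemref{properties MBL}, $\forall a$ is the greatest element of $\{c \in \exists A : c \leq a\}$ and $\exists a$ the least element of $\{c \in \exists A : c \geq a\}$; this uses (M1), (M7), monotonicity (M17), and the fact that the elements of $\exists A$ are fixed points of both quantifiers (M14), (M19). For (s2) one takes $c_3 = \forall a$ and applies $\forall$ to $c_1 \leq c_2 \vee a$, invoking (M4) (note that $c_2 = \exists c_2$) and $\forall c_1 = c_1$. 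For (s3) one takes $c_2 = \exists a$ and uses (M5) together with monotonicity. I expect this converse to be routine given the arithmetic already collected in \lemref{properties MBL}; the main effort is in the forward direction, and within it the only genuinely non-formal steps are the appeals to (s2) and (s3) in the verification of (M4) and (M5), which is also where I would expect any difficulty to lie.
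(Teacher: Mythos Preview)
Your proposal is correct and follows essentially the same approach as the paper's proof: both directions are argued identity-by-identity, using residuation and the closure of $C$ for (M2) and (M3), and invoking (s2) and (s3) precisely for (M4) and (M5), while the converse is handled by taking $c_3 = \forall a$ and $c_2 = \exists a$ respectively. The only cosmetic difference is that for (M4) and (M5) the paper verifies directly that $\exists a \vee \forall b$ (resp.\ $\exists a * \exists a$) is the maximum (resp.\ minimum) of the relevant subset of $C$, whereas you prove the two inequalities separately; the content is the same.
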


\begin{proof}
Clearly condition (s1) from the definition of $m$-relatively
complete subalgebra guarantees the existence of $\forall a$ and
$\exists a$ for every $a \in A$. It remains to show that $\langle
\mathbf{A}, \exists, \forall\rangle$ satisfies axioms (M1)--(M5). Let
$a,b \in A$.
\begin{enumerate}[(M1)]
\item From the definition of $\forall a$, it is clear that $\forall
a \leq a$. Thus $\forall a \implica a = 1$.

\item Since, by definition, $a \leq \exists a$, it follows that
$\exists a \implica \forall b \leq a \implica \forall b$. Then $\exists a \to
\forall b \in \{c \in C: c \leq a \implica \forall b\}$. Let us see that
$\exists a \implica \forall b = \max \{c \in C: c \leq a \implica \forall
b\}$. Indeed, if $c \in C$ and $c \leq a \implica \forall b$, then $a
\leq c \implica \forall b$. Then, by definition of $\exists a$, $\exists
a \leq c \implica \forall b$. Thus $c \leq \exists a \implica \forall b$. This
shows that $\forall (a \implica \forall b) = \exists a \implica \forall b$.

\item From $\forall b \leq b$, we get $\forall a \implica \forall b \leq
\forall a \implica b$. In addition, if $c \in C$ and $c \leq \forall a
\implica b$, then $c * \forall a \leq b$. Thus $c * \forall a \leq
\forall b$ and $c \leq \forall a \implica \forall b$. Hence we have shown
that $\forall (\forall a \implica b) = \forall a \implica \forall b$.

\item Since $\forall b \leq b$, $\exists a \vee \forall b \leq
\exists a \vee b$. Now, if $c \in C$ and $c \leq \exists a \vee b$,
by condition (s2) in the definition of $m$-relatively complete
subalgebra, there must be $c' \in C$ such that $c \leq \exists a
\vee c'$ and $c' \leq b$. Then $c' \leq \forall b$ and $c \leq
\exists a \vee \forall b$. Thus, we have shown that $\forall(\exists
a \vee b) = \exists a \vee \forall b$.

\item We know that $a * a \leq \exists a * \exists a$. In addition,
if $c \in C$ and $a * a \leq c$, by condition (s3), there is $c'
\in C$ such that $c' * c' \leq c$ and $a \leq c'$. Then $\exists a
\leq c'$ and $\exists a * \exists a \leq c' * c' \leq c$. We have
thus proved that $\exists(a*a) = \exists a * \exists a$.
\end{enumerate}

Conversely, let $\langle \mathbf{A}, \exists, \forall\rangle$ be a
monadic BL-algebra. From Lemma \ref{LEMA: para todo A es
subalgebra}, we already know that $\forall\mathbf{A}$ is a BL-subalgebra of
$\mathbf{A}$. Let us now show that conditions (s1)--(s3) hold.

\begin{enumerate}[(s1)]
\item Using the properties from Lemma \ref{properties MBL}, we have
that if $c \leq a$ for some $c \in \forall A$, then $c = \forall c
\leq \forall a \leq a$. Thus $\forall a = \max\{c \in \forall A: c
\leq a\}$. Analogously $\exists a = \min\{c \in \forall A: c \geq
a\}$.

\item Assume $c_1 \leq c_2 \vee a$ for some $c_1, c_2 \in \forall
A$, $a \in A$. Then, using the properties in Lemma \ref{properties
MBL} and the axioms for monadic BL-algebras, we get that $c_1 \leq
c_2 \vee \forall a$ and $\forall a \leq a$.

\item Similarly to the previous paragraph, if $a * a \leq c$ for some $c \in \forall A$ and $a \in
A$, then $\exists a * \exists a \leq c$ and $a \leq \exists a$,
$\exists a \in \forall A$.
\end{enumerate}
This completes the proof that $\forall\mathbf{A}$ is an $m$-relatively
complete subalgebra of $\mathbf{A}$.
\end{proof}

The following is the most important example of monadic BL-algebras
built according to the previous theorem.

\begin{example} \label{EJ: functional MBL}
Consider a BL-chain $\mathbf{A}$ and a nonempty set $X$. We restrict our attention to those elements $f \in A^X$ such that $\inf\{f(x): x \in X\}$ and $\sup\{f(x):x \in X\}$ both exist in $\mathbf{A}$. We denote by $S$ the subset of $A^X$ of these \lq\lq safe\rq\rq \ elements. For every $f \in S$, we define: $(\forall_\wedge f)(x) = \inf\{f(y): y \in X\}$ and $(\exists_\vee f)(x) = \sup\{f(y): y \in X\}$, $x \in X$. Note that $\forall_\wedge f$ and $\exists_\vee f$ are constant maps.

Let $\mathbf{B}$ be a BL-subalgebra of $\mathbf{A}^X$ contained in $S$ such that for every $f \in B$, $\forall_\wedge f, \exists_\vee f \in B$. We claim that $\mathbf{B}$ has a natural structure of monadic BL-algebra.

Let $C$ be the subset of constant maps of $A^X$. We claim that $\mathbf{B} \cap \mathbf{C}$ is an $m$-relatively complete subalgebra of $\mathbf{B}$. Indeed, since $\mathbf{B}$ and $\mathbf{C}$ are subalgebras of $\mathbf{A}^X$, it is clear that $\mathbf{B} \cap \mathbf{C}$ is a subalgebra of $\mathbf{B}$.

If $f \in B$, then $\forall_\wedge f \in B$, so $\max \{c \in B \cap C: c \leq f\} = \forall_\wedge f \in B$. Analogously, $\min\{c \in B \cap C: c \geq f\} = \exists_\vee f \in B$. This shows that condition (s1) holds.

Since $\mathbf{B} \cap \mathbf{C}$ is totally ordered, we may check condition
(s2$_\ell''$) instead of (s2). Assume $1 = c \vee f$ for some $f \in
B$ and $c \in B \cap C$. Put $c(x) = c_0 \in A$, $x \in X$. Then $c_0
\vee f(x) = 1$ for every $x \in X$. As $\mathbf{A}$ is totally
ordered, either $c_0 = 1$ or $f(x) = 1$ for every $x \in X$. Thus
(s2$_\ell''$) holds.

Finally, let us show condition (s3). Assume $f * f \leq c$ for some
$f \in B$ and $c \in B \cap C$. Then $f(x)*f(x) \leq c_0$ for every $x
\in X$. Moreover, $f(x) * f(y) \leq c_0$ for every $x,y\in X$, since
$$f(x) * f(y) \leq (f(x) \vee f(y))^2 = f(x)^2 \vee f(y)^2 \leq
c_0.$$ Hence, $f(x) \leq f(y) \implica c_0$ for a fixed $y \in X$ and
every $x \in X$. Thus $(\exists_\vee f)(x) \leq f(y) \implica c_0$
(remember that $\exists_\vee f$ is a constant map). Now $f(y) \leq
(\exists_\vee f)(x) \implica c_0$ for every $y \in Y$. Then, $(\exists_\vee f)(x) \leq (\exists_\vee f)(x) \implica c_0$ and
$(\exists_\vee f)(x) * (\exists_\vee f)(x) \leq c_0$. This concludes
the proof that $\mathbf{B} \cap \mathbf{C}$ is an $m$-relatively complete subalgebra
of $\mathbf{B}$.

By the previous theorem, $\langle \mathbf{B}, \exists_\vee, \forall_\wedge\rangle$ is a monadic BL-algebra. Monadic BL-algebras of this form are called {\em functional monadic BL-algebras}.

Observe that if $\mathbf{A}$ is $|X|$-complete, then $S = A^X$ and $\langle \mathbf{A}^X, \exists_\vee, \forall_\wedge\rangle$ is a functional monadic BL-algebra.
\end{example}

\begin{obs}
Observe that in the previous example the condition that $\mathbf{A}$
is totally ordered was only necessary to prove condition
(s2$_\ell''$) in the definition of $m$-relatively complete
subalgebras. In fact, there exist (non totally ordered)
complete BL-algebras $\mathbf{A}$ for which the subalgebra
$\mathbf{C}$ of constant maps in $\mathbf{A}^X$ is not
$m$-relatively complete. For example, consider $\mathbf{A} = \langle
\mathbb{N}_0, \vee, \wedge, *, \to, 0, 1\rangle$, where
$\mathbb{N}_0$ is the set of nonnegative integers, $\wedge$ is the
least common multiple, $\vee$ is the greatest common divisor, $*$ is
ordinary multiplication and its residuum is given by $$a \implica b =
\begin{cases} 1 & \text{ if } a = b, \\ \frac{b}{gcd(a,b)} & \text{ if } a \ne b.
\end{cases}$$ It is easy to check that $\mathbf{A}$ is a complete
BL-algebra. Now consider the elements $c,f \in A^\mathbb{N}$ given
by $c(n) = 2$, $f(n) = p_n$ (the $n$-th odd prime), for every $n \in
\mathbb{N}$. Then $c \vee f = 1$. However, the only constant map
below $f$ is $0$, and then $c \vee 0 = c \ne 1$. This shows that
condition (s2$''$) does not hold. Observe that we may define an
algebra $\langle \mathbf{A}^\mathbb{N}, \exists_\vee, \forall_\wedge \rangle$ that satisfies all conditions of monadic
BL-algebras but (M4). In particular, this shows that axiom (M4) is
independent of the rest of the axioms.
\end{obs}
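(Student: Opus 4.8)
The remark can be settled with a single explicit witness, so the plan is to produce that witness and verify it has the advertised properties. First I would write down $\mathbf{A} = \langle \mathbb{N}_0, \vee, \wedge, *, \implica, 0, 1\rangle$ exactly as displayed: $\vee = \gcd$, $\wedge = \operatorname{lcm}$, $*$ the ordinary product, and the stated residuum; the lattice order is then reverse divisibility, with $0$ at the bottom and the integer $1$ at the top. Checking that $\mathbf{A}$ is a \emph{complete} BL-algebra is short prime-valuation bookkeeping: completeness holds because the $\gcd$ and $\operatorname{lcm}$ of an arbitrary family exist in $\mathbb{N}_0$ (an infinite $\operatorname{lcm}$ being $0$, a multiple of everything); and residuation ($x*z \le y$ iff $z \le x \implica y$), divisibility $x * (x \implica y) = \operatorname{lcm}(x,y) = x \wedge y$, and prelinearity $(x \implica y) \vee (y \implica x) = \gcd\big(\tfrac{x}{\gcd(x,y)}, \tfrac{y}{\gcd(x,y)}\big) = 1$ are each one-line verifications at each prime $p$ (with the conventions $\gcd(n,0)=n$ and $n \implica 0 = 0$ for $n \ne 0$).

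Next I would pass to $\mathbf{A}^{\mathbb{N}}$ with its subalgebra $\mathbf{C}$ of constant maps and the operators $(\forall_\wedge f)(x) = \inf_y f(y)$ and $(\exists_\vee f)(x) = \sup_y f(y)$; these are well defined since $\mathbf{A}$ is complete, and they equal $\max\{c \in C: c \le f\}$ and $\min\{c \in C: c \ge f\}$, so $\langle \mathbf{A}^{\mathbb{N}}, \exists_\vee, \forall_\wedge\rangle$ is a well-formed algebra of the right type. Now I would check that $\mathbf{C}$ satisfies conditions (s1) and (s3) but not (s2): (s1) is immediate from completeness of $\mathbf{A}$; for (s3), if a constant map with value $c_0$ satisfies $f * f \le c$, i.e.\ $c_0 \mid f(x)^2$ for every $x$, then $c_0 \mid \gcd_x(f(x)^2) = (\gcd_x f(x))^2$, whence $(\exists_\vee f)^2 \le c$, and since also $f \le \exists_\vee f$, the element $c_2 := \exists_\vee f$ witnesses (s3) (this mirrors the (s3) argument of \ejeref{EJ: functional MBL}). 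Reading off the forward half of the proof of \thmref{TEO: caracterizacion de la imagen del cuantificador}, whose verifications of (M1), (M2), (M3) use only (s1) together with $\mathbf{C}$ being a subalgebra and whose verification of (M5) uses only (s3), and noting that none of these uses (s2), I conclude that $\langle \mathbf{A}^{\mathbb{N}}, \exists_\vee, \forall_\wedge\rangle$ satisfies (M1), (M2), (M3) and (M5).

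Finally I would refute (M4) with the pair from the remark: let $a$ be the constant map with value $2$ and $b(n) = p_n$ the $n$-th odd prime. Then $\exists_\vee a = a$ and $(\exists_\vee a \vee b)(n) = \gcd(2, p_n) = 1$ for all $n$, so $\forall_\wedge(\exists_\vee a \vee b) = \mathbf{1}$, the top of $\mathbf{A}^{\mathbb{N}}$; but $\forall_\wedge b = \inf_n p_n = 0$, since no nonzero integer is divisible by every odd prime, so $\exists_\vee a \vee \forall_\wedge b$ is the constant map with value $\gcd(2,0) = 2$. These differ, so (M4) fails; and the same pair witnesses the failure of (s2$''$), since $\mathbf{1} = a \vee b$ while the only constant map below $b$ is the constant map $0$, whose join with $a$ is not $\mathbf{1}$, so $\mathbf{C}$ is indeed not $m$-relatively complete. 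Hence $\langle \mathbf{A}^{\mathbb{N}}, \exists_\vee, \forall_\wedge\rangle$ is a BL-algebra validating (M1)--(M3) and (M5) but not (M4), so (M4) is not derivable from the remaining monadic axioms; it is independent. The main obstacle is not conceptual: it is the bookkeeping confirming $\mathbf{A}$ is a complete BL-algebra and the care that, in replacing a BL-chain by the non-linearly-ordered $\mathbf{A}$, only (s2) is lost while (s1) and (s3) -- and hence (M1)--(M3) and (M5) -- survive.
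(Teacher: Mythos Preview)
Your proposal is correct and follows the paper's approach: the paper's remark is terse and simply asserts the claims, whereas you fill in the details (verifying $\mathbf{A}$ is a complete BL-algebra, checking (s1) and (s3), invoking the relevant parts of the proof of \thmref{TEO: caracterizacion de la imagen del cuantificador} to get (M1)--(M3) and (M5), and exhibiting the failure of (M4) and (s2$''$) directly). Your separate verification of (s3) via $\gcd_x f(x)^2 = (\gcd_x f(x))^2$ is a useful addition, since the (s3) argument in \ejeref{EJ: functional MBL} relied on $\mathbf{A}$ being a chain through the step $(f(x) \vee f(y))^2 = f(x)^2 \vee f(y)^2$, which is not available here.
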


\section{Hájek's modal and monadic fuzzy logic}

In his monograph \cite{Hajek98libro} Hájek defined the fuzzy modal logic
\ScBL\ as a modal expansion of his basic logic. Later, in \cite{Hajek10},
he presented an axiomatization for this logic and proved its strong
completeness with respect to its generalized semantics. In this section we will show that monadic BL-algebras are the equivalent algebraic semantics of the logic \ScBL.

The modal logic \ScBL\ is equivalent to the monadic fragment
\mBLpt\ of the fuzzy predicate calculus \BLpt, which
contains only unary predicates and just one object variable $x$ (without object constants). The
propositional variable $p_i$ is associated with the unary predicate
$P_i(x)$ and the modalities $\square$ and $\lozenge$ correspond to
the quantifiers $(\forall x)$ and $(\exists x)$, respectively. For
this reason, and to continue the algebraic tradition of naming {\em
monadic} the algebraic semantics of monadic fragments of several
logics (Boolean, intuitionistic, \L ukasiewicz, etc.), we opted to
call the algebras corresponding to the logic \ScBL\ {\em monadic
BL-algebras}. However, in this section we will work in the language of the modal logic \ScBL\ instead of in the monadic fuzzy language.

ºWe now recall the basic definitions of \ScBL. The axiom schemata  are the ones for the basic logic
\BL\ together with the following modal axiom schemata ($\nu$ stands for any
propositional combination of formulas beginning with $\square$ or
$\lozenge$):

\begin{itemize}
\item[$(\square 1)$] $\square \varphi \implica \varphi$
\item[$(\lozenge 1)$] $\varphi \implica \lozenge \varphi$
\item[$(\square 2)$] $\square (\nu \implica \varphi) \implica (\nu \implica \square \varphi)$
\item[$(\lozenge 2)$] $\square (\varphi \implica \nu) \implica (\lozenge \varphi \implica \nu)$
\item[$(\square 3)$] $\square (\nu \vee \varphi) \implica (\nu \vee \square \varphi)$
\item[$(\lozenge 3)$] $\lozenge ( \varphi * \varphi) \equiv \lozenge \varphi * \lozenge \varphi$
\end{itemize}
where $\varphi \equiv \psi$ stands for $(\varphi \implica \psi) \wedge (\psi \implica \varphi)$.

The inference rules of \ScBL\ are:

\begin{itemize}
\item[(MP)] $\varphi, \varphi \implica \psi \vdash \psi$
\item[(Nec)] $\varphi \vdash \square \varphi$
\end{itemize}

The general semantics of \ScBL\ is given by Kripke models. A {\em
Kripke model} for \ScBL\ is a triple $K = \langle X, e,
\mathbf{A}\rangle$ where $X$ is a nonempty set of worlds,
$\mathbf{A}$ is a BL-chain and $e\colon Prop \times X \to A$ is an
evaluation map, $Prop$ being the set of propositional variables. The
evaluation map extends to any formula:
\begin{itemize}
\item $e(0,x) = 0^\mathbf{A}$, $e(1,x) = 1^\mathbf{A}$.

\item $e(\varphi \wedge \psi, x) = e(\varphi,x) \wedge^\mathbf{A}
e(\psi,x)$, and the same for $\vee$, $\to$ and $*$.

\item $e(\square \varphi, x) = \inf \{e(\varphi,y): y \in X\}$.

\item $e(\lozenge \varphi, x) = \sup \{e(\varphi,y): y \in X\}$.
\end{itemize}
Note that $e(\square \varphi, x)$ and $e(\lozenge \varphi, x)$ may
be undefined. We say that the Kripke model $K$ is {\em safe} if
$e(\square \varphi, x)$ and $e(\lozenge \varphi, x)$ are defined for
every formula $\varphi$.

We write $K \models \varphi$ if $e(\varphi,x) = 1$ for every $x \in
X$. $K$ is a model of a set of formulas $\Gamma$ if $K \models
\varphi$ for every $\varphi \in \Gamma$.

\begin{thm}[Hájek {\cite[Theorem 2]{Hajek10}}]
The modal logic \ScBL\ is strongly complete with respect to its
general semantics, i.e. the following are equivalent for every set
of formulas $\Gamma \cup \{\varphi\}$:
\begin{enumerate}[$(1)$]
\item $\Gamma \vdash \varphi$

\item $K \models \varphi$ for every safe model $K$ of $\Gamma$.
\end{enumerate}
\end{thm}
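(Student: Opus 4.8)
The plan is to prove the strong completeness of $\ScBL$ by the standard Lindenbaum--Tarski argument, now routed through the representation theorems for $\mathbb{MBL}$ proved earlier in the paper. The soundness direction $(1) \Rightarrow (2)$ is straightforward: one checks that each modal axiom schema $(\square1)$--$(\lozenge3)$ evaluates to $1$ in every safe Kripke model $K = \langle X, e, \mathbf{A}\rangle$ and that the rules (MP) and (Nec) preserve this property, using the fact that for a fixed $K$ the map $\varphi \mapsto (x \mapsto e(\varphi,x))$ lands in the set $S$ of safe elements of $\mathbf{A}^X$ and interacts with $\inf$ and $\sup$ exactly as $\forall_\wedge$, $\exists_\vee$ do; the axioms involving $\nu$ are handled because $e(\nu,\cdot)$ is a constant map, matching the elements of $C$ in Example~\ref{EJ: functional MBL}.

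The substance is the converse. First I would form the Lindenbaum algebra: take the set of formulas modulo the congruence $\varphi \sim_\Gamma \psi$ iff $\Gamma \vdash \varphi \equiv \psi$, and verify that the modal axioms and (Nec) make the resulting algebra $\mathbf{A}_\Gamma$ a monadic BL-algebra in which $\square$, $\lozenge$ descend to $\forall$, $\exists$; here axioms $(\square1)$--$(\lozenge3)$ translate directly into (M1)--(M5) once one knows, via the Remark following Lemma~\ref{properties MBL}, that classes of the form $\square\varphi$ (equivalently $\lozenge\varphi$) are exactly the elements of $\forall A_\Gamma = \exists A_\Gamma$, so that the restricted axioms matching $(\square2)$, $(\square3)$, $(\lozenge2)$ suffice. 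The set $[\Gamma] = \{[\varphi] : \Gamma \vdash \varphi\}$ is a monadic filter, and if $\Gamma \not\vdash \varphi$ then $[\varphi] \notin [\Gamma]$. By Corollary~\ref{cor:iso_filtrosmonadicos_filtrosbl} and the subdirect representation theorem at the end of Section~2, we may pass to a quotient and then subdirectly decompose so that, without loss of generality, $\exists \mathbf{A}_\Gamma$ is a BL-chain and $[\varphi] \ne 1$ while every element of $[\Gamma]$ equals $1$.

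Next I would realize this MBL-algebra as a functional one. Applying the final representation theorem of Section~2 to $\mathbf{A}_\Gamma$, embed the underlying BL-algebra into $\prod_{i\in I}\mathbf{A}_i$ with each $\mathbf{A}_i$ a BL-chain and with $\exists\mathbf{A}_\Gamma$ embedded via each projection; then $\exists\mathbf{A}_\Gamma$ being totally ordered lets one use a single BL-chain $\mathbf{A}$ into which everything embeds, and the image is a BL-subalgebra $\mathbf{B}$ of $\mathbf{A}^X$ (with $X = I$) contained in the safe elements and closed under $\forall_\wedge$, $\exists_\vee$, where these coincide with the original $\forall$, $\exists$ because $\exists\mathbf{A}_\Gamma$ maps into the constant maps. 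Defining $e(p_i, x)$ to be the $x$-th coordinate of the image of $[P_i]$ gives a safe Kripke model $K$ with $e(\psi, x) = $ the $x$-th coordinate of (the image of) $[\psi]$ for all $\psi$, by induction on formulas. Then $K \models \Gamma$ since each $[\gamma]$ with $\gamma \in \Gamma$ is $1$, but $K \not\models \varphi$ since $[\varphi] \ne 1$, contradicting $(2)$.

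The main obstacle is the bookkeeping in the middle step: one must be careful that the quotient-then-subdirect-decompose manoeuvre preserves both "$[\varphi] \ne 1$'' and "$\exists$-image totally ordered'' simultaneously, and that after embedding into $\mathbf{A}^X$ the externally defined $\forall_\wedge, \exists_\vee$ genuinely agree with the algebra's own quantifiers rather than merely bounding them — this is exactly where Theorem~\ref{TEO: caracterizacion de la imagen del cuantificador} and the fact that $\exists\mathbf{A}_\Gamma \subseteq C$ do the work, since an $m$-relatively complete subalgebra pins down $\forall$ and $\exists$ uniquely as $\max$ and $\min$. Everything else is routine Blok--Pigozzi-style translation between axioms/rules and identities.
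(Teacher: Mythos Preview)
The paper does not prove this theorem at all: it is quoted from H\'ajek's paper \cite{Hajek10} and used as a black box to derive Theorem~4.3 and its corollaries. So there is no proof in the paper to compare against; your proposal is an attempt at an independent proof using the algebraic machinery of Sections~2--3. That is a legitimate and interesting goal, and the overall shape (Lindenbaum algebra, reduce to $\exists\mathbf{A}$ a chain, then realise as a functional/Kripke structure) is the natural one.

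There is, however, a genuine gap in your ``realisation'' step. The final representation theorem of Section~2 gives a BL-embedding $\mathbf{A}_\Gamma \hookrightarrow \prod_{i\in I}\mathbf{A}_i$ where the $\mathbf{A}_i$ are BL-chains that may all be \emph{different}, whereas a Kripke model $K=\langle X,e,\mathbf{A}\rangle$ in the sense of this paper (and of H\'ajek) requires a \emph{single} BL-chain $\mathbf{A}$, so that the associated functional algebra sits inside $\mathbf{A}^X$. Your sentence ``$\exists\mathbf{A}_\Gamma$ being totally ordered lets one use a single BL-chain $\mathbf{A}$ into which everything embeds'' is precisely the missing step: nothing in the paper lets you amalgamate the family $\{\mathbf{A}_i\}_{i\in I}$ over the common subchain $\exists\mathbf{A}_\Gamma$ into one BL-chain, and BL-chains do not in general have this amalgamation property. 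Without such an amalgamation you cannot produce a safe Kripke model refuting $\varphi$, and the argument stalls. H\'ajek's own proof in \cite{Hajek10} builds the countermodel by a different route (a Henkin-style construction of a linear theory and a canonical chain) rather than by the Rutledge-type subdirect decomposition; if you want to carry your algebraic strategy through, you would need either an amalgamation lemma for BL-chains over a common subchain or a direct construction of a single chain receiving all the $\mathbf{A}_i$, neither of which is provided by the paper.
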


\begin{obs}
Consider a safe Kripke model $K = \langle X, e, \mathbf{A}\rangle$. Note that we can turn the map $e\colon Prop
\times X \to A$ into a map $\overline{e}\colon Prop \to A^X$ given by the relation $\overline{e}(p)(x) = e(p,x)$. Since $K$ is safe, $\overline{e}$ extends to formulas in
the following way:
\begin{itemize}
\item $\overline{e}(0) = 0^{\mathbf{A}^X}$, $\overline{e}(1) = 1^{\mathbf{A}^X}$,

\item $\overline{e}(\varphi \wedge \psi) = \overline{e}(\varphi) \wedge^{\mathbf{A}^X}
\overline{e}(\psi)$, and the same for $\vee$, $\to$ and $*$,

\item $\overline{e}(\square \varphi) = \forall_\wedge
\overline{e}(\varphi)$,

\item $\overline{e}(\lozenge \varphi) = \exists_\vee
\overline{e}(\varphi)$.
\end{itemize}
Thus, it is clear that $\{\overline{e}(\varphi): \varphi \text{ formula}\} \subseteq A^X$ is the universe of a monadic functional BL-algebra (see Example \ref{EJ: functional MBL}).
\end{obs}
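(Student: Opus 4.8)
The plan is to verify that the set $B := \{\overline{e}(\varphi) : \varphi \text{ a formula}\} \subseteq A^X$ meets the three requirements that, by Example~\ref{EJ: functional MBL}, make $\langle \mathbf{B}, \exists_\vee, \forall_\wedge\rangle$ a functional monadic BL-algebra: (i) $\mathbf{B}$ is a BL-subalgebra of $\mathbf{A}^X$; (ii) $B$ is contained in the set $S$ of ``safe'' elements of $A^X$; and (iii) $\forall_\wedge f, \exists_\vee f \in B$ for every $f \in B$. Since $\mathbf{A}$ is a BL-chain, the ambient hypothesis of the Example is satisfied. Note that $\overline{e}$ need not be injective in $\varphi$, but this is irrelevant: $B$ is simply the image of the set of all formulas, which is well defined precisely because $K$ is safe so that $\overline{e}$ is defined on every formula.

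For (i) I would argue directly from the displayed recursion for $\overline{e}$: it gives $\overline{e}(0) = 0^{\mathbf{A}^X}$ and $\overline{e}(1) = 1^{\mathbf{A}^X}$, so the constants lie in $B$; and for any formulas $\varphi,\psi$ and any $\circ \in \{\wedge,\vee,*,\implica\}$ one has $\overline{e}(\varphi) \circ^{\mathbf{A}^X} \overline{e}(\psi) = \overline{e}(\varphi \circ \psi) \in B$. Hence $B$ is closed under all BL-operations and contains $0,1$, so it is the universe of a subalgebra $\mathbf{B} \leq \mathbf{A}^X$.

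For (ii) and (iii) I would use safety. By definition of a safe model, $e(\square\varphi,x) = \inf\{e(\varphi,y): y \in X\}$ and $e(\lozenge\varphi,x) = \sup\{e(\varphi,y): y \in X\}$ are defined for every formula $\varphi$; that is, $\inf\{\overline{e}(\varphi)(y): y \in X\}$ and $\sup\{\overline{e}(\varphi)(y): y \in X\}$ exist in $\mathbf{A}$ for every $\varphi$. Thus every element of $B$ is a safe element of $A^X$, i.e. $B \subseteq S$, which is (ii). Moreover $\forall_\wedge$ and $\exists_\vee$ are, by definition, the constant maps taking these infima and suprema as values, so the recursion yields $\forall_\wedge\overline{e}(\varphi) = \overline{e}(\square\varphi) \in B$ and $\exists_\vee\overline{e}(\varphi) = \overline{e}(\lozenge\varphi) \in B$, which is (iii).

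With (i)--(iii) established, Example~\ref{EJ: functional MBL} applies verbatim and gives that $\langle \mathbf{B}, \exists_\vee, \forall_\wedge\rangle$ is a functional monadic BL-algebra, as claimed. There is no genuine obstacle here; the only point deserving care is the bookkeeping that the global infima and suprema over $X$ occurring in the definition of $\forall_\wedge,\exists_\vee$ in the Example coincide with the world-wise infima and suprema appearing in the Kripke semantics of $\square,\lozenge$ (they do, by the very definition of $\overline{e}$ on modal formulas), and that safety is exactly what guarantees these exist, so that $B$ indeed sits inside $S$.
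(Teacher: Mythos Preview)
Your proposal is correct and follows exactly the approach the paper intends: the paper treats this as an immediate remark, simply asserting the claim and pointing to Example~\ref{EJ: functional MBL}, while you have carefully spelled out the verification of the three hypotheses (subalgebra, safety, closure under $\forall_\wedge,\exists_\vee$) that the Example requires. There is nothing to add or correct.
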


Following the notation of the last remark, we can rewrite Hájek's completeness theorem as follows.

\begin{thm} \label{TEO: completitud 2}
The following are equivalent for every set of formulas $\Gamma \cup
\{\varphi\}$:
\begin{enumerate}[$(1)$]
\item $\Gamma \vdash \varphi$

\item $\overline{e}(\varphi) = 1$ for every $\overline{e}\colon Prop \to B$, where $\langle \mathbf{B}, \exists_\vee, \forall_\wedge\rangle$ is any functional monadic BL-algebra and $\overline{e}(\gamma) = 1$ for every $\gamma \in \Gamma$.
\end{enumerate}
\end{thm}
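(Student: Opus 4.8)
The plan is to show that Theorem~\ref{TEO: completitud 2} is just a reformulation of H\'ajek's completeness theorem via the translation between safe Kripke models and functional monadic BL-algebras described in the preceding remark. Since the direction $(1) \Rightarrow (2)$ is the soundness part, I would first check that every functional monadic BL-algebra $\langle \mathbf{B}, \exists_\vee, \forall_\wedge\rangle$ with an evaluation $\overline{e}\colon Prop \to B$ validates all the axioms and inference rules of \ScBL. By Example~\ref{EJ: functional MBL}, $\langle \mathbf{B}, \exists_\vee, \forall_\wedge\rangle$ is a genuine MBL-algebra, so the BL-axioms hold because $\mathbf{B}$ is a BL-algebra, and the modal axiom schemata $(\square 1)$--$(\lozenge 3)$ translate into the defining identities (M1)--(M5) together with the consequences collected in the Remark after Lemma~\ref{properties MBL} (the equalities $\forall(a\implica c) = \exists a \implica c$, $\forall(c\implica a) = c\implica \forall a$, $\forall(c\vee a) = c\vee\forall a$ for $c \in \forall A$), where one uses that the evaluation of $\nu$ always lands in $\exists B = \forall B$ because $\nu$ is a propositional combination of formulas headed by $\square$ or $\lozenge$ and $\exists\mathbf{B}$ is a subalgebra by Lemma~\ref{LEMA: para todo A es subalgebra}; (MP) and (Nec) are clearly preserved. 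Hence $\Gamma \vdash \varphi$ implies $\overline{e}(\varphi) = 1$ whenever $\overline{e}(\gamma) = 1$ for all $\gamma \in \Gamma$.

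For the direction $(2) \Rightarrow (1)$, I would argue by contraposition using H\'ajek's theorem: if $\Gamma \not\vdash \varphi$, then by the cited completeness theorem there is a safe Kripke model $K = \langle X, e, \mathbf{A}\rangle$ of $\Gamma$ with $K \not\models \varphi$, i.e.\ $e(\varphi,x_0) \ne 1$ for some $x_0 \in X$. I would then pass to the associated map $\overline{e}\colon Prop \to A^X$ from the Remark; since $K$ is safe, $\overline{e}$ extends to all formulas with $\overline{e}(\psi)(x) = e(\psi,x)$, and the set $B_0 = \{\overline{e}(\psi) : \psi \text{ a formula}\}$ is the universe of a functional monadic BL-algebra $\langle \mathbf{B}, \exists_\vee, \forall_\wedge\rangle$ contained in the safe elements of $\mathbf{A}^X$ — this is exactly the content of that Remark, relying on Example~\ref{EJ: functional MBL}. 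Because $\overline{e}(\gamma)(x) = e(\gamma,x) = 1$ for every $x$ and every $\gamma \in \Gamma$, we have $\overline{e}(\gamma) = 1^{\mathbf{A}^X}$, so the hypothesis of $(2)$ is met by this $\overline{e}$, yet $\overline{e}(\varphi)(x_0) = e(\varphi,x_0) \ne 1$, so $\overline{e}(\varphi) \ne 1$. This contradicts $(2)$, completing the equivalence.

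The only slightly delicate point — and the step I expect to require the most care — is verifying that $B_0$ together with $\exists_\vee, \forall_\wedge$ really satisfies the closure requirements of Example~\ref{EJ: functional MBL}: one must check that $\overline{e}(\square\psi) = \forall_\wedge\overline{e}(\psi)$ and $\overline{e}(\lozenge\psi) = \exists_\vee\overline{e}(\psi)$ (immediate from the definitions of $e(\square\psi,x)$ and $e(\lozenge\psi,x)$ and safety), so that $B_0$ is closed under $\forall_\wedge$ and $\exists_\vee$ and all infs/sups involved exist in $\mathbf{A}$. Once this bookkeeping is in place the theorem follows immediately, since it is essentially a change of language from Kripke semantics to the functional algebras; no new mathematical content beyond H\'ajek's theorem, the soundness check above, and the dictionary established in Example~\ref{EJ: functional MBL} and the Remark is needed.
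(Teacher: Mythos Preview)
Your proposal is correct and matches the paper's approach: the paper does not give an explicit proof but simply presents Theorem~\ref{TEO: completitud 2} as a rewriting of H\'ajek's completeness theorem via the correspondence between safe Kripke models and functional monadic BL-algebras established in the preceding Remark, and your argument is precisely the unpacking of that correspondence (with the minor variation that for $(1)\Rightarrow(2)$ you verify soundness directly in MBL-algebras rather than invoking the reverse passage from a functional algebra back to a safe Kripke model---both routes are straightforward and equivalent in content).
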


\begin{thm}
The variety $\mathbb{MBL}$ of monadic BL-algebras is the equivalent
algebraic semantics for the logic \ScBL\ (and \mBLpt).
\end{thm}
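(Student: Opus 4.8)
The plan is to apply the Blok--Pigozzi machinery for algebraizable logics. The key observation is that the logic $\ScBL$ extends $\BL$, and $\BL$ is already known to be algebraizable with equivalent algebraic semantics the variety $\mathbb{BL}$, with defining equation $\varphi \approx 1$ and equivalence formula $\Delta(\varphi,\psi) = \{\varphi \implica \psi, \psi \implica \varphi\}$ (or equivalently the single formula $\varphi \equiv \psi$). Since $\ScBL$ is obtained from $\BL$ by adding axioms and the rule (Nec), it is a natural candidate for being algebraizable with the \emph{same} defining equation and equivalence formula, now over the expanded language with $\square$ and $\lozenge$. So the first step is to verify that the four conditions of Blok--Pigozzi algebraizability (reflexivity, symmetry, transitivity/MP of the equivalence formulas, and the congruence property for all connectives, including the new modal ones) hold for $\ScBL$. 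Reflexivity, symmetry and transitivity are inherited from $\BL$; the only genuinely new congruence conditions to check are
\[
\varphi \equiv \psi \vdash \square\varphi \equiv \square\psi, \qquad \varphi \equiv \psi \vdash \lozenge\varphi \equiv \lozenge\psi,
\]
which follow from the derived rule of replacement of provable equivalents together with (Nec) and axioms $(\square 2)$, $(\lozenge 2)$ — essentially the syntactic counterparts of properties (M22) and (M23). One also checks the two bridge conditions $\varphi \dashv\vdash \varphi \equiv 1$. This establishes that $\ScBL$ is algebraizable.

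Once algebraizability is in hand, the equivalent algebraic semantics is, by general theory, the quasivariety axiomatized by the translations of the axioms and rules of $\ScBL$. The second step is therefore to identify this quasivariety. The translations of the $\BL$ axioms give the variety $\mathbb{BL}$; the translations of the modal axioms $(\square 1)$--$(\lozenge 3)$ give exactly equations (M1), (M5) together with the restricted forms of (M2), (M3), (M4) where the ``modal'' variable is replaced by an arbitrary propositional combination $\nu$ of $\square$/$\lozenge$-formulas; and the translation of (Nec) gives the quasi-equation ``$x \approx 1 \Rightarrow \square x \approx 1$'', which in the presence of the $\BL$ structure and (M1)--(M3) is equivalent to the monotonicity/congruence of $\square$ and hence to an equation. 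The third step — and the heart of the matter — is to show that this presentation defines precisely the variety $\mathbb{MBL}$. One inclusion is routine: every MBL-algebra satisfies all these (quasi-)equations, since the restricted modal axioms are instances of (M2)--(M4) (note $\exists A = \forall A$ is the set of images of $\square$/$\lozenge$, and by Lemma~\ref{LEMA: para todo A es subalgebra} this is a subalgebra, so any $\nu$-combination lands in it, cf.\ the Remark after Lemma~\ref{properties MBL}). The converse requires showing that from the \emph{restricted} axioms $(\square 2)$--$(\square 3)$ one can \emph{derive} the full equations (M2)--(M4) with arbitrary $y$; this is exactly where one must prove that $\square A = \lozenge A$ and that this set is closed under all operations, so that the general case reduces to the case $y \in \square A$, which is the content of the restricted axioms. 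This derivation mirrors the arithmetic already carried out in Lemma~\ref{properties MBL} (in particular (M6), (M7), (M11)) but must be redone using only the weaker hypotheses; I expect this to be the main obstacle, as it requires care to ensure no full-strength axiom is used prematurely.

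The final step is to assemble the pieces. Having shown $\ScBL$ is algebraizable with equivalent algebraic semantics the quasivariety $\mathsf{Q}$ presented by the axiom/rule translations, and having shown $\mathsf{Q} = \mathbb{MBL}$, we conclude that $\mathbb{MBL}$ is the equivalent algebraic semantics of $\ScBL$. For the parenthetical claim about $\mBLpt$: H\'ajek established that $\ScBL$ and $\mBLpt$ are (definitionally/presentation) equivalent logics — the modalities $\square,\lozenge$ translate to the quantifiers $(\forall x),(\exists x)$ and back — so they share the same equivalent algebraic semantics, and $\mathbb{MBL}$ serves for both. Alternatively, and perhaps more in the spirit of the preceding section, one can give a direct proof bypassing the explicit algebraizability verification: use Theorem~\ref{TEO: completitud 2} to show that $\Gamma \vdash_{\ScBL} \varphi$ iff $\mathbb{MBL} \models \{\gamma \approx 1 : \gamma \in \Gamma\} \Rightarrow \varphi \approx 1$ (the nontrivial direction using that functional monadic BL-algebras generate $\mathbb{MBL}$ as a quasivariety, which in turn follows from the subdirect representation Theorem at the end of Section~2 embedding any MBL-algebra into a product of chains, and then chains into functional algebras), and then invoke the intrinsic characterization of algebraizability to upgrade this completeness statement to full algebraizability with $\mathbb{MBL}$ as the equivalent algebraic semantics. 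Either route works; the Blok--Pigozzi verification is cleaner to write, while the semantic route leans on the machinery already developed in Sections 2 and 3.
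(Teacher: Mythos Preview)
Your proposal is broadly correct and you identify both viable strategies, but the paper takes your ``semantic route'' in a simpler form than you describe, and your sketch of that route contains an unnecessary detour with a flawed step.

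The paper verifies the Blok--Pigozzi conditions (ALG1) and (ALG2) directly. (ALG2) is trivial. For (ALG1), the forward direction $\Gamma \vdash \varphi \Rightarrow \{\gamma \approx 1\} \models_{\mathbb{MBL}} \varphi \approx 1$ is handled by \emph{plain soundness}: one checks that each axiom of $\ScBL$ translates to an identity valid in $\mathbb{MBL}$ (this is where the definition of MBL-algebras and Lemma~\ref{properties MBL} are used, in particular to handle the $\nu$-instances of $(\square 2)$, $(\lozenge 2)$, $(\square 3)$), and that (MP) and (Nec) preserve validity (the latter via (M\ref{M10})). The converse direction is the single line: if $\varphi \approx 1$ holds in every MBL-algebra under the hypotheses, then in particular it holds in every \emph{functional} monadic BL-algebra, and Theorem~\ref{TEO: completitud 2} gives $\Gamma \vdash \varphi$. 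That is all.

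Note what this avoids. You propose to obtain $\models_{\mathbb{MBL}}$ from $\models_{\text{functional}}$ by arguing that functional algebras generate $\mathbb{MBL}$ as a quasivariety, appealing to the subdirect representation theorem at the end of Section~2. But that theorem only embeds the \emph{underlying BL-algebra} of an MBL-algebra into a product of BL-chains; it does not produce MBL-chains, let alone functional ones, so the argument does not go through as stated. The paper sidesteps this entirely: the generation fact is not an input to the proof but a \emph{corollary} of it. Likewise, your Route~1 obstacle---deriving the full equations (M2)--(M4) from the $\nu$-restricted axiom schemata in an arbitrary algebra of the translated quasivariety---is exactly what H\'ajek's completeness theorem renders unnecessary: instead of arguing upward from the restricted axioms, the paper argues downward from the full MBL identities and closes the loop via completeness for functional models.
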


\begin{proof}
Following \cite{BlPi89}, it is enough to show the next two conditions for every set of
formulas $\Gamma \cup \{\varphi, \psi\}$:
\begin{enumerate}[({A}LG1)]
\item $\Gamma \vdash \varphi$ if and only if $\{\gamma \approx 1:
\gamma \in \Gamma\} \models_\mathbb{MBL} \varphi \approx 1$.

\item $\varphi \approx \psi \leftmodels \models_\mathbb{MBL} (\varphi \implica \psi) \wedge (\psi \implica \phi) \approx
1$.
\end{enumerate}
Condition (ALG2) is trivially verified. We show condition
(ALG1).

For the forward implication, note that if $\Gamma \vdash \varphi$,
there exists a proof of $\varphi$ from $\Gamma$ and the axioms of
\ScBL\ by successive application of the inference rules (MP) and
(Nec). Thus, it is enough to show that the equation $\varphi
\approx 1$ is valid in $\mathbb{MBL}$ for every axiom $\varphi$ of
\ScBL\ and that the inference rules preserve validity. The former
statement follows from the definition of monadic BL-algebras and Lemma \ref{properties MBL}. The preservation of
(MP) is trivial and the preservation of (Nec) follows from (M\ref{M10}).

For the converse implication, simply observe that, since
$\langle \mathbf{B}, \exists_\vee, \forall_\wedge\rangle \in \mathbb{MBL}$,
condition $(2)$ of Theorem \ref{TEO: completitud 2} holds.
\end{proof}

Thus, from the general theory of algebraic logic, we get the next
corollary.

\begin{cor} \label{COR: corresondencia extensiones y subvariedades}
There is a one-one correspondence between axiomatic extension of
\ScBL\ (or \mBLpt) and subvarieties of $\mathbb{MBL}$.
\end{cor}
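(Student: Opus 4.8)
The plan is to derive the correspondence directly from the algebraizability established in the preceding theorem, invoking the general Blok--Pigozzi machinery for algebraizable logics \cite{BlPi89}. The two translations witnessing algebraizability are $\tau\colon \varphi \mapsto \{\varphi \approx 1\}$ from formulas to equations (the one used in (ALG1)) and $\rho\colon \varphi \approx \psi \mapsto \{(\varphi \implica \psi)\wedge(\psi \implica \varphi)\}$ from equations to formulas (the one used in (ALG2)). By the previous theorem these translations are mutually inverse up to interderivability in $\ScBL$ and up to $\mathbb{MBL}$-equivalence; this is exactly the content of conditions (ALG1) and (ALG2), and it is all that the general theory needs in order to yield a lattice correspondence.

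First I would make the two maps explicit. To an axiomatic extension $S'$ of $\ScBL$, obtained by adjoining a set $\Sigma$ of axiom schemata while retaining the rules (MP) and (Nec), I associate the class $V_{S'}$ of those members of $\mathbb{MBL}$ that satisfy all equations in $\{\sigma \approx 1 : \sigma \in \Sigma\}$; as this is a set of equations, $V_{S'}$ is a subvariety of $\mathbb{MBL}$. Conversely, to a subvariety $V \leq \mathbb{MBL}$, defined relative to $\mathbb{MBL}$ by a set $E$ of equations, I associate the axiomatic extension $S_V$ obtained by adjoining to $\ScBL$ the axiom schemata $\{\rho(\varepsilon) : \varepsilon \in E\}$. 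Both assignments are plainly inclusion-reversing (a larger stock of axioms cuts down the class of models), so it remains only to show they are mutually inverse.

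Next I would verify mutual invertibility, which is where algebraizability does the real work. Going $S' \mapsto V_{S'} \mapsto S_{V_{S'}}$, the theorems of $S_{V_{S'}}$ are precisely the formulas $\varphi$ with $V_{S'} \models \varphi \approx 1$; applying (ALG1) to $S'$ and using that $\tau$ and $\rho$ compose to the identity up to interderivability, these coincide with the theorems of $S'$. Going the other way, $V \mapsto S_V \mapsto V_{S_V}$, the equational theory of $V_{S_V}$ relative to $\mathbb{MBL}$ is generated by $\{\tau(\rho(\varepsilon)) : \varepsilon \in E\}$, and by (ALG2) each $\tau(\rho(\varepsilon))$ is $\mathbb{MBL}$-equivalent to $\varepsilon$, whence $V_{S_V} = V$. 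Thus the correspondence is a bijection, in fact a dual order isomorphism between the complete lattice of axiomatic extensions and the complete lattice of subvarieties.

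The main obstacle is the bookkeeping needed to guarantee that axiomatic extensions match \emph{full subvarieties} rather than merely subquasivarieties. In the general Blok--Pigozzi setting the extensions of an algebraizable logic correspond to the subquasivarieties of its equivalent algebraic semantics; the reason we land inside the subvariety lattice here is twofold. On the one hand, $\mathbb{MBL}$ is itself an equational class; on the other, an axiomatic extension adds only axiom schemata, which are closed under substitution, so their image under $\tau$ is a genuine set of equations carving out an equationally defined subclass. Making this airtight amounts to checking that the substitution-invariance of axiom schemata is exactly what upgrades the Blok--Pigozzi quasivariety correspondence to the varietal one asserted here; once that observation is in place, the result is immediate.
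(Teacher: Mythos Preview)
Your proposal is correct and follows the same approach as the paper: the corollary is derived directly from the algebraizability theorem via the general Blok--Pigozzi machinery \cite{BlPi89}, and in fact the paper offers no proof beyond the sentence ``Thus, from the general theory of algebraic logic, we get the next corollary.'' Your write-up simply unpacks that sentence, including the standard observation that \emph{axiomatic} extensions (as opposed to arbitrary finitary extensions) correspond to sub\emph{varieties} rather than subquasivarieties.
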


From the same theorem we can also derive an important algebraic
result for the variety $\mathbb{MBL}$.

\begin{cor}
The variety $\mathbb{MBL}$ is generated (as a variety) by the functional monadic BL-algebras.
\end{cor}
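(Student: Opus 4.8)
The plan is to deduce this from the algebraizability established in the previous theorem together with the completeness result of Theorem~\ref{TEO: completitud 2}. Write $\mathbb{W}$ for the variety generated by the functional monadic BL-algebras. Since every functional monadic BL-algebra belongs to $\mathbb{MBL}$ by Example~\ref{EJ: functional MBL}, we already have $\mathbb{W}\subseteq\mathbb{MBL}$, and it remains to show that every identity $\varphi\approx\psi$ in the MBL-language that holds in all functional monadic BL-algebras holds throughout $\mathbb{MBL}$; equivalently, that $\mathbb{MBL}$ and $\mathbb{W}$ have the same equational theory.

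First I would reduce to identities of the form $\chi\approx 1$. By condition (ALG2) from the proof of the previous theorem, the identity $\varphi\approx\psi$ and the identity $\chi\approx 1$, where $\chi:=(\varphi\implica\psi)\wedge(\psi\implica\varphi)$, are mutually derivable as equational consequences over $\mathbb{MBL}$; that mutual derivation is a universal sentence valid in all of $\mathbb{MBL}$, hence valid in the functional monadic BL-algebras as well, since these are members of $\mathbb{MBL}$. Consequently $\varphi\approx\psi$ holds in every functional monadic BL-algebra if and only if $\chi\approx 1$ does, and the same equivalence holds with ``$\mathbb{MBL}$'' in place of ``every functional monadic BL-algebra''. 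So it suffices to prove: if $\chi\approx 1$ holds in all functional monadic BL-algebras, then $\chi\approx 1$ holds in $\mathbb{MBL}$.

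Now I would pass to the logic. Regarding the term $\chi$ as a formula of \ScBL\ in the obvious way (the propositional variables being the free generators), ``$\chi\approx 1$ holds in every functional monadic BL-algebra'' is exactly condition $(2)$ of Theorem~\ref{TEO: completitud 2} with $\Gamma=\emptyset$, since an assignment of elements of $B$ to the variables of $\chi$ is the same thing as a map $\overline{e}\colon Prop\to B$ and $\overline{e}(\chi)$ is the value of the term. Hence this condition is equivalent to $\vdash_{\ScBL}\chi$. On the other hand, condition (ALG1) with $\Gamma=\emptyset$ (from the previous theorem) says $\vdash_{\ScBL}\chi$ if and only if $\models_{\mathbb{MBL}}\chi\approx 1$. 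Chaining these equivalences yields precisely what was needed, so $\mathbb{MBL}$ and $\mathbb{W}$ satisfy the same identities and therefore coincide.

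The argument is essentially bookkeeping: no new algebraic construction is required beyond Example~\ref{EJ: functional MBL} and the completeness theorem. The only step that demands care is the first one, the passage from an arbitrary identity to one of the form $\chi\approx 1$; this uses the algebraizing equivalence formulas recorded in (ALG2) and, crucially, the observation that although the class of functional monadic BL-algebras is not itself a variety, it is a subclass of $\mathbb{MBL}$ and hence inherits the relevant universal sentences.
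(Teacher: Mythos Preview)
Your argument is correct and is exactly the intended one: the paper does not spell out a proof but simply records the corollary as a consequence of the algebraization theorem, and your derivation---combining Theorem~\ref{TEO: completitud 2} with (ALG1) and the reduction via (ALG2) to identities of the form $\chi\approx 1$---is the natural way to unpack that remark. The care you take with the reduction step (noting that the (ALG2) equivalence transfers to the subclass of functional algebras because they lie in $\mathbb{MBL}$) is appropriate and correct.
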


As a consequence of the algebraization of \ScBL\ by monadic BL-algebras, we may give a simplified set of axioms for this calculus. We define a calculus \ScpBL\ whose axiom schemata are all the ones for basic logic \BL\ together with the following axiom schemata:

\begin{enumerate}[({A}1)]
\item $\square \varphi \implica \varphi$
\item $\square (\varphi \implica \square \psi) \equiv (\lozenge \varphi \implica \square \psi)$
\item $\square (\square \varphi \implica \psi) \equiv (\square \varphi \implica \square \psi)$
\item $\square (\lozenge \varphi \vee \psi) \equiv (\lozenge \varphi \vee \square \psi)$
\item $\lozenge ( \varphi * \varphi) \equiv \lozenge \varphi * \lozenge \varphi$
\end{enumerate}
and the same rules of inference: modus ponens and necessitation. It is easy to prove in the standard way (by means of a Lindenbaum-Tarski algebra) that this calculus is sound and complete with respect to a semantics based on monadic BL-algebras. Indeed, the only non-immediate results are the content of the following lemma.

\begin{lem}
In \ScpBL:
\begin{enumerate}[$(1)$]
\item $\varphi \equiv \psi \vdash \square \varphi \equiv \square \psi$.
\item $\varphi \equiv \psi \vdash \lozenge \varphi \equiv \lozenge \psi$.
\end{enumerate}
\end{lem}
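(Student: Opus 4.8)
The plan is to prove both statements from the axioms of \ScpBL, imitating the derivations of (M\ref{M22}) and (M\ref{M23}) in Lemma \ref{properties MBL}, but now carried out \emph{syntactically} inside the calculus. Recall that $\varphi \equiv \psi$ abbreviates $(\varphi \implica \psi) \wedge (\psi \implica \varphi)$, so from the hypothesis $\varphi \equiv \psi$ we may extract, using only \BL-theorems and (MP), both $\varphi \implica \psi$ and $\psi \implica \varphi$ as theorems of the extended calculus. By symmetry (swapping $\varphi$ and $\psi$) it suffices in each case to prove one of the two implications making up the conclusion, e.g. $\square \varphi \implica \square \psi$ for (1) and $\lozenge \varphi \implica \lozenge \psi$ for (2).

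For (1): first apply the necessitation rule (Nec) to the theorem $\varphi \implica \psi$ to obtain $\vdash \square(\varphi \implica \psi)$. The key observation is the \BL-provable monotonicity fact $\vdash (\varphi \implica \psi) \implica \bigl((\square\varphi \implica \varphi) \implica (\square\varphi \implica \psi)\bigr)$ (a substitution instance of the transitivity/suffixing schema of basic logic); combined with (A1), i.e. $\square\varphi \implica \varphi$, and two applications of (MP), this gives $\vdash (\varphi \implica \psi) \implica (\square \varphi \implica \psi)$, hence $\vdash \square\varphi \implica \psi$ once more by (MP). Applying (Nec) yields $\vdash \square(\square\varphi \implica \psi)$, and then axiom (A3), $\square(\square\varphi \implica \psi) \equiv (\square\varphi \implica \square\psi)$, together with (MP), delivers $\vdash \square\varphi \implica \square\psi$. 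Running the same argument with $\psi \implica \varphi$ in place of $\varphi \implica \psi$ gives $\vdash \square\psi \implica \square\varphi$, and conjoining the two implications (a \BL-theorem) yields $\vdash \square\varphi \equiv \square\psi$.

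For (2): from $\vdash \varphi \implica \psi$ and the \BL-theorem $\psi \implica \lozenge\psi$ (which is $(\lozenge 1)$, derivable here since $(\lozenge 1)$ is among the H\'ajek axioms — or directly from (A2) as in the proof of (M\ref{M7})) one gets $\vdash \varphi \implica \lozenge\psi$ by transitivity and (MP). Apply (Nec): $\vdash \square(\varphi \implica \lozenge\psi)$. Now mirror the derivation of (M\ref{M23}): from (A2), $\square(\varphi \implica \square\chi) \equiv (\lozenge\varphi \implica \square\chi)$, one first derives the $\exists$-analogue $\vdash \square(\varphi \implica \lozenge\psi) \equiv (\lozenge\varphi \implica \lozenge\psi)$; this is exactly the syntactic content of (M\ref{M9}), obtainable by combining (A2) with the theorem $\square\lozenge\psi \equiv \lozenge\psi$ (the syntactic version of (M\ref{M6}), proved from (A4) exactly as in Lemma \ref{properties MBL}). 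Applying (MP) gives $\vdash \lozenge\varphi \implica \lozenge\psi$, and symmetry plus conjunction finish the proof.

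The only real obstacle is bookkeeping: every inequality used freely in Lemma \ref{properties MBL} (monotonicity of $\implica$, transitivity, $\psi \leq \lozenge\psi$, $\square\varphi \leq \varphi$, $\square\lozenge\psi = \lozenge\psi$) must be replaced by an explicit \BL-theorem together with (MP) or (Nec); none of these is deep, but the reader should be pointed to the fact that the propositional part of basic logic proves the needed suffixing/prefixing and conjunction-introduction schemata. I would therefore present the argument by first noting that the syntactic analogues of (M\ref{M6}), (M\ref{M7}), (M\ref{M9}) and (M\ref{M22})-(M\ref{M23}) are available in \ScpBL\ by transcribing the corresponding proofs in Lemma \ref{properties MBL}, and then assembling (1) and (2) in the few steps above.
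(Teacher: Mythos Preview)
Your argument is essentially correct and follows the same route as the paper: for (1), (A1) + transitivity + (Nec) + (A3); for (2), first establish $\square\lozenge\psi \equiv \lozenge\psi$ and $\psi \implica \lozenge\psi$, then transitivity + (Nec) + (A2). One slip to fix: you write that $(\lozenge 1)$ ``is among the H\'ajek axioms'', but the calculus under discussion is \ScpBL, whose only modal axioms are (A1)--(A5); so $\psi \implica \lozenge\psi$ is \emph{not} available as an axiom and must genuinely be derived. The paper does this in the order you sketch as the alternative: first prove $\square\lozenge\varphi \equiv \lozenge\varphi$ from (A4), (A1) \emph{and} part (1) (needed to replace $\lozenge\varphi$ by $\lozenge\varphi \vee \lozenge\varphi$ under $\square$), and then obtain $\psi \implica \lozenge\psi$ from (A2) and (A1). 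You should make the dependence on part (1) explicit in both places where you substitute $\lozenge\psi$ for $\square\lozenge\psi$ under a $\square$.
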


\begin{proof}
To prove $(1)$, note that, by (A1), $\vdash \square \varphi \implica \varphi$, hence, by transitivity of implication, $\varphi \implica \psi \vdash \square \varphi \implica \psi$. Using necessitation, $\varphi \implica \psi \vdash \square(\square \varphi \implica \psi)$, and using (A3), we get $\varphi \implica \psi \vdash \square \varphi \implica \square \psi$. By symmetry, we get $(1)$.

To prove $(2)$, first note that $\vdash \lozenge \varphi \equiv \square \lozenge \varphi$. Indeed, using basic logic theorems, (1), (A4) and (A1), the following equivalences are valid: $\square \lozenge \varphi \equiv \square( \lozenge \varphi \vee \lozenge \varphi) \equiv \lozenge \varphi \vee \square \lozenge \varphi \equiv \lozenge \varphi$.

Note also that $\vdash \psi \implica \lozenge \psi$. Indeed, since $\vdash \lozenge \psi \equiv \square \lozenge \psi$, we get that $\vdash \lozenge \psi \implica \square \lozenge \psi$. Using (A2), $\vdash \square(\psi \implica \square \lozenge \psi)$, and again using that $\vdash \lozenge \psi \equiv \square \lozenge \psi$ and $(1)$, we get that $\vdash \square(\psi \implica \lozenge \psi)$. Finally, using (A1), we obtain that $\vdash \psi \implica \lozenge \psi$.

Now, by transitivity, $\varphi \implica \psi \vdash \varphi \implica \lozenge \psi$. By necessitation, $\varphi \implica \psi \vdash \square(\varphi \implica \lozenge \psi)$. Now, using the equivalences $\square(\varphi \implica \lozenge \psi) \equiv \square (\varphi \implica \square \lozenge \psi) \equiv \lozenge \varphi \implica \square \lozenge \psi \equiv \lozenge \varphi \implica \lozenge \psi$, we get that $\varphi \implica \psi \vdash \lozenge \varphi \implica \lozenge \psi$. By symmetry, we conclude $(2)$.
\end{proof}

\section{Main subvarieties}

In this section we focus our study on three main subvarieties of
$\mathbb{MBL}$: monadic MV-algebras, monadic Gödel algebras and
monadic product algebras. These correspond naturally to the monadic
expansions of the most important extensions of Hájek's basic logic:
\L ukasiewicz logic, Gödel logic and product logic, respectively. In each of these subvarieties we will also give explicit descriptions of their totally ordered structures.

\subsection{Monadic MV-algebras}

MV-algebras are the equivalent algebraic semantics of the infinite-valued \L
ukasiewicz logic (see \cite{CDM00}). It is widely known that they
coincide with involutive BL-algebras. In other words, the variety of
MV-algebras is term-equivalent to the subvariety of $\mathbb{BL}$
determined by the equation $\neg \neg x \approx x$.

In his Ph.\ D. thesis, Rutledge defined and studied monadic
MV-algebras as a way to study the infinite-many-valued \L ukasiewicz
predicate calculus. More recently, in \cite{CiDV14}, we studied in depth
the lattice of subvarieties of these algebras.

In this section we will show that the variety defined by Rutledge
is term-equivalent to the subvariety of $\mathbb{MBL}$ determined by
the equation $\neg \neg x \approx x$.

Let us recall the original definition of monadic MV-algebra.

\begin{dfn}
An algebra $\mathbf{A} = \langle A, \oplus, \neg, \exists,  0
\rangle$ of type $(2,1,1,0)$ is called a \emph{monadic MV-algebra}
(an MMV-algebra for short) if $\langle A, \oplus, \neg, 0\rangle$ is
an MV-algebra and $\exists$ satisfies the following identities:
\begin{multicols}{2}
\begin{enumerate}[(MV1)]
\item\label{MMV1} $x \implica \exists x \approx 1$.
\item\label{MMV2} $ \exists (x\vee y)\approx \exists  x\vee \exists  y$.
\item \label{MMV3}$\exists \neg \exists  x \approx \neg \exists  x$.
\item \label{MMV4} $\exists (\exists x \oplus\exists y)\approx \exists x \oplus \exists y$.
\item\label{MMV5} $\exists (x* x)\approx \exists x*  \exists x$.
\item\label{MMV6} $\exists ( x \oplus x)\approx \exists x\oplus \exists x$.
\end{enumerate}
\end{multicols}
\end{dfn}

In an MMV-algebra $\mathbf{A}$, we define $\forall\colon A \rightarrow A$
by $\forall a =\neg\exists\neg a$, for every $a\in A$. Clearly,
$\exists a = \neg\forall\neg a$. In the following lemma we collect
some properties of MMV-algebras (see \cite{CiDV14}). Recall that on each
(monadic) MV-algebra $\mathbf{A}$ we can define the operations $*$,
$\implica$, $\wedge$ and $\vee$ as follows: $x*y: =\neg( \neg x
\oplus \neg y)$,  $x \implica y := \neg x \oplus y$, $x \wedge y :=
x * (x \implica y)$ and $x\vee y:= \neg (\neg x \oplus y)\oplus y$.

\begin{lem} \label{properties of MMV}
Let $\mathbf{A}=\langle A, \oplus, \neg, \exists,  0 \rangle$ be an
MMV-algebra. For every $a, b\in A$, the following properties hold:
\begin{enumerate}[$(1)$]
\item $\forall a \implica a = 1$.
\item $\forall \neg\forall a = \neg\forall a$.
\item $\forall(\exists a \vee b)= \exists a\vee \forall b$.
\item $\forall (a \implica b) \leq \forall a \implica \forall b$ or, equivalently, $\forall(\neg a \oplus b)\leq \neg\forall a \oplus \forall b$.
\item $\forall(a \wedge b) = \forall a \wedge \forall b$.
\item $\forall(\forall a \oplus \forall b)= \forall a \oplus \forall
b$.
\item $\forall (a * a) = \forall a * \forall a$.
\end{enumerate}
\end{lem}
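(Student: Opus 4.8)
The plan is to verify each of the seven properties for an arbitrary MMV-algebra $\mathbf{A}$, relying on the defining identities (MV1)--(MV6) for the existential quantifier together with the dual definition $\forall a = \neg\exists\neg a$, and on the standard MV-algebra arithmetic (De Morgan laws, residuation, and the term definitions of $*,\implica,\wedge,\vee$ in terms of $\oplus$ and $\neg$). Most of the work is a straightforward dualization: each statement about $\forall$ is obtained by negating an appropriate statement about $\exists$ applied to negated arguments. Concretely:

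First I would dispatch $(1)$, $(2)$, $(6)$ directly. For $(1)$, apply (MV\ref{MMV1}) to $\neg a$: $\neg a\implica\exists\neg a = 1$, which by residuation/contraposition in MV-algebras is equivalent to $\neg\exists\neg a\implica a = 1$, i.e. $\forall a\implica a = 1$. For $(2)$, note $\forall\neg\forall a = \neg\exists\neg\neg\exists\neg a = \neg\exists\exists\neg a$, and by (MV\ref{MMV1}) and the easy consequence $\exists\exists x = \exists x$ (which follows from (MV\ref{MMV1}) and (MV\ref{MMV3}) exactly as (M\ref{M14}) was derived), this equals $\neg\exists\neg a = \forall a$; alternatively, $(2)$ is literally (MV\ref{MMV3}) negated and rewritten. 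For $(6)$, observe $\forall a\oplus\forall b = \neg(\neg\forall a * \neg\forall b) = \neg(\exists\neg a * \exists\neg b)$; one shows $\exists(\exists\neg a * \exists\neg b) = \exists\neg a * \exists\neg b$ using (MV\ref{MMV5}) in the same way (M\ref{M21}) was obtained from (M\ref{M5}), and then negating gives $\forall(\neg(\exists\neg a*\exists\neg b)) = \neg(\exists\neg a*\exists\neg b)$, i.e. $\forall(\forall a\oplus\forall b) = \forall a\oplus\forall b$. (Here I also need $\exists(x*y)\leq\exists x*\exists y$-type lemmas from \cite{CiDV14}, which I am allowed to cite.)

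Next, for $(3)$ I would dualize (MV\ref{MMV2}): compute $\forall(\exists a\vee b) = \neg\exists\neg(\exists a\vee b) = \neg\exists(\neg\exists a\wedge\neg b)$. The point is that $\neg\exists a = \forall\neg a \in \forall A = \exists A$, so this has the shape $\neg\exists(c\wedge\neg b)$ with $c\in\exists A$; using the MMV-analogue of (M\ref{M32}) ($\exists(c\wedge y) = c\wedge\exists y$ for $c$ in the range of $\exists$) gives $\neg(\neg\exists a\wedge\exists\neg b) = \exists a\vee\neg\exists\neg b = \exists a\vee\forall b$. For $(4)$, from $b\leq a$ impossible in general, I instead argue: $a\implica b = \neg a\oplus b \leq \neg\forall a\oplus b$ (since $\forall a\leq a$ gives $\neg a\leq\neg\forall a$), hence $\forall(a\implica b)\leq\forall(\neg\forall a\oplus b)$; now $\neg\forall a = \exists\neg a\in\exists A$, so $\forall(\exists\neg a\oplus b)$ can be handled via $(3)$-type reasoning after rewriting $\oplus$, landing at $\neg\forall a\oplus\forall b = \forall a\implica\forall b$ — this mirrors the derivation of (M\ref{M22}). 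Then $(5)$ follows from $(4)$ exactly as (M\ref{M37}) followed from (M\ref{M36})/(M\ref{M22}): the inequality $\forall a\wedge\forall b\leq\forall(a\wedge b)$ comes from writing $a\wedge b = a*(a\implica b)$ and using $(4)$ plus the fact that $\forall$ commutes with $*$ on elements of its range; the reverse inequality is monotonicity. Finally $(7)$ is the dual of (MV\ref{MMV6}): $\forall a*\forall a = \neg(\neg\forall a\oplus\neg\forall a) = \neg(\exists\neg a\oplus\exists\neg a) = \neg\exists(\neg a\oplus\neg a)$ by (MV\ref{MMV6}), and $\neg a\oplus\neg a = \neg(a*a)$, so this is $\neg\exists\neg(a*a) = \forall(a*a)$.

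The main obstacle is not any single computation but keeping the bookkeeping of negations and the $\exists$/$\forall$ duality straight, together with making sure that the handful of auxiliary facts I invoke — namely $\exists\exists x = \exists x$, $\exists A = \forall A$, the commutation of $\exists$ with $\wedge$ (resp.\ of $\forall$ with $*$) on elements of the range, and the one-sided inequalities $\exists(x*y)\leq\exists x*\exists y$ — are all genuinely available either as cited results from \cite{CiDV14} or as short consequences of (MV1)--(MV6) proved by the same arguments already used for the analogous MBL-properties in Lemma \ref{properties MBL}. Since this lemma is explicitly attributed to \cite{CiDV14} (``see \cite{CiDV14}''), the cleanest route is in fact simply to cite that reference for the proof and only sketch the above dualizations; I would present at most the proofs of $(1)$, $(3)$ and $(7)$ in full as representative cases and refer the reader to \cite{CiDV14} for the rest.
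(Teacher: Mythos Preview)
Your proposal aligns with the paper's treatment: the paper gives no proof at all for this lemma and simply refers the reader to \cite{CiDV14}, which is precisely the ``cleanest route'' you identify at the end. Your optional sketches of the dualizations are a reasonable supplement; just note that your displayed computation for $(2)$ slips (as written, $\neg\exists\neg\neg\exists\neg a = \neg\exists\exists\neg a$ evaluates to $\forall a$ rather than $\neg\forall a$ --- the correct chain is $\forall\neg\forall a = \forall(\exists\neg a) = \neg\exists\neg\exists\neg a$, then apply (MV\ref{MMV3})), but your immediate alternative ``$(2)$ is literally (MV\ref{MMV3}) negated and rewritten'' is correct and suffices.
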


We collect here two properties of MV-algebras that will be useful
later.

\begin{lem}\label{properties MV}
The following properties hold true in any $MV$-algebra
$\mathbf{A}=\langle A, \oplus, \neg, 0\rangle$, where $a, b$ denote
arbitrary elements of $A$:
\begin{enumerate}[$(1)$]
\item $a \oplus b = (a \implica (a * b)) \implica b$.
\item $(a\implica b)^2 \implica (2 a \implica 2 b) =1$.
\end{enumerate}
\end{lem}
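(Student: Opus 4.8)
The plan is to derive both identities from well-known arithmetic in MV-algebras. Throughout I will use freely the standard facts (see \cite{CDM00}, and recall the term definitions of $*,\implica,\wedge,\vee$ given just before the statement): $\neg\neg x = x$, $x \implica y = \neg x \oplus y$, $\neg(x \implica y) = x * \neg y$, $\neg(x * y) = \neg x \oplus \neg y$, $x \wedge y = x * (x \implica y)$, the contraposition law $x \implica y = \neg y \implica \neg x$, the fact that $\neg x \oplus x = 1$, the residuated-lattice law $(x \vee y) \implica z = (x \implica z) \wedge (y \implica z)$, and that $\oplus$ distributes over $\wedge$.

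For $(1)$ I would start from $(a \implica (a * b)) \implica b = \neg(a \implica (a*b)) \oplus b$ and compute the negation on the right: $\neg(a \implica (a*b)) = a * \neg(a*b) = a * (\neg a \oplus \neg b) = a * (a \implica \neg b) = a \wedge \neg b$. Substituting back, the left-hand side of $(1)$ equals $(a \wedge \neg b) \oplus b$; distributing $\oplus$ over $\wedge$ turns this into $(a \oplus b) \wedge (\neg b \oplus b) = (a \oplus b) \wedge 1 = a \oplus b$, which is what we want. (An essentially equivalent route first establishes $a \implica (a*b) = \neg a \vee b$ — because its negation is $a\wedge\neg b$ — and then uses $(\neg a \vee b) \implica b = (\neg a \implica b) \wedge (b \implica b) = (a \oplus b) \wedge 1$.)

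For $(2)$, note first that $(a\implica b)^2 \implica (2a \implica 2b) = 1$ is equivalent to $(a\implica b)^2 \leq 2a \implica 2b$. I would then rewrite both sides with $\neg$: from $2x = x \oplus x = \neg\bigl((\neg x)^2\bigr)$ together with contraposition, $2a \implica 2b = \neg\bigl((\neg a)^2\bigr) \implica \neg\bigl((\neg b)^2\bigr) = (\neg b)^2 \implica (\neg a)^2$, and likewise $(a \implica b)^2 = (\neg b \implica \neg a)^2$. Setting $p = \neg b$ and $q = \neg a$, the claim reduces to the purely monoidal inequality $(p \implica q)^2 \leq p^2 \implica q^2$, which holds in any commutative residuated lattice: from $(p \implica q) * p \leq q$ and monotonicity of $*$ one gets $(p \implica q)^2 * p^2 = \bigl((p \implica q) * p\bigr)^2 \leq q^2$, and residuation finishes it. I do not expect a genuine obstacle; the only point requiring care is to have the right form of a few basic MV identities at hand — in particular $\neg(x \implica y) = x * \neg y$ and $2x = \neg\bigl((\neg x)^2\bigr)$ — so that $(1)$ collapses to the distributivity of $\oplus$ over $\wedge$ and $(2)$ collapses to the power inequality $(p \implica q)^2 \leq p^2 \implica q^2$.
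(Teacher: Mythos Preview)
Your proof is correct in both parts; each step is a straightforward application of standard MV-algebra arithmetic (De Morgan, contraposition, the definition of $\wedge$, distributivity of $\oplus$ over $\wedge$, and residuation). The paper itself states this lemma without proof, treating the identities as routine facts, so there is no alternative argument to compare against.
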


We now turn to derive some useful properties of involutive monadic
BL-algebras. Some of the proofs are inspired by syntactic proofs given by Hájek in \cite{Hajek98libro}. On each BL-algebra $\mathbf{A}$ we define the
operations $\neg$ and $\oplus$ as follows: $\neg x: = x \implica 0$
and $x \oplus y := \neg x \implica y$.

\begin{lem}\label{properties MBL-MV}
The following properties hold in any MBL-algebra $\mathbf{A}$ for every $a \in A$: 
\begin{enumerate}[$(1)$]
\item\label{p101} $\forall \neg \forall a = \neg \forall a$.
\item\label{p102} $\neg \exists a = \forall \neg a$.
\end{enumerate}
Moreover, if $\neg \neg a = a$ for every $a \in A$, the following also hold for arbitrary $a, b$ in $A$:
\begin{enumerate}[$(1)$] \setcounter{enumi}{2}
\item\label{103} $\neg\forall a = \exists\neg a$.
\item\label{104} $a \oplus b = (\forall(a*b)\oplus \neg a)\implica
b$.
\item\label{105} $\forall(a*\exists b)= \forall a * \exists b$.
\item\label{106} $\exists(\exists a \implica b) = \exists a \implica \exists
b$.
\end{enumerate}
\end{lem}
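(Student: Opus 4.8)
The plan is to prove the six properties in order, leaning heavily on Lemma~\ref{properties MBL} (the large collection of monadic identities) and, for the involutive parts, on the standard MV-algebraic arithmetic recorded in Lemma~\ref{properties MV} together with the interplay $\neg x = x \implica 0$, $x \oplus y = \neg x \implica y$, $x * y = \neg(\neg x \oplus \neg y)$.

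First I would dispatch (1) and (2), which hold in every MBL-algebra with no involutivity assumption. For (1), $\forall \neg \forall a = \neg \forall a$: apply (M33) (or directly (M3) with $b = 0$), namely $\forall(\forall a \implica 0) = \forall a \implica \forall 0 = \forall a \implica 0 = \neg \forall a$, using (M13) that $\forall 0 = 0$. For (2), $\neg \exists a = \forall \neg a$: this is (M8) specialized to $b = 0$, giving $\forall(\exists a \implica 0) = \exists a \implica \forall 0 = \exists a \implica 0 = \neg \exists a$; that is, $\forall \neg a$ on the left equals $\neg \exists a$ once we note $\exists a \implica 0 = \forall(\exists a \implica 0)$ by (M15) or (M8) and that $\forall \neg a = \forall(a \implica 0) = \exists a \implica 0$ by the first displayed consequence in the Remark after Lemma~\ref{properties MBL} (i.e. $\forall(a \implica c) = \exists a \implica c$ for $c \in \forall A$, and $0 \in \forall A$). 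Actually the cleanest route: $\forall \neg a = \forall(a \implica 0)$, and since $0 = \exists 0$ this is $\forall(a \implica \exists 0) = \exists a \implica \exists 0 = \exists a \implica 0 = \neg \exists a$ by (M9) and (M13).

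Next, assume $\neg \neg x = x$ for all $x$. Property (3), $\neg \forall a = \exists \neg a$, follows from (2) by substituting $\neg a$ for $a$: $\neg \exists \neg a = \forall \neg \neg a = \forall a$, then apply $\neg$ to both sides and use involutivity. For (5), $\forall(a * \exists b) = \forall a * \exists b$: one inclusion is (M17)-monotonicity applied after noting $\forall a * \exists b \leq a * \exists b$ (since $\forall a \leq a$) together with the fact that $\forall a * \exists b \in \forall A$ — indeed $\forall a \in \forall A = \exists A$ and $\exists b \in \exists A$, so $\forall a * \exists b \in \exists A = \forall A$ is fixed by $\forall$ by (M14); hence $\forall a * \exists b = \forall(\forall a * \exists b) \leq \forall(a * \exists b)$. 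For the reverse inequality I would use residuation: it suffices to show $\forall(a * \exists b) * \neg(\forall a * \exists b)$-type manipulations, or more directly that $\forall(a*\exists b) \leq \forall a * \exists b$; here I expect to need involutivity to convert $*$ into $\oplus$ and $\neg$, writing $\forall a * \exists b = \neg(\neg \forall a \oplus \neg \exists b) = \neg(\exists \neg a \oplus \forall \neg b)$ using (3) and (2), and then comparing with $\forall(a * \exists b) = \forall \neg(\neg a \oplus \neg b)$ — it will be convenient to reduce (5) to the dual statement $\exists(\neg a \oplus \forall c) = \exists \neg a \oplus \forall c$ which is essentially (M27) $\exists(x * \forall y) = \exists x * \forall y$ read through the involution. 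Property (6), $\exists(\exists a \implica b) = \exists a \implica \exists b$: the inequality $\geq$ comes from $\exists a \implica \exists b = \forall(\exists a \implica \exists b) \leq$ and monotonicity after $\exists a \implica \exists b \geq \exists a \implica b$ is the wrong direction — instead, $\exists a \implica b \leq \exists a \implica \exists b$ so $\exists(\exists a \implica b) \leq \exists(\exists a \implica \exists b) = \exists a \implica \exists b$ by (M29); and the reverse, $\exists a \implica \exists b \leq \exists(\exists a \implica b)$, I would get by rewriting $\exists a \implica b = \neg \exists a \oplus b = \forall \neg a \oplus b$ (using (2)) and applying (5)-type pushing of $\forall$ past $\oplus$ under $\exists$, or by dualizing (M26) $\exists(a * \exists b) = \exists a * \exists b$. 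Finally, (4), $a \oplus b = (\forall(a*b) \oplus \neg a) \implica b$: since $\oplus$ and $\implica$ are definable and $\neg\neg = \mathrm{id}$, this is an MV-identity relating $a \oplus b$ to $a * b$; by Lemma~\ref{properties MV}(1) we have $a \oplus b = (a \implica (a*b)) \implica b$, and $a \implica (a * b) = \neg a \oplus (a*b)$, so the claim reduces to showing $\forall(a*b) \oplus \neg a = \neg a \oplus (a*b)$ as elements governing the same implication into $b$ — which will not be a literal equality but rather will follow because both sides, when used as the antecedent of $\implica b$, yield the same value; the key point is that $(a*b) \implica b = \neg a \oplus$ -adjustments make the $\forall$ harmless. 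Concretely, I expect to show $(\forall(a*b) \oplus \neg a) \implica b = (\ (a*b) \oplus \neg a\ ) \implica b$ by proving the two antecedents differ only by an element $\geq$ something that $\implica b$ collapses, or more cleanly by first establishing $(a*b) \oplus \neg a \geq \forall(a*b) \oplus \neg a \geq \forall((a*b)\oplus \neg a)$ and that $\forall((a*b) \oplus \neg a) \implica b = ((a*b) \oplus \neg a) \implica b$ via (M3)/(M33) since $b$ can be replaced by $\forall b$-arguments...

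The main obstacle I anticipate is property (4): it is the only one that is genuinely an MV-arithmetic identity intertwined with the quantifier rather than a direct rewriting of a Lemma~\ref{properties MBL} item, so I would be careful to use Lemma~\ref{properties MV}(1) (and possibly (2)) to eliminate $\oplus$ in favor of $*$ and $\implica$, reducing everything to an identity that, after pushing $\forall$ around using (M3), (M9), (M27) and (M33), becomes trivial; I would expect the proof of (4) to occupy the bulk of the argument while (1)--(3), (5) and (6) are each two or three lines. Throughout, the only subtlety in (5) and (6) is making sure the elements to which $\forall$ or $\exists$ is applied genuinely lie in $\forall A = \exists A$ so that (M14) applies, together with the closure of $\forall A$ under the BL operations from Lemma~\ref{LEMA: para todo A es subalgebra}.
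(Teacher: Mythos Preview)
Your treatment of (1)--(3) is correct and matches the paper. Your difficulty assessment, however, is inverted: (4) is short and (5) is the substantial step, with (6) derived from (5).

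For (4), the paper's argument is a two-line sandwich that you almost stumble onto but don't state cleanly: from $\neg a \leq \forall(a*b)\oplus \neg a \leq (a*b)\oplus \neg a$ one gets, applying $(\cdot)\implica b$ antitone in the first variable, $((a*b)\oplus \neg a)\implica b \leq (\forall(a*b)\oplus \neg a)\implica b \leq \neg a \implica b = a \oplus b$, and the leftmost term equals $a\oplus b$ by Lemma~\ref{properties MV}(1). No quantifier identities beyond $\forall(a*b) \le a*b$ are needed.

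For (5), your proposed reduction is circular. Applying the involution to $\forall(a*\exists b)=\forall a*\exists b$ yields $\exists(u\oplus\forall v)=\exists u\oplus\forall v$, which is \emph{not} (M\ref{M27}) read through the involution: negating (M\ref{M27}) produces $\forall(u\oplus\exists v)=\forall u\oplus\exists v$, a different statement. In fact the identity $\exists(u\oplus\forall v)=\exists u\oplus\forall v$ is equivalent (under $\neg\neg=\mathrm{id}$) to $\exists(w\implica\forall v)=\forall w\implica\forall v$, which is nowhere in Lemma~\ref{properties MBL} and is essentially (5) again. The paper's proof of (5) is genuinely more work: after the easy inequality $\forall a*\exists b\leq\forall(a*\exists b)$, the reverse is obtained from prelinearity $(\neg\exists b\implica a)\vee(a\implica\neg\exists b)=1$, rewriting the first disjunct via (4), applying $\forall$ with (M\ref{M12}), and then a chain of (M\ref{M8})/(M\ref{M3}) reductions to show each disjunct is $\leq \forall(a*\exists b)\implica(\forall a*\exists b)$. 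This is where (4) earns its keep.

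For (6), your direction $\exists(\exists a\implica b)\leq\exists a\implica\exists b$ via (M\ref{M29}) is fine; the reverse is obtained in the paper by negating, using (2) and (3) to pass to $\forall(\neg b*\exists a)$, then invoking (5) --- so (6) really does depend on (5), and your ``dualizing (M\ref{M26})'' hint does not bypass it.
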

\begin{proof}
\begin{enumerate}[(1)]
\item By (M\ref{M3}), $\forall \neg \forall a = \forall(\forall a\implica 0)= \forall a \implica \forall 0= \forall a \implica 0= \neg \forall
a$.
\item By (M\ref{M9}), $\neg \exists a= \exists a \implica 0 = \exists a \implica \exists 0= \forall(a \implica \exists 0)= \forall \neg
a$.
\item From (2), $\neg \forall a=\neg\forall\neg\neg a = \neg\neg\exists\neg a = \exists\neg
a$.
\item Since $\neg a \leq \forall(a*b)\oplus \neg a$, we have that $\neg a \implica b \geq (\forall(a*b)\oplus \neg a)\implica
b$. On the other hand, using that $\forall(a*b)\oplus\neg a
\leq (a*b) \oplus\neg a$, we get $(\forall(a*b)\oplus\neg a)\implica
b \geq ((a*b) \oplus\neg a)\implica b$ and   $(\neg a \implica
b)\implica ((\forall(a*b)\oplus \neg a) \implica b) \geq (\neg a
\implica b)\implica (((a*b)\oplus \neg a) \implica b)= 1$ from Lemma
\ref{properties MV} (1).
\item Since $\forall a * \exists b \leq a * \exists b$, then $\forall(\forall a * \exists b) \leq \forall(a * \exists
b)$. Thus  $\forall a * \exists b \leq \forall(a * \exists b)$. On
the other hand, we intend to see that $\forall(a * \exists b)
\implica (\forall a * \exists b) = 1$. First observe that $(\neg
\exists b \implica a)\vee(a\implica \neg\exists b)=1$ then, by
(\ref{104}), $1 = ((\forall(a*\exists b)\oplus \neg\exists
b)\implica a)\vee(a\implica \neg\exists b)$. Since $(a\implica
\neg\exists b)\implica \neg\forall(a*\exists b)= \neg(a*\exists
b)\implica \neg\forall(a*\exists b)=1$, we have that
$((\forall(a*\exists b)\oplus \neg\exists b)\implica a)\vee
\neg\forall(a*\exists b)=1$ and $1 = \forall(((\forall(a*\exists
b)\oplus \neg\exists b)\implica a)\vee \neg\forall(a*\exists b))=
\forall((\forall(a*\exists b)\oplus \neg\exists b)\implica a)\vee
\neg\forall(a*\exists b)$, from (M\ref{M12}).

We claim that $\forall((\forall(a*\exists b)\oplus \neg\exists b)\implica a) \leq \forall(a * \exists b) \implica (\forall a * \exists b)$. From this and the fact that $\neg\forall(a*\exists b) \leq \forall(a * \exists b) \implica (\forall a * \exists b)$ it follows immediately that $\forall(a * \exists b) \implica (\forall a * \exists b) = 1$.

It only remains to show that $\forall((\forall(a*\exists b)\oplus \neg\exists b)\implica a) \leq \forall(a * \exists b) \implica (\forall a * \exists b)$. Indeed, using (M\ref{M8}) and (M\ref{M3}), we have that
$\forall((\forall(a*\exists b)\oplus \neg\exists b)\implica a) =
\forall((\exists b \implica \forall (a*\exists b))\implica a)=
\forall(\forall(\exists b \implica (a*\exists b))\implica a)=
\forall(\exists b\implica (a*\exists b))\implica \forall a= (\exists
b\implica \forall(a*\exists b))\implica \forall a$. Since $\forall a
\leq \exists b \implica (\forall a * \exists b)$, it follows that $(\exists b \implica
\forall(a*\exists b))\implica \forall a \leq (\exists b \implica
\forall(a*\exists b))\implica (\exists b \implica (\forall a
*\exists b))= (\exists b*(\exists b \implica \forall(a*\exists
b)))\implica (\forall a
*\exists b) = (\exists b \wedge \forall(a*\exists b))\implica
(\forall a *\exists b)= \forall(a*\exists b)\implica  (\forall a
*\exists b)$.

\item By (\ref{p102}), (\ref{105}), and (\ref{103}),  $\neg \exists (\exists a \implica b) = \forall\neg (\exists a \implica b)=
\forall ((\neg b \implica (\exists a\implica 0))\implica 0) =
\forall (\neg b * \exists a)= \forall\neg
b*\exists a= \neg((\forall\neg b*\exists a)\implica 0)= \neg(\exists
a\implica (\forall \neg b \implica 0))= \neg(\exists a \implica
\neg\forall \neg b) = \neg(\exists a
\implica \exists b)$, then $\exists(\exists a \implica b) = \exists a \implica \exists b$. \qedhere
\end{enumerate}
\end{proof}

We now prove the main theorem of this subsection.

\begin{thm} \label{cuando una MBL es MMV}
The subvariety of $\mathbb{MBL}$ determined by the equation $\neg
\neg x \approx x$ is term-equivalent to the variety of MMV-algebras.
\end{thm}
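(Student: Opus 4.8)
The plan is to establish a term-equivalence in both directions, matching the two signatures $\langle \vee,\wedge,*,\implica,\exists,\forall,0,1\rangle$ (restricted to the subvariety $\mathbb{MBL}_{\neg\neg}$ where $\neg\neg x\approx x$) and $\langle\oplus,\neg,\exists,0\rangle$. Recall that MV-algebras are term-equivalent to involutive BL-algebras via the standard definitions $x\oplus y:=\neg x\implica y$, $x*y:=\neg(\neg x\oplus\neg y)$, $\neg x:=x\implica 0$, and conversely $x\implica y:=\neg x\oplus y$; so the propositional part of the correspondence is already known. What must be checked is that (i) the monadic operators translate correctly, and (ii) the defining identities match up. Concretely, I would send $\langle\mathbf A,\exists,\forall\rangle\in\mathbb{MBL}_{\neg\neg}$ to the MMV-algebra with the same $\exists$ (and $\forall$ being definable, since by Lemma~\ref{properties MBL-MV}(3), $\forall a=\neg\exists\neg a$), and send an MMV-algebra $\mathbf A$ to the structure with the given $\exists$ and with $\forall a:=\neg\exists\neg a$, the latter being exactly the $\forall$ already introduced in the MMV setting.

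The core of the argument is verifying the two lists of axioms. For the direction $\mathbb{MBL}_{\neg\neg}\to\mathbb{MMV}$: (MV1) is just (M\ref{M7}); (MV2) is (M\ref{M20}); (MV3) $\exists\neg\exists x\approx\neg\exists x$ follows from Lemma~\ref{properties MBL-MV}(2) together with (M\ref{M11}), since $\exists\neg\exists x=\neg\forall\forall\neg x$... more directly, $\neg\exists x=\forall\neg x\in\forall A$, and every element of $\exists A=\forall A$ is a fixed point of $\exists$ by (M\ref{M19}); (MV5) is exactly (M\ref{M5}); (MV4) $\exists(\exists x\oplus\exists y)\approx\exists x\oplus\exists y$ amounts to saying $\exists A$ is closed under $\oplus$ and its elements are $\exists$-fixed — since $\exists A=\forall A$ is a BL-subalgebra (Lemma~\ref{LEMA: para todo A es subalgebra}) it is closed under $\implica$ and $0$, hence under $\oplus$, and again (M\ref{M19}) applies; (MV6) $\exists(x\oplus x)\approx\exists x\oplus\exists x$ is the dual of (M\ref{M5}) under $\neg$, which should follow from Lemma~\ref{properties MBL-MV}(2) applied to $\neg(x\oplus x)=\neg x*\neg x$: $\neg\exists(x\oplus x)=\forall\neg(x\oplus x)=\forall(\neg x*\neg x)$, and one needs $\forall(\neg x*\neg x)=\forall\neg x*\forall\neg x$ — but wait, that identity fails in general MBL-algebras (see the Remark with $\mathbf L_3\oplus\mathbf L_2$); however in the involutive case we have Lemma~\ref{properties MMV}(7), or rather we must derive it, and in fact $\forall(a*a)=\forall a*\forall a$ does hold in involutive MBL-algebras, which I would prove using (M\ref{M5}) via $\neg$: $\forall(a*a)=\neg\exists\neg(a*a)=\neg\exists(\neg a\oplus\neg a)$, so this is circular unless one first establishes $\exists(z\oplus z)=\exists z\oplus\exists z$ independently. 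The clean route is to prove (MV6) directly inside $\mathbb{MBL}_{\neg\neg}$: from Lemma~\ref{properties MBL-MV} and Lemma~\ref{properties MV}(2) one gets $(a\implica b)^2\implica(2a\implica 2b)=1$, and combining with (M\ref{M5})-type manipulations on $\exists$ yields the doubling identity; this is the step I expect to cost the most bookkeeping.

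For the reverse direction $\mathbb{MMV}\to\mathbb{MBL}_{\neg\neg}$, I must check (M\ref{M1})--(M\ref{M5}) from (MV1)--(MV6) and the MV-axioms, using the translated operations and $\forall a=\neg\exists\neg a$. Here (M\ref{M1}) is Lemma~\ref{properties of MMV}(1); (M\ref{M4}) is Lemma~\ref{properties of MMV}(3); (M\ref{M5}) is (MV5) verbatim. The nontrivial ones are (M\ref{M2}) $\forall(x\implica\forall y)\approx\exists x\implica\forall y$ and (M\ref{M3}) $\forall(\forall x\implica y)\approx\forall x\implica\forall y$. For (M\ref{M3}): $\forall(\forall x\implica y)=\neg\exists\neg(\forall x\implica y)=\neg\exists(\forall x*\neg y)=\neg\exists(\neg\exists\neg x*\neg y)$; using (MMV3)/(MMV4)-style closure, $\neg\exists\neg x\in\exists A$, so by a product-absorption property of $\exists$ on its image ($\exists(c*z)=c*\exists z$ for $c\in\exists A$ — this is the MMV analogue of (M\ref{M26})/(M\ref{M27}), derivable in MMV-algebras and essentially Lemma~\ref{properties MMV}), we reduce to $\neg(\forall x*\exists\neg y)=\neg(\forall x*\neg\forall y)=\forall x\implica\forall y$ as wanted. (M\ref{M2}) is handled symmetrically. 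So the real obstacle is organizing the MMV-side lemmas about the behaviour of $\exists$ on its image and about products $c*z$ with $c\in\exists A$ — I would isolate these as preliminary claims (most are quoted or easily derived from Lemma~\ref{properties of MMV} and \cite{CiDV14}), and then the axiom checks become short computations. Finally one notes that the two translations are mutually inverse term operations: starting from $\langle\vee,\wedge,*,\implica,\exists,\forall\rangle$, defining $\oplus,\neg$, then re-deriving $\implica=\neg(\cdot)\oplus(\cdot)$, $\wedge,\vee,*$ and $\forall=\neg\exists\neg$ returns the original operations — this uses $\neg\neg x\approx x$ and Lemma~\ref{properties MBL-MV}(3) — which closes the term-equivalence.
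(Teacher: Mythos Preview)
Your outline is correct and follows essentially the same route as the paper: the same pair of translation maps, the same identification of the easy axioms, and the same recognition that (MV6) is the crux of the forward direction. The paper's proof of (MV6) uses exactly the tools you point to --- Lemma~\ref{properties MV}(2) applied with $\exists a$ and $a$, then (M\ref{M5}), and then the specific item~(6) of Lemma~\ref{properties MBL-MV}, namely $\exists(\exists a\implica b)=\exists a\implica\exists b$, which instantiated at $b=a$ gives $\exists(\exists a\implica a)=1$ and yields $(\exists a\oplus\exists a)\implica\exists(a\oplus a)=1$; you should make that item explicit rather than citing the lemma wholesale, since it is precisely the place where the involutivity hypothesis does the work.

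For the reverse direction your verification of (M3) via the product-absorption identity $\exists(c*z)=c*\exists z$ for $c\in\exists A$ is a slight detour: that identity does hold in MMV-algebras but is not among the items of Lemma~\ref{properties of MMV}, so you would have to derive it first. The paper instead argues both (M2) and (M3) by monotonicity together with the fact that elements of $\forall A$ are $\forall$-fixed: for (M3), $\forall(\forall a\implica b)\leq\forall\forall a\implica\forall b=\forall a\implica\forall b$ by Lemma~\ref{properties of MMV}(2),(3), and conversely $\forall a\implica\forall b=\forall(\forall a\implica\forall b)\leq\forall(\forall a\implica b)$ since $\forall b\leq b$; (M2) is done in the same spirit using the contrapositive. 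Either path works, but the paper's avoids the extra auxiliary claim.
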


\begin{proof}
Consider first an MBL-algebra $\mathbf{A} = \langle A, \vee, \wedge, *, \to, \exists, \forall, 0, 1\rangle$ that satisfies the equation $\neg \neg x \approx
x$. We define $\mathbf{A}' = \langle A, \oplus, \neg, \exists,
0\rangle$ where $\neg x := x \implica 0$ and $x \oplus y := \neg x \to
y$. We claim that $\mathbf{A}'$ is an MMV-algebra. Indeed,
conditions (MV1), (MV2), and (MV5) are precisely (M\ref{M7}),
(M\ref{M20}), and (M\ref{M5}), respectively. Condition (MV3) follows from $\exists \neg
\exists a = \exists(\exists a \implica 0)= \exists (\exists a
\implica \exists 0)= \exists a \implica \exists 0 =
 \exists a \implica 0 = \neg \exists a$. From (MV\ref{MMV3}) and (M\ref{M29})  we have that 
$\exists (\exists a \oplus \exists b)= \exists (\neg \exists a
\implica \exists b) = \exists (\exists \neg \exists a \implica
\exists b)= \exists \neg \exists a \implica \exists b= \neg \exists
a \implica \exists b= \exists a \oplus \exists b$; hence, (MV4)
holds. Finally, we prove (MV6).  On the one hand, since $a \oplus a \leq \exists a \oplus \exists a$, we get $\exists(a \oplus a) \leq \exists(\exists a \oplus \exists a) = \exists a \oplus \exists a$.  On the other hand, from Lemma \ref{properties MV} (2), $(\exists a \implica a)^2 \leq (\exists a\oplus \exists a)\implica (a\oplus
 a)$. Then $\exists((\exists a \implica a)^2) \leq \exists((\exists a\oplus \exists a)\implica (a\oplus
 a))$. By (M\ref{M5}), we have that $(\exists(\exists a \implica a))^2 \leq \exists((\exists a\oplus \exists a)\implica (a\oplus
 a))$. From Lemma \ref{properties MBL-MV} (\ref{106}), we conclude that $1 = \exists((\exists a\oplus \exists a)\implica (a\oplus
 a)) = \exists(\exists(\exists a\oplus \exists a)\implica (a\oplus
 a))= (\exists a\oplus \exists a)\implica \exists(a\oplus
 a)$.

Conversely, let $\mathbf{B} = \langle B, \oplus, \neg, \exists,
0\rangle$ be an MMV-algebra and define $\mathbf{B}' = \langle B, \vee, \wedge, *, \to, \exists, \forall, 0, 1\rangle$, where $1 :=
\neg 0$, $x \implica y := \neg x \oplus y$, $\forall x := \neg \exists
\neg x$, $x * y := \neg(\neg x \oplus \neg y)$, $x \vee y :=
\neg(\neg x \oplus y) \oplus y$, $x \wedge y := x * (x \implica y)$. We
claim that $\mathbf{B}'$ is an MBL-algebra that satisfies the
equation $\neg \neg x \approx x$. Indeed, conditions (M\ref{M1}),
(M\ref{M4}) and (M\ref{M5}) are found in Lemma \ref{properties of
MMV} (1), Lemma \ref{properties of MMV} (3), and (MV\ref{MMV5}),
respectively. To prove condition (M\ref{M2}), note that, since $a \leq \exists a$, $\exists a \implica \forall b \leq a \implica \forall b$. Thus $\exists a \implica \forall b = \forall(\exists a \implica \forall b) \leq \forall(a \implica \forall b)$. On the
other hand, $\forall(a \implica \forall b) = \forall(\neg \forall b \implica \neg a) \leq \forall \neg \forall b \implica \forall \neg a = \neg \forall b \implica \neg \exists a = \exists a \implica \forall b$. It only remains to prove condition (M3). By Lemma \ref{properties of MMV}, $\forall(\forall a \implica b) \leq \forall a \implica
\forall b$. In addition, since $\forall a \implica \forall b \leq \forall a \implica b$, it follows that $\forall a \implica \forall b = \forall(\forall a \implica \forall b) \leq \forall(\forall a \implica b)$.
\end{proof}

It would be interesting to characterize all monadic BL-chains and we
will do this in the last section of this article. The problem for
monadic MV-algebras is already solved (see \cite{DNGr04}), but we give here
an elementary proof.

\begin{thm} \label{TEO: en las MMV cadenas los cuantifs son triviales}
Let $\mathbf{A}$ be a totally ordered MMV-algebra. Then $\exists a =
a$ for every $a \in A$, that is, the quantifier on $\mathbf{A}$ is
the identity.
\end{thm}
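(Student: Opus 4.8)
The plan is to show that for a totally ordered MMV-algebra $\mathbf A$, the subalgebra $\exists \mathbf A = \forall \mathbf A$ must coincide with all of $A$; since $\exists a$ is the least element of $\exists A$ above $a$ (by Lemma~\ref{properties MBL}, carried over to the MV-setting via $\forall a = \neg\exists\neg a$ and $\exists a = \min\{c \in \exists A : c \ge a\}$), this immediately forces $\exists a = a$ for all $a$. So the heart of the matter is: in a chain, the $m$-relatively complete subalgebra $\exists \mathbf A$ is the whole algebra.

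First I would recall that $\exists\mathbf A$ is an $m$-relatively complete subalgebra of the MV-chain $\mathbf A$ (this follows from Theorem~\ref{TEO: caracterizacion de la imagen del cuantificador} applied to the underlying BL-reduct, after noting that $\forall A = \exists A$ by (M\ref{M6}) and (M\ref{M11})). The key structural fact I want to exploit is condition (s2$_\ell''$): since $\mathbf A$ is a chain, for any $c \in \exists A$ and $a \in A$ with $c \vee a = 1$, we must have $c = 1$ or $a = 1$. Now take an arbitrary $a \in A$ with $a \ne 1$ and consider $\exists a \in \exists A$. Since $a \le \exists a$, in a chain we have $a \vee \exists a = \exists a$, which is not directly of the needed form, so I would instead work with complements: in an MV-algebra, $\neg a \oplus a = 1$, i.e. $\neg a \vee a \le \neg a \oplus a$ gives a covering $1 = \neg(\exists a) \oplus \exists a$ trivially inside $\exists A$, which is not enough either. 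The productive move is to use that $\forall a \le a \le \exists a$ and that $\exists A$ is a subalgebra: if $\exists a \ne a$ then $\forall(\neg a) = \neg \exists a \ne \neg a$ as well, so it suffices to rule out the existence of any $a$ strictly between $\forall a$ and $\exists a$.

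Concretely, suppose for contradiction that $c := \exists a$ satisfies $c > a$. In the MV-chain, consider $c \implica a = \neg c \oplus a < 1$ (strict since $c \not\le a$). I would apply the characterization through (s2$''$)/(s2$_\ell''$): one shows that the covering relation $1 = c_1 \vee (\text{something involving } a)$ forces triviality. The cleanest route is: since $\exists a = \min\{c \in \exists A : c \ge a\}$, if $\exists a > a$ then there is \emph{no} element of $\exists A$ in the interval $(a, \exists a)$, and also none equal to $a$; but then taking any $d \in \exists A$ with $d < \exists a$ we get $d \le a$ hence $d \le \forall a$, so in fact $\exists A \cap [\![\forall a, \exists a]\!]$ omits the whole open-ish middle. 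Now use (s3) or (s2) to manufacture an element of $\exists A$ that should land in the forbidden region. For (s2$_\ell''$): writing $\exists a = c$, consider that $c \implica a$ together with the double negation law may be combined to produce $1 = c' \vee a'$ for suitable $c' \in \exists A$, $a' \in A$ with both $c', a' \ne 1$, contradicting (s2$_\ell''$).

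The main obstacle, and the step I expect to require the most care, is producing the explicit witness for the contradiction with (s2$_\ell''$): I need to pass from "$\exists a$ covers $a$ from above inside $\exists A$" to an honest decomposition $1 = c' \vee a'$ with $c' \in \exists A \setminus \{1\}$ and $a' \in A \setminus \{1\}$. In an MV-chain this should come from the involution: the element $a' := \neg(\neg(\exists a) \oplus a) = \exists a * \neg a$ is nonzero and measures the gap, and one checks $\neg(\exists a) \vee a' $ relates to $1$ appropriately, or alternatively one argues directly that $\forall$ applied to a suitable disjunction collapses by (M\ref{M4}) exactly as in Lemma~\ref{properties MBL}, remark after it. Once the witness is in hand, (s2$_\ell''$) gives $c' = 1$ or $a' = 1$, each of which forces $a = 1$ or $\exists a = a$, the desired conclusion. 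I would also double-check the boundary cases $a = 0$ and $a = 1$ separately (where $\exists a = a$ is immediate from (M\ref{M13})) so that the generic argument only needs to treat $0 < a < 1$.
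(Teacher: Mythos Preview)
Your approach has a genuine gap: condition (s2$_\ell''$) is \emph{vacuous} when $\mathbf{A}$ is a chain. In any totally ordered lattice, $c \vee a = 1$ already forces $c = 1$ or $a = 1$, regardless of whether $c$ belongs to $\exists A$ or not. So every subset of a chain satisfies (s2$_\ell''$) automatically, and no contradiction can be extracted from it. This is exactly why you could not ``produce the explicit witness'': there is none to produce along this route. More generally, the $m$-relative completeness conditions alone cannot force $\exists A = A$ in a chain, as the G\"odel case shows: Theorem~\ref{TEO: cadenas Heyting} exhibits many proper relatively complete subalgebras of G\"odel chains, and since $*$ is idempotent there, (s3) is also automatic.

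The paper's proof is entirely different and relies on identities that hold in MMV-algebras but \emph{fail} in general monadic BL-algebras. One sets $b := \exists a \to a$ and uses $\exists(\exists a \to a) = \exists a \to \exists a = 1$ (this is Lemma~\ref{properties MBL-MV}(6), which needs the involution). Then the chain gives either $b^2 \le \forall b$ or $\forall b < b^2$. The first case yields $1 = (\exists b)^2 = \exists(b^2) \le \forall b \le b$, a contradiction. In the second case $\forall b \le \forall(b^2) = (\forall b)^2$ (Lemma~\ref{properties of MMV}(7), again MV-specific), so $\forall b$ is idempotent, hence $0$ or $1$; the value $1$ is impossible, and $\forall b = 0$ is eliminated using $\forall(x \oplus x) = \forall x \oplus \forall x$ together with a final case split on $a \lessgtr \neg a$. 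The essential ingredients are thus $\exists(\exists x \to y) = \exists x \to \exists y$ and $\forall(x*x) = (\forall x)^2$, neither of which your outline invokes.
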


\begin{proof}
We show the result by way of contradiction. Let $\mathbf{A}$ be a
totally ordered MMV-algebra and assume there exists $a \in A$ such
that $\exists a \ne a$. Consider $b = \exists a \implica a$ and note that
$b \ne 1$ and $\exists b = \exists(\exists a \implica a) = \exists a \to
\exists a = 1$.

Suppose $b^2 \leq \forall b$. Then $1 = (\exists b)^2 = \exists b^2
\leq \exists \forall b = \forall b \leq b$, so $b = 1$, a
contradiction. Since $\mathbf{A}$ is a chain, $\forall b < b^2$.
Then $\forall b \leq \forall b^2 = (\forall b)^2$. Thus $\forall b$
is an idempotent element of $\mathbf{A}$ forcing $\forall b$ to be
$0$ or $1$. The case $\forall b = 1$ implies $b = 1$, a
contradiction. Now assume $\forall b = 0$, that is, $0 = \forall
(\exists a \implica a) = \exists a \implica \forall a$. Since $\forall a \leq
\exists a \implica \forall a$, it follows that $\forall a = 0$. Moreover,
$0 = \exists a \implica \forall a = \neg \exists a = \forall \neg a$.
Now, there are two possibilities for $a$: either $a \leq \neg a$ or
$\neg a \leq a$. If $a \leq \neg a$, then $2(\neg a) = 1$. So $1 =
\forall 2(\neg a) = 2 \forall (\neg a) = 0$, a contradiction.
Analogously, if $\neg a \leq a$, then $2a = 1$ and $1 = \forall 2a =
2 \forall a = 0$, another contradiction.
\end{proof}

\subsection{Monadic Gödel algebras}

Gödel algebras are prelinear Heyting algebras, that is, they
constitute the variety generated by totally ordered Heyting
algebras. Concretely, Gödel algebras are the subvariety of Heyting
algebras determined by the prelinearity equation $(x \implica y) \vee (y
\implica x) \approx 1$. This variety is generated by the Gödel t-norm $[0,1]_G$ and may be also identified with the
subvariety of BL-algebras that satisfy the equation $x^2 \approx x$ (idempotence), since this equation is equivalent to $x * y \approx x \wedge y$. In this subsection we are interested in the subvariety of monadic BL-algebras determined by the equation $x^2 \approx x$. We call these algebras {\em monadic Gödel
algebras}.

Monadic Heyting algebras were introduced by Monteiro and Varsavksy in \cite{MoVa57} and later studied in depth by Bezhanishvili in \cite{Bezhanishvili98}. As in the previous section we will study now the connection between monadic Gödel algebras and the variety of monadic Heyting algebras. More
precisely, we will show that monadic Gödel algebras coincide with
monadic prelinear Heyting algebras that satisfy the equation
\begin{equation} \label{EQ: semidistributividad del para-todo respecto del supremo}
\forall(\exists x \vee y) \approx \exists x \vee \forall y.
\end{equation}

We recall the definition of monadic Heyting algebras from
\cite{Bezhanishvili98}.

\begin{dfn}
An algebra $\langle A, \vee, \wedge, \implica, \exists, \forall, 0, 1 \rangle$, shortened as $\langle \mathbf{A},\exists, \forall\rangle$, is a
\emph{monadic Heyting algebra} if $\langle A, \vee, \wedge, \implica, 0,1 \rangle$ is a Heyting algebra and $\forall, \exists$
are unary operators on $A$ satisfying the following conditions for
all $a,b\in A$.
\begin{enumerate}[(H1)]
\item $\forall a\leq a$, $a\leq \exists a$.
\item $\forall (a\wedge b)= \forall a\wedge \forall b$, $\exists (a\vee b)= \exists a\vee \exists b$.
\item $\forall 1=1$, $\exists 0=0$.
\item $\forall \exists a=\exists a$, $\exists \forall a=\forall a$.
\item $\exists (\exists a\wedge b)=\exists a\wedge \exists b$.
\end{enumerate}
\end{dfn}

The following properties will be useful and their proofs may be
found in \cite{Bezhanishvili98}.

\begin{lem}\label{properties MLH}
The following properties hold in any monadic Heyting algebra
$\mathbf{A}$, where $a, b$ denote arbitrary elements of $A$.
\begin{enumerate}[$(1)$]
\item If $a\leq b$ then, $\forall a\leq \forall b$ and $\exists a \leq \exists
b$.
\item $\forall(a\implica b)\leq \forall a \implica \forall b$.
\item $\forall\forall a = \forall a$ and $\exists\exists a = \exists
a$.
\item $\forall(a\implica \forall b)= \exists a \implica \forall b$.
\item $\forall(a\implica \exists b)= \exists a \implica \exists b$.
\end{enumerate}
\end{lem}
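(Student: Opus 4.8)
The plan is to derive all five items directly from the axioms (H1)--(H5), in the order in which they are stated, since each item is used in the next.

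Item (1) follows from the observation that $a\le b$ forces $a=a\wedge b$, so by (H2) $\forall a=\forall(a\wedge b)=\forall a\wedge\forall b$, i.e.\ $\forall a\le\forall b$; dually, from $b=a\vee b$ and (H2), $\exists a\le\exists b$. Item (2) is then immediate: by (H2), item (1) and the Heyting inequality $(a\implica b)\wedge a\le b$ we get $\forall(a\implica b)\wedge\forall a=\forall\bigl((a\implica b)\wedge a\bigr)\le\forall b$, hence $\forall(a\implica b)\le\forall a\implica\forall b$. For item (3) I would first record the basic fact that $\exists A=\forall A$: by (H4) every $\exists a=\forall(\exists a)$ lies in $\forall A$ and every $\forall a=\exists(\forall a)$ lies in $\exists A$. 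Applying (H4) once more to the relevant witness shows that every $x\in\exists A=\forall A$ satisfies $\exists x=x$ and $\forall x=x$; in particular $\forall\forall a=\forall a$ and $\exists\exists a=\exists a$. (Along the way one also notes, from (H2) and (H3), that $\exists A=\forall A$ contains $0,1$ and is closed under $\wedge$ and $\vee$.)

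The substantive items are (4) and (5); I would prove (4) in detail, with (5) entirely analogous, replacing $\forall b$ by $\exists b$ and using $\exists\exists b=\exists b$ in place of (H4) where needed. For the inequality $\forall(a\implica\forall b)\le\exists a\implica\forall b$, put $c=\forall(a\implica\forall b)$. Since $c\in\forall A=\exists A$, it is a fixed point of $\exists$, so by (H5) $c\wedge\exists a=\exists c\wedge\exists a=\exists(\exists c\wedge a)=\exists(c\wedge a)$; and $c\wedge a\le(a\implica\forall b)\wedge a\le\forall b$ by (H1), so $c\wedge\exists a\le\exists\forall b=\forall b$ by item (1) and (H4), giving $c\le\exists a\implica\forall b$. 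For the reverse inequality I would show that $u:=\exists a\implica\forall b$ is itself a fixed point of $\exists$: by (H5), $\exists u\wedge\exists a=\exists(\exists a\wedge u)\le\exists\forall b=\forall b$, because $\exists a\wedge u\le\forall b$; hence $\exists u\le\exists a\implica\forall b=u$, and with (H1) we obtain $u=\exists u\in\exists A=\forall A$, so $u$ is also a fixed point of $\forall$. Since $a\le\exists a$ gives $u\le a\implica\forall b$, item (1) then yields $u=\forall u\le\forall(a\implica\forall b)$, and the two inequalities together give item (4).

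The one genuinely delicate point is this last fixed-point argument for $u=\exists a\implica\forall b$ (and its analogue $\exists a\implica\exists b$): everything else is routine manipulation once $\exists A=\forall A$ and its $\exists$- and $\forall$-idempotence are in hand, but showing that an implication between two elements of the common range again lies in that range is precisely where axiom (H5) has to be used in a non-obvious way. This is in effect the heart of Bezhanishvili's observation that $\exists A=\forall A$ is a relatively complete Heyting subalgebra of $\mathbf A$; see \cite{Bezhanishvili98} for the details.
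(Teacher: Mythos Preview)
Your proof is correct. The paper itself does not prove this lemma; it simply refers the reader to \cite{Bezhanishvili98} for the proofs. Your argument is a clean self-contained derivation from (H1)--(H5), and in particular your handling of items (4) and (5) --- first showing $\exists A=\forall A$ consists of common fixed points, then using (H5) to prove that $\exists a\implica\forall b$ (resp.\ $\exists a\implica\exists b$) is itself such a fixed point --- is exactly the argument one finds in Bezhanishvili's paper, so there is no discrepancy to report.

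One minor remark on presentation: in your sketch of item (3) you write ``Applying (H4) once more to the relevant witness''; it might be worth spelling out that for $x\in\forall A=\exists A$ one uses the $\exists$-witness $x=\exists z$ to get $\forall x=\forall\exists z=\exists z=x$, and the $\forall$-witness $x=\forall y$ to get $\exists x=\exists\forall y=\forall y=x$, since at that point neither idempotence has been established. But this is a matter of exposition, not a gap.
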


\begin{thm}
Monadic Gödel algebras coincide with monadic prelinear Heyting
algebras that satisfy equation \eqref{EQ: semidistributividad del
para-todo respecto del supremo}.
\end{thm}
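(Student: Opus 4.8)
The plan is to prove the statement by a translation between two equivalent presentations of the same algebras. First I would recall the underlying propositional fact: a BL-algebra satisfies the idempotency equation $x^2 \approx x$ if and only if $x * y \approx x \wedge y$, and in that case its $\{\vee,\wedge,\implica,0,1\}$-reduct is a prelinear Heyting algebra; conversely every prelinear Heyting algebra becomes a Gödel algebra once one defines $x * y := x \wedge y$ (residuation, integrality, commutativity and divisibility all hold automatically, and prelinearity is assumed). Since this term-equivalence leaves the unary operators $\exists$ and $\forall$ untouched, the theorem reduces to checking that, under this dictionary, the monadic BL-axioms (M1)--(M5) are jointly equivalent to the monadic Heyting axioms (H1)--(H5) together with equation \eqref{EQ: semidistributividad del para-todo respecto del supremo}.

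For the forward direction I would take an MBL-algebra satisfying $x^2 \approx x$ and simply read off what is needed from Lemma \ref{properties MBL}: (H1) is (M\ref{M1}) and (M\ref{M7}); (H2) is (M\ref{M37}) and (M\ref{M20}); (H3) is (M\ref{M10}) and (M\ref{M13}); (H4) is (M\ref{M6}) and (M\ref{M11}); (H5) is (M\ref{M32}) (after swapping the arguments of $\wedge$); and equation \eqref{EQ: semidistributividad del para-todo respecto del supremo} is literally axiom (M\ref{M4}).

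For the converse I would start from a monadic prelinear Heyting algebra satisfying \eqref{EQ: semidistributividad del para-todo respecto del supremo}, set $x * y := x \wedge y$, and verify (M1)--(M5) using Lemma \ref{properties MLH}. Here (M\ref{M1}) is (H1); (M\ref{M2}) is Lemma \ref{properties MLH} (4); (M\ref{M4}) is exactly equation \eqref{EQ: semidistributividad del para-todo respecto del supremo}; and (M\ref{M5}) is immediate because $* = \wedge$ is idempotent, so $\exists(a * a) = \exists a = \exists a * \exists a$. The one identity requiring a short computation is (M\ref{M3}): I would obtain $\forall(\forall a \implica b) \leq \forall\forall a \implica \forall b = \forall a \implica \forall b$ from Lemma \ref{properties MLH} (2) and (3), and the reverse inequality from $\forall a \implica \forall b \leq \forall a \implica b$ together with Lemma \ref{properties MLH} (4) and $\exists \forall a = \forall a$, which give $\forall(\forall a \implica \forall b) = \exists \forall a \implica \forall b = \forall a \implica \forall b$ and hence $\forall a \implica \forall b \leq \forall(\forall a \implica b)$.

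I do not expect a serious obstacle: the argument is a careful matching of axioms rather than a hard computation. The two points that deserve attention are, first, that in the converse direction equation \eqref{EQ: semidistributividad del para-todo respecto del supremo} must be invoked explicitly (it is precisely axiom (M\ref{M4}) and is not a consequence of (H1)--(H5) alone, which is exactly why it appears in the statement); and second, that one should confirm the derivation of (M\ref{M3}) from the monadic Heyting axioms as sketched above. Everything else, in particular the compatibility of the Gödel/Heyting term-equivalence with the quantifiers, is routine.
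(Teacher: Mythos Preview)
Your proposal is correct and follows essentially the same route as the paper: the forward direction is dispatched by citing the relevant items of Lemma~\ref{properties MBL} (the paper merely says ``immediate''), and for the converse you verify (M1)--(M5) exactly as the paper does, invoking Lemma~\ref{properties MLH}(4) for (M2), equation~\eqref{EQ: semidistributividad del para-todo respecto del supremo} for (M4), idempotency for (M5), and the same two-inequality argument for (M3). Your write-up is in fact a bit more explicit than the paper's, particularly in spelling out the underlying G\"odel/Heyting term-equivalence and in naming the specific items used in the forward direction.
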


\begin{proof}
If $\mathbf{A}$ is a monadic Gödel algebra, from the definition of
monadic BL-algebra and some of the properties of \lemref{properties
MBL} it is immediate that $\mathbf{A}$ is also a monadic prelinear
Heyting algebra and satisfies equation \eqref{EQ:
semidistributividad del para-todo respecto del supremo}.
Conversely, let $\mathbf{A}$ be a monadic prelinear Heyting algebra
that satisfies equation \eqref{EQ: semidistributividad del para-todo
respecto del supremo}. Clearly $\mathbf{A}$ satisfies conditions
(M1), (M4) and (M5) in the definition of monadic BL-algebra.
Condition (M2) follows from Lemma \ref{properties MLH} (4). It
remains to show condition (M3). From Lemma \ref{properties MLH},
$\forall(\forall a\implica b)\leq \forall\forall a \implica \forall
b = \forall a \implica \forall b$. In addition, from (H1) we know
that $\forall b \leq b$. Then, using the properties from Lemma
\ref{properties MLH}, we have that $\forall(\forall a \implica
\forall b)\leq \forall(\forall a\implica b)$ and then $\exists
\forall a \implica \forall b = \forall a \implica \forall b \leq
\forall(\forall a\implica b)$.
\end{proof}


\begin{obs}
Note that monadic prelinear Heyting algebras may not satisfy
equation \eqref{EQ: semidistributividad del para-todo respecto del
supremo}. A counterexample is given by the monadic Heyting algebra
depicted in the Hasse diagram below with the monadic operators
defined as in the table.

\begin{center}
\begin{minipage}{5cm}
\begin{tikzpicture}[scale=0.5]
\tikzstyle{nodo} = [circle,minimum width = 6pt,fill,inner sep = 0pt]
\node[nodo,label=above:$1$] (v1) at (0,3) {};
\node[nodo,label=left:$b$] (v2) at (-1.5,1) {};
\node[nodo,label=right:$c$] (v3) at (1.5,1) {};
\node[nodo,label=right:$a$] (v4) at (0,-1) {};
\node[nodo,label=below:$0$] (v5) at (0,-3) {};
\draw  (v1) edge (v2);
\draw  (v1) edge (v3);
\draw  (v3) edge (v4);
\draw  (v4) edge (v2);
\draw  (v4) edge (v5);
\end{tikzpicture}
\end{minipage}
\begin{minipage}{5cm}
\begin{tabular}{c|c|c|c|c|c}
$x$ & 0 & $a$ & $b$ & $c$ & $1$ \\ \hline
$\exists x$ & 0 & $c$& 1 & $c$ & 1 \\ \hline
$\forall x $& 0 & $0$ & 0 & $c$ & 1
\end{tabular}
\end{minipage}
\end{center}

Indeed, note that $\forall (b \vee \exists c)=  \forall (b \vee c) = \forall 1 = 1$ whereas $\forall b \vee \exists c = 0 \vee c = c$.
\end{obs}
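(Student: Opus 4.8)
The statement to be established is that the displayed five-element structure is a monadic prelinear Heyting algebra in which equation \eqref{EQ: semidistributividad del para-todo respecto del supremo} fails; since this is a concrete finite counterexample, the plan is to verify three things in turn: that the underlying lattice is a prelinear Heyting algebra, that $\langle \mathbf{A}, \exists, \forall\rangle$ satisfies the monadic Heyting axioms (H1)--(H5), and that a suitable instance of the equation fails. First I would read off the order from the Hasse diagram: $0 < a$, $a < b$, $a < c$, $b < 1$, $c < 1$, with $b$ and $c$ incomparable, so that $b \wedge c = a$ and $b \vee c = 1$. The lattice is finite and distributive (the sublattice $\{a, b, c, 1\}$ is the four-element Boolean algebra, and adjoining a bottom element preserves distributivity), hence it carries a unique Heyting implication. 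For prelinearity it suffices to treat the single incomparable pair: computing $b \implica c = c$ and $c \implica b = b$ yields $(b \implica c) \vee (c \implica b) = b \vee c = 1$, while every other pair is comparable and satisfies prelinearity trivially. Equivalently, one observes that the join-irreducible elements $\{a, b, c\}$ form a forest, which characterizes prelinear Heyting algebras.

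Next I would check that $\langle \mathbf{A}, \exists, \forall\rangle$ is a monadic Heyting algebra. The organizing observation is that $\exists A = \forall A = C = \{0, c, 1\}$ is a Heyting subalgebra (a three-element chain) and that the displayed table agrees with the formulas $\forall a = \max\{d \in C : d \leq a\}$ and $\exists a = \min\{d \in C : d \geq a\}$. Granting this, axioms (H1), (H3) and (H4) are immediate, while (H2) and (H5) reduce to a short finite case analysis over the five elements; alternatively one appeals to the Heyting analogue of the relatively complete subalgebra characterization (cf. Bezhanishvili \cite{Bezhanishvili98}). I expect this step, namely confirming that the exhibited pair $(\exists, \forall)$ really does satisfy all the monadic axioms, to be the main (though purely computational) obstacle, since the entire force of the remark depends on $\langle \mathbf{A}, \exists, \forall\rangle$ being a legitimate monadic prelinear Heyting algebra rather than on the easy failure of the equation.

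Finally, the failure of equation \eqref{EQ: semidistributividad del para-todo respecto del supremo} is the displayed computation, obtained by instantiating $x = c$ and $y = b$: the left-hand side equals $\forall(\exists c \vee b) = \forall(c \vee b) = \forall 1 = 1$, whereas the right-hand side equals $\exists c \vee \forall b = c \vee 0 = c$, and $1 \neq c$. This exhibits the required counterexample and shows that equation \eqref{EQ: semidistributividad del para-todo respecto del supremo} is not a consequence of the axioms of monadic prelinear Heyting algebras, justifying its explicit inclusion in the characterization of the preceding theorem.
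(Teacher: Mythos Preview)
Your proposal is correct and follows the same approach as the paper: the paper simply presents the counterexample and gives the one-line computation showing the failure of the equation, leaving to the reader the routine verification that the displayed structure is indeed a monadic prelinear Heyting algebra. You carry out exactly that verification (prelinearity via the single incomparable pair, and the monadic Heyting axioms via the relatively complete subalgebra $\{0,c,1\}$), and your final computation is identical to the paper's.
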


Finally, we would like to make explicit all the possible monadic
structures that we may define on a given totally ordered Gödel
algebra. Using that $a * b = a \wedge b$ holds true in any Gödel
algebra, the conditions stated in Theorem \ref{TEO: caracterizacion
de la imagen del cuantificador} reduce only to relative
completeness. More precisely, we have the following result.

\begin{thm} \label{TEO: cadenas Heyting}
Given a totally ordered Gödel algebra $\mathbf{A}$ and a relatively
complete subalgebra $\mathbf{C} \leq \mathbf{A}$, if we define on
$A$ the operations $$\forall a := \max \{c \in C: c \leq a\}, \qquad
\exists a := \min \{c \in C: c \geq a\},$$ then $\langle \mathbf{A}, \exists, \forall\rangle$ is a monadic Gödel algebra such that
$\forall A = \exists A = C$.
\end{thm}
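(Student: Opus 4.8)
The plan is to derive this result as a corollary of \thmref{TEO: caracterizacion de la imagen del cuantificador}. The point is that in a G\"odel algebra we have $x * y = x \wedge y$, so the divisibility-related axiom (M5), namely $\exists(x*x) \approx \exists x * \exists x$, becomes $\exists(x \wedge x) \approx \exists x \wedge \exists x$, which is trivially true; correspondingly condition (s3) in the definition of $m$-relatively complete subalgebra becomes vacuous, since $a * a = a \wedge a = a \leq c_1$ already gives $c_2 = a$... wait, that is not in $C$. Let me reconsider: if $a * a = a \leq c_1$ then we need $c_2 \in C$ with $a \leq c_2$ and $c_2 * c_2 = c_2 \leq c_1$; taking $c_2 = \exists a = \min\{c \in C : c \geq a\}$ works because $a \leq c_1$ and $c_1 \in C$ force $\exists a \leq c_1$, and $c_2 * c_2 = c_2 = \exists a \leq c_1$. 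So (s3) holds automatically in the G\"odel setting. Thus an $m$-relatively complete subalgebra of a G\"odel algebra is exactly a subalgebra satisfying (s1) and (s2).

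Next I would argue that, for a \emph{totally ordered} G\"odel algebra, an $m$-relatively complete subalgebra is the same thing as a \emph{relatively complete} subalgebra (a subalgebra satisfying (s1) alone, i.e. closed under the relevant infima and suprema). Condition (s1) is common to both notions, so the only thing to check is that (s1) implies (s2) — equivalently (s2$''$), or, since $\mathbf{C}$ is a chain, the simpler form (s2$_\ell''$): if $1 = c \vee a$ with $c \in C$, $a \in A$, then $c = 1$ or $a = 1$. This is where I expect the main (though still modest) work to be. In a totally ordered Heyting algebra, $c \vee a = 1$ means $\max(c,a) = 1$ since the order is linear, so immediately $c = 1$ or $a = 1$. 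Hence (s2$_\ell''$) holds trivially for \emph{any} subalgebra of a G\"odel chain, and in particular every relatively complete subalgebra of $\mathbf{A}$ is $m$-relatively complete.

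With these two observations in hand the theorem is immediate: given a relatively complete subalgebra $\mathbf{C} \leq \mathbf{A}$ of the G\"odel chain $\mathbf{A}$, by the above $\mathbf{C}$ is $m$-relatively complete, so \thmref{TEO: caracterizacion de la imagen del cuantificador} applies and shows that $\langle \mathbf{A}, \exists, \forall\rangle$ with $\forall a = \max\{c \in C : c \leq a\}$ and $\exists a = \min\{c \in C : c \geq a\}$ is a monadic BL-algebra with $\forall A = \exists A = C$. Finally, $\mathbf{A}$ being a G\"odel algebra (it satisfies $x^2 \approx x$), the resulting algebra satisfies $x^2 \approx x$ as well, so it is by definition a monadic G\"odel algebra. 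The only genuinely new content beyond citing the earlier theorem is the reduction of $m$-relative completeness to plain relative completeness in the linearly ordered idempotent case, which is exactly the discussion I sketched for conditions (s2$_\ell''$) and (s3).
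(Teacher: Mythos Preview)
Your proposal is correct and follows exactly the approach the paper indicates: the paper states (without a formal proof environment) that since $a * b = a \wedge b$ in G\"odel algebras, the conditions of \thmref{TEO: caracterizacion de la imagen del cuantificador} reduce to relative completeness alone. You have simply spelled out the two reductions --- that (s3) is automatic by idempotence and (s1), and that (s2$_\ell''$) is trivial in a chain --- which is precisely what the paper leaves to the reader.
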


In addition, since in any totally ordered Gödel algebra $\mathbf{A}$
we have that $$x \implica y = \begin{cases} 1 & \text{ if } x \leq y, \\
y & \text{ otherwise,} \end{cases}$$ any finite subset of $A$ that
contains $0$ and $1$ is a relatively complete subalgebra of
$\mathbf{A}$ and, hence, defines a structure of monadic Gödel
algebra on $\mathbf{A}$.

\subsection{Monadic product algebras}

In this subsection we introduce the subvariety of $\mathbb{MBL}$ that consists of those monadic BL-algebras whose underlying
BL-structure is a product algebra. We name these algebras {\em
monadic product algebras}. In particular, we will prove that only two monadic operators may be defined on any totally ordered product algebra, namely, the identity operators and the Monteiro-Baaz operators $\Delta$ and $\nabla$ (see \cite{Monteiro80}). We think that this subvariety deserves a more detailed study, but we will intend to pursue this task in a forthcoming article.

Recall that a \emph{product algebra} is a BL-algebra satisfying the
following identities:
\begin{enumerate}[(P1)]
\item $\neg \neg z\implica ((x*z\implica y*z)\implica (x\implica y))\approx 1$.
\item $x\wedge \neg x\approx 0$.
\end{enumerate}

Note that the identity (P2) implies that every product algebra is pseudocomplemented, whereas the identity (P1) implies that in any chain the non-zero elements form a cancellative hoop. For basic properties of product algebras and cancellative hoops, see \cite{Hajek98libro,BlFe00}.


We define a \emph{monadic product algebra} to be a monadic
BL-algebra that is also a product algebra.

\begin{example}
On any product algebra $\mathbf{A}$, we can define two sets of
monadic operators, that we will henceforth call {\em trivial
operators}:
\begin{itemize}
\item Identity operators: $\exists a = \forall a = a$ for every $a \in A$.
\item Monteiro-Baaz operators: $$\forall a = \Delta a =
\begin{cases} 1 & \text{ if } a=1, \\ 0 & \text{ if } a<1, \end{cases} \quad \text{ and } \quad \exists a = \nabla a = \begin{cases} 0 &  \text{ if } a = 0, \\ 1 & \text{ if } a >
0,
\end{cases}$$ for every $a \in A$.

Conditions (M1)-(M4) in the definition of monadic BL-algebras are
easily checked for these operators. To verify condition (M5),
note that if $a \ne 0$, $a^2 \ne 0$ and so $\exists a = (\exists
a)^2 = \exists a^2 = 1$.
\end{itemize}
\end{example}

We now intend to prove that in any totally ordered product algebra
we can only define the trivial quantifiers.

\begin{lem} \label{LEM: props en prod monad}
Let $\mathbf{A}$ be a non-trivial totally ordered monadic product
algebra.
\begin{enumerate}[$(1)$]
\item $\exists(\exists a \implica a) = 1$ for every $a \in A$.
\item If $a \in A \setminus \{0\}$ and $\forall a = 0$, then $\exists a = 1$.
\item If $A' = \{u \in A: \forall u \neq 0\}$, then $\mathbf{A}' = \langle A', *, \implica, 1\rangle$ is a cancellative hoop.
\end{enumerate}
\end{lem}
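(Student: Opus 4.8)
The plan is to prove the three items in order, each building on the previous ones. For item $(1)$, I would set $b = \exists a \implica a$. Since $a \leq \exists a$ we have $\exists a \implica \exists a = 1$, and using (M\ref{M23}) together with $a \leq \exists a$ one gets $\exists(\exists a \implica a) \geq \exists(\exists a) \implica \exists(\exists a)$ after noticing $\exists a \implica a \leq \exists a \implica \exists a$; more directly, since $\mathbf A$ is a chain, either $\exists a \leq a$ (so $b = 1$ and the claim is trivial) or $a < \exists a$, in which case $b = a$ by the description of $\implica$ on chains (for product chains, $x \implica y = y/x$ when $x > y$ among nonzero elements, and $= 0$ only when... ); in any case I would argue $\exists b = \exists a \implica \exists a = 1$ by using that $\exists$ is a homomorphism-like map and that $\exists(\exists a \implica a)$ can be computed via $\exists a \implica \exists a$ using properties (M\ref{M8}), (M\ref{M9}) from Lemma~\ref{properties MBL}. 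The cleanest route: $\exists a \implica a \leq \exists a \implica \exists a$, so $\exists(\exists a \implica a) \leq \exists(\exists a \implica \exists a) = \exists a \implica \exists a = 1$ by (M\ref{M29}); and $\exists(\exists a \implica a) = 1$ because $\exists$ preserves the order and $\exists a \implica a$ sits below... no — I would instead show $1 \leq \exists(\exists a \implica a)$ directly: $a \implica (\exists a \implica a) \geq$ something, or simply use $\exists(x) \geq x$ and that on a chain $\exists a \implica a \vee (a \implica \exists a) = 1$ with $a \implica \exists a = 1$.

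For item $(2)$, suppose $a \neq 0$ but $\forall a = 0$. I would consider $b = \exists a \implica a$ as above, so $\exists b = 1$ by $(1)$. The key computation is $\forall b = \forall(\exists a \implica a) = \exists a \implica \forall a = \exists a \implica 0 = \neg \exists a$ using (M\ref{M8}). Now I distinguish cases on the chain: if $\exists a < 1$ then $\neg \exists a$... in a product chain, $\neg x = 0$ for every $x > 0$, so $\forall b = \neg \exists a = 0$ (since $\exists a \geq a > 0$). Then from $b^2$: on a product chain the nonzero elements are cancellative, so $\forall b \leq \forall b^2 = (\forall b)^2$ forces $\forall b$ idempotent, hence $0$ or $1$; since $\exists b = 1$ and $b \geq \forall b$, if $\forall b = 0$ we still need a contradiction — here I would use (M\ref{M5}): $1 = (\exists b)^2 = \exists(b^2)$, but $b^2 \leq b$ and on a product chain with $b < 1$ we have $b^2 < b < 1$, and $\exists$ being monotone with $\exists(b^2) = 1$ forces... this needs $\exists b^2 \leq$ something $< 1$, a contradiction — giving $b = 1$, i.e. $\exists a \leq a$, hence $\exists a = a$, hence $a = \exists a$ and then $\forall a = \forall \exists a = \exists a = a \neq 0$, contradicting $\forall a = 0$. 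The surviving case is $\exists a = 1$, which is the conclusion. I expect this case analysis on the product chain to be the main obstacle, since it requires careful bookkeeping of which elements are idempotent (only $0$ and $1$ in a non-trivial product chain) and of the behaviour of $\neg$ and $\implica$ on the cancellative hoop of nonzero elements.

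For item $(3)$, I would check that $A' = \{u \in A : \forall u \neq 0\}$ is closed under $*$ and $\implica$ and contains $1$, and that $\langle A', *, \implica, 1\rangle$ satisfies the cancellative hoop axioms (it is a hoop since $A$ is, being a BL-chain; cancellativity is the point). Closure under $*$: if $\forall u \neq 0$ and $\forall v \neq 0$, then by (M\ref{M35}) $\forall(u * v) \geq \forall(\forall u * \forall v) = \forall u * \forall v$, and since nonzero elements of a product chain are cancellative this product is nonzero, so $u * v \in A'$. Closure under $\implica$: if $\forall v \neq 0$ then $\forall(u \implica v) \geq u \implica \forall v$... hmm, I'd rather note $v \leq u \implica v$ is false in general; instead, on a product chain $u \implica v \geq v$ always, so $\forall(u \implica v) \geq \forall v$? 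No — $u \implica v \geq v$ gives $\forall(u \implica v) \geq \forall v > 0$ by (M\ref{M17}), so $A'$ is closed under $\implica$. And $1 \in A'$ since $\forall 1 = 1 \neq 0$. Cancellativity: $A'$ is a subset of the cancellative hoop of nonzero elements of $\mathbf A$ (every $u \in A'$ has $\forall u \neq 0$ hence $u \geq \forall u > 0$), and cancellativity is inherited by subalgebras, so $\mathbf A'$ is a cancellative hoop. I would present the verification of the hoop axioms tersely, citing \cite{Hajek98libro,BlFe00} for the fact that the nonzero elements of a product chain form a cancellative hoop and that this property passes to subhoops.
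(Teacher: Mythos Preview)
Your treatment of $(3)$ is essentially the paper's: show $1\in A'$, show closure under $*$ via $\forall(u*v)\ge\forall u*\forall v>0$, show closure under $\implica$ via $v\le u\implica v$ (this inequality \emph{is} valid in every BL-algebra, since $u*v\le v$), and inherit cancellativity from the hoop of nonzero elements. That part is fine.

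For $(1)$, however, none of your attempted routes actually closes the argument. The observation $\exists(\exists a\implica a)\le 1$ via (M\ref{M29}) is vacuous, and the prelinearity identity $(\exists a\implica a)\vee(a\implica\exists a)=1$ together with $a\implica\exists a=1$ tells you nothing about the other disjunct. The idea you are missing is to use divisibility together with the cancellativity of the product chain: for $a>0$ write $a=\exists a\wedge a=\exists a*(\exists a\implica a)$, apply $\exists$ and use (M\ref{M26}) to get $\exists a=\exists a*\exists(\exists a\implica a)$, and then cancel $\exists a>0$ to obtain $\exists(\exists a\implica a)=1$.

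For $(2)$, your plan has a genuine gap. After setting $b=\exists a\implica a$ and computing (correctly, via (M\ref{M8})) that $\forall b=\neg\exists a=0$, you try to derive a contradiction from the configuration ``$\exists b=1$, $\forall b=0$, $b<1$''. But this configuration is \emph{not} contradictory in a monadic product chain: with the Monteiro--Baaz operators $\Delta,\nabla$ on any nontrivial product chain, every $0<b<1$ satisfies exactly this. (Your side-claim $\forall(b^2)=(\forall b)^2$ is also not available in $\mathbb{MBL}$; the paper explicitly notes that $\forall(x*x)\approx\forall x*\forall x$ can fail.) The paper's argument instead compares $b$ with $\exists a$ on the chain: if $\exists a\le b$, then $\exists a=\forall\exists a\le\forall b=\exists a\implica\forall a=\exists a\implica 0=0$, contradicting $a>0$; hence $b<\exists a$, so by $(1)$ and monotonicity $1=\exists b\le\exists\exists a=\exists a$.
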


\begin{proof}
To prove $(1)$, note first that $\exists (\exists 0\implica 0) = 1$.
Now let $a > 0$. Then $a = \exists a \wedge a = \exists a * (\exists
a \implica a)$. Thus, \[\exists a = \exists ( \exists a*(\exists
a\implica a))= \exists a* \exists(\exists a\implica a).\] Since
$\exists a > 0$, we can cancel out $\exists a$ in the previous
equation and get that $1=\exists(\exists a\implica a)$.

To prove $(2)$, let $a \in A\setminus\{0\}$ such that $\forall a =
0$. Since $\mathbf A$ is totally ordered then $\exists a\implica
a<\exists a$ or $\exists a\leq \exists a\implica a$. Let us suppose
that $\exists a\leq \exists a\implica a$. Then $0<a\leq \exists
a=\forall \exists a\leq \forall (\exists a\implica a)= \exists
a\implica \forall a=\exists a\implica 0=0$, a contradiction. Then
$\exists a\implica a<\exists a$. From $(1)$, we have that $1 =
\exists (\exists a \implica a) \leq \exists \exists a$, so $\exists
a=1$.

Finally we show $(3)$. Since the non-zero elements of $\mathbf{A}$
form a cancellative hoop, it is enough to show that $1 \in A'$ and
that $A'$ is closed under $*$ and $\to$. Indeed, $1 \in A'$ since
$\forall 1=1 > 0$. In adddition, if $u,v\in A'$, then
\[0<\forall u*\forall v=\forall(\forall u*\forall v)\leq \forall (u*v).\]
Thus, $u*v\in A'$. From $0<\forall v\leq \forall (u\implica v)$, we
have that $u\implica v\in A'$.
\end{proof}

From the last lemma, we know that $\mathbf{A}'$ is a cancellative hoop. In
particular, $\mathbf{A}'$ is a Wajsberg hoop and hence, we may apply to this
hoop the MV-closure construction given in
\cite{AbCaDV10}. We recall here
this construction.
%
We define the
MV-closure $\mathbf{MV(A')}$ of the cancellative hoop $\mathbf{A'} =
\langle A', *, \implica ,1\rangle$ as follows:
\[\mathbf{MV(A')}= \langle A'\times \{0,1\}, \oplus_{mv}, \neg_{mv}, 0_{mv}\rangle,\]
where $0_{mv}:=(1,0)$, $\neg_{mv}(a, i):=(a, 1-i)$,
\[(a,i)\oplus_{mv}(b,j):=\begin{cases}
(a\oplus b,1 ) &\text{if } i=j=1,\\
(b\implica a,1 ) &\text{if } i=1 \text{ and } j=0,\\
(a\implica b,1 ) &\text{if } i=0 \text{ and } j=1,\\
(a * b,0 ) &\text{if } i=j=0,
\end{cases}\]
where $a\oplus b= (a\implica(a*b))\implica b$. The other common
MV-operations are defined as follows:
\begin{quote}
\begin{itemize}
\item $1_{mv} := \neg_{mv}0_{mv}=(1, 1)$,
\item $(a,i)  * _{mv} (b,j) := \neg_{mv}(\neg_{mv}(a,i) \oplus_{mv}
\neg_{mv}(b,j))$,
\item $(a, i) \implica_{mv} (b,j) := \neg_{mv}(a,i) \oplus_{mv} (b,j)$,
\item $(a,i) \wedge_{mv} (b,j) := (a,i)  * _{mv} ((a,i)
\implica_{mv} (b,j))$,
\item $(a,i) \vee_{mv} (b,j) = ((a,i) \implica_{mv} (b,j)) \implica_{mv} (b,j)$.
\end{itemize}
\end{quote}
If we identify $a$ with $(a,1)$ for every $a\in A'$, we can consider
$\mathbf A'$ a subalgebra of the hoop-reduct of $\mathbf{MV(A')}$.
If $b=(a, 0)$ for some $a\in A'$, then $b=\neg_{mv}(a,1)$ and we
write $b=\neg_{mv}a$. Then we can consider the universe of $\mathbf{MV(A')}$ as the
disjoint union of $A'$ and $\neg_{mv} A'$. The order relation on
$\mathbf{MV(A')}$ is given by: $(a,i)\leq (b,j)$ if and only if one
of the following conditions holds:
\begin{quote}
\begin{itemize}
\item $a\leq b$ and $i=j=1$,
\item $a\oplus b=1$, $i=0$, and $j=1$,
\item $b\leq a$ and $i=j=0$.
\end{itemize}
\end{quote}
Observe that since $\mathbf{A'}$ is a cancellative hoop, then
$a\oplus b=1$ for every $a, b\in A'$, so in this case, the
elements in $A'$ are all above the ones in $\neg_{mv}A'$. Thus
$\mathbf{MV}(\mathbf{A}')$ is a totally ordered MV-algebra. Let us define the following quantifiers:
\begin{multicols}{2}
$\exists (a, i)=
\begin{cases}
(\exists a, 1) & \text{if } i=1, \\
(\forall a, 0) & \text{if } i=0,
\end{cases}$

$\forall (a, i)=
\begin{cases}
(\forall a, 1) & \text{if } i=1,\\
(\exists a, 0) & \text{if } i=0.
\end{cases}$
\end{multicols}
Then the following result may be easily checked.

\begin{lem}
Let $\mathbf{A}$ be a non-trivial totally ordered monadic product
algebra and let $A' = \{u\in A: \forall u \neq 0\}$. Then $\langle
\mathbf{MV}(\mathbf{A}'), \exists,\forall\rangle$ is an MMV-chain.
\end{lem}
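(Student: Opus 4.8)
The plan is to verify directly that $\langle \mathbf{MV}(\mathbf{A}'), \exists, \forall\rangle$ satisfies the axioms (MV1)--(MV6) of a monadic MV-algebra, given that $\mathbf{MV}(\mathbf{A}')$ is already known to be a totally ordered MV-algebra (as noted just before the statement) and that the restrictions of $\exists$ and $\forall$ to $A$ come from a monadic product algebra structure. Since the MV-algebra part is granted, the whole burden is to check that the defined $\exists$ is a well-behaved existential quantifier and that $\forall = \neg_{mv} \exists \neg_{mv}$, so that the two quantifiers are mutually definable in the MV sense.

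First I would record the basic compatibility facts. Note that $\neg_{mv}\exists(a,i) = \forall\neg_{mv}(a,i)$ and $\neg_{mv}\forall(a,i) = \exists\neg_{mv}(a,i)$ follow immediately by cases from the definitions of $\exists$, $\forall$ and $\neg_{mv}$; this gives $\forall(a,i) = \neg_{mv}\exists\neg_{mv}(a,i)$, so only the properties of $\exists$ need checking. Next, I would observe that $\exists$ restricted to $A' = \{(a,1): a \in A'\}$ is just the quantifier of $\mathbf{A}$ (transported along the identification), and on $\neg_{mv}A'$ it is the $\forall$ of $\mathbf{A}$ with the sign flipped; both $\forall A'$ and $\exists A'$ land inside $A'$ by \lemref{LEM: props en prod monad}(3) (closure of $A'$). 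So $\exists$ maps $\mathbf{MV}(\mathbf{A}')$ into the union $\exists A' \cup \neg_{mv}(\forall A')$, and in particular $\exists\exists = \exists$ and $\forall\exists = \exists$ hold by the corresponding facts (M\ref{M14}), (M\ref{M6}) in $\mathbf{A}$.

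For (MV1), $(a,i) \le \exists(a,i)$: when $i=1$ this is $a \le \exists a$ in $\mathbf{A}$; when $i=0$ this is $(a,0) \le (\forall a, 0)$, i.e.\ $\forall a \le a$, which is (M\ref{M1}). For (MV2), $\exists((a,i)\vee_{mv}(b,j)) = \exists(a,i)\vee_{mv}\exists(b,j)$: since the chain $\mathbf{MV}(\mathbf{A}')$ puts all of $A'$ above all of $\neg_{mv}A'$, the join is computed componentwise in each of the three comparable cases, and the identity reduces to $\exists(a\vee b) = \exists a \vee \exists b$ (M\ref{M20}) on $A'$, to $\forall(a\wedge b) = \forall a \wedge \forall b$ (M\ref{M37}) on $\neg_{mv}A'$ (recall $\vee_{mv}$ on the negative cone corresponds to $\wedge$ of the representatives), and to a trivial check in the mixed case where the larger element simply absorbs. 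For (MV3), $\exists\neg_{mv}\exists(a,i) = \neg_{mv}\exists(a,i)$: using $\neg_{mv}\exists = \forall\neg_{mv}$ and $\exists\forall = \forall$ this becomes $\exists\forall\neg_{mv}(a,i) = \forall\neg_{mv}(a,i)$, i.e.\ (M\ref{M11}) applied inside $\mathbf{A}$. Axioms (MV5) and (MV6), $\exists((a,i) * (a,i)) = \exists(a,i) * \exists(a,i)$ and the analogous one for $\oplus$, again split into the three sign-cases; in each, the square (or double) of an element stays in the same cone (except possibly landing at $0_{mv}$ or at a boundary, which I would treat separately), and the identity reduces to (M\ref{M5}) $\exists(x*x) = \exists x * \exists x$ in $\mathbf{A}$, to the dual statement $\forall$ of the product $*$ on the cancellative hoop, and to a computation using that $\mathbf{A}'$ is cancellative. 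Finally (MV4), $\exists(\exists(a,i)\oplus_{mv}\exists(b,j)) = \exists(a,i)\oplus_{mv}\exists(b,j)$, follows from (MV3) plus \lemref{properties MBL-MV}(6) (or directly from \lemref{properties of MMV}(6) once the other axioms are in place), since $\exists(a,i)\oplus_{mv}\exists(b,j)$ is a value of $\exists$.

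The main obstacle I expect is the bookkeeping in the mixed sign-cases of (MV2), (MV5) and (MV6): when $(a,i)$ and $(b,j)$ have opposite second coordinates, the operations $\oplus_{mv}$, $*_{mv}$ use the residual $\implica$ of $\mathbf{A}'$ rather than $*$ or $\oplus$, so one must carefully match the value of $\exists$ or $\forall$ on a residual against the MV-operation of the two quantified arguments. Here the identities (M\ref{M8})--(M\ref{M9}), (M\ref{M30}), (M\ref{M33}) and \lemref{properties MBL-MV}(5)--(6) — which govern how $\forall$ and $\exists$ interact with $\implica$ in an MBL-algebra, together with the cancellativity of $\mathbf{A}'$ — are exactly what make these cases go through. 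Once these are dispatched, the statement follows, since all six axioms (MV1)--(MV6) have been verified and the MV-reduct was granted. \qedhere
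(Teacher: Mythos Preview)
Your overall plan---reduce to the $\exists$-axioms (MV1)--(MV6) using $\forall=\neg_{mv}\exists\neg_{mv}$ and then check them by cases on the second coordinate---is the natural one, and the paper itself offers no details (it merely says the result ``may be easily checked''). Most of your verifications go through as you indicate: monotonicity gives (MV1) and (MV2); the image of $\exists$ is exactly $\mathbf{MV}(C)$ with $C=\forall A\cap A'$, which is an MV-subalgebra fixed pointwise by $\exists$, and this immediately yields (MV3) and (MV4); and (MV5) is easy because $(a,0)*_{mv}(a,0)=0_{mv}$ in the cancellative case.

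There is, however, a genuine gap at (MV6). For $x=(a,0)$ one computes $(a,0)\oplus_{mv}(a,0)=(a*a,0)$, so (MV6) becomes $\forall(a*a)=(\forall a)^2$ for $a\in A'$. You call this ``the dual statement $\forall$ of the product $*$ on the cancellative hoop'', but this identity is \emph{not} available from the MBL axioms: the paper explicitly exhibits an MBL-chain in which $\forall(x*x)\ne(\forall x)^2$. Cancellativity alone does not rescue it either; the obstruction is exactly the possibility $\forall a\le a^2$ with $a\ne 1$, in which case $\forall(a^2)=\forall a>(\forall a)^2$. Ruling this out is tantamount to proving that the quantifiers are trivial on $A'$, i.e.\ the very corollary this lemma is meant to establish. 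Also, (MV5) and (MV6) involve a single argument, so there are only two cases ($i=0$ and $i=1$), not three ``mixed sign-cases''.

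The clean fix is to check the MBL axioms (M1)--(M5) on $\mathbf{MV}(\mathbf{A}')$ instead of (MV1)--(MV6): the only place the two routes differ in difficulty is (M5) versus (MV6), and (M5) is trivial on the negative cone since $(a,0)*_{mv}(a,0)=0_{mv}$. Once $\langle\mathbf{MV}(\mathbf{A}'),\exists,\forall\rangle$ is an MBL-algebra satisfying $\neg\neg x\approx x$, \thmref{cuando una MBL es MMV} gives the MMV structure and (MV6) for free. Equivalently, one may check directly that $\mathbf{MV}(C)$ is an $m$-relatively complete subalgebra of $\mathbf{MV}(\mathbf{A}')$ (condition (s3) reduces to (s3) for $\forall A$ in $\mathbf{A}$, and (s2) is vacuous on a chain) and invoke \thmref{TEO: caracterizacion de la imagen del cuantificador} followed by \thmref{cuando una MBL es MMV}.
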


As we saw in the previous subsection, on an MV-chain we can only define the identity quantifiers. Thus the next corollary is
immediate.

\begin{cor} \label{cor:e(A')=f(A')=A'}
Let $\mathbf{A}$ be a non-trivial totally ordered monadic product
algebra, then $\exists A' = \forall A' = A'$.
\end{cor}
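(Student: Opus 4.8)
The plan is to read this off directly from the preceding lemma together with \thmref{TEO: en las MMV cadenas los cuantifs son triviales}. The preceding lemma tells us that $\langle \mathbf{MV}(\mathbf{A}'), \exists, \forall\rangle$ is an MMV-chain, and \thmref{TEO: en las MMV cadenas los cuantifs son triviales} says that the quantifier on any MMV-chain is the identity. Hence $\exists$ is the identity map on $\mathbf{MV}(\mathbf{A}')$, and since $\forall$ is defined on the MV-closure by $\forall x = \neg_{mv}\exists\neg_{mv}x$, the operator $\forall$ is the identity on $\mathbf{MV}(\mathbf{A}')$ as well.

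The second step is to transfer this back to $A'$. Under the identification $a \leftrightarrow (a,1)$, the copy of $A'$ lives inside $\mathbf{MV}(\mathbf{A}')$, and the quantifiers on the MV-closure were defined precisely so that $\exists(a,1) = (\exists a, 1)$ and $\forall(a,1) = (\forall a, 1)$ for $a \in A'$. Therefore $(\exists a, 1) = \exists(a,1) = (a,1)$ and $(\forall a, 1) = \forall(a,1) = (a,1)$, which forces $\exists a = a$ and $\forall a = a$ for every $a \in A'$. In particular $\exists A' = \forall A' = A'$, as claimed.

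I do not anticipate a real obstacle: all the substance has already been invested in showing that the MV-closure equipped with the displayed quantifiers is a genuine MMV-chain (the preceding lemma) and in \thmref{TEO: en las MMV cadenas los cuantifs son triviales}. The only point that deserves a moment's attention is checking that the quantifiers on $\mathbf{MV}(\mathbf{A}')$ restrict, under the embedding of $A'$, to the original $\exists$ and $\forall$; but this is immediate from the first branch of each case split, since the $i=1$ case of $\exists(a,i)$ has first coordinate $\exists a$ and that of $\forall(a,i)$ has first coordinate $\forall a$.
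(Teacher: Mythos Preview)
Your proof is correct and follows exactly the same approach as the paper: apply \thmref{TEO: en las MMV cadenas los cuantifs son triviales} to the MMV-chain $\langle \mathbf{MV}(\mathbf{A}'), \exists, \forall\rangle$ provided by the preceding lemma, and then restrict to $A'$ via the embedding $a \mapsto (a,1)$. Your write-up is in fact more explicit than the paper's one-line justification, and the stronger conclusion you draw---that $\exists a = a = \forall a$ for every $a \in A'$---is exactly what is used in the proof of the subsequent theorem.
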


We are now ready to prove the main result of this subsection.

\begin{thm} \label{TEO: cadenas producto}
In any totally ordered monadic product algebra the quantifiers are
trivial.
\end{thm}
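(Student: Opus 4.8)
The plan is to dispose of the one-element algebra trivially and, for a non-trivial totally ordered monadic product algebra $\mathbf{A}$, to first determine the quantifiers on $A' = \{u \in A : \forall u \neq 0\}$ and then on the rest of $A$. For the first part I would argue that $\forall$ and $\exists$ act as the identity on $A'$: by Corollary~\ref{cor:e(A')=f(A')=A'} we have $\forall A' = \exists A' = A'$, and by (M\ref{M14}) both operators are idempotent, so for $u \in A'$, writing $u = \forall v$ with $v \in A'$ gives $\forall u = \forall \forall v = \forall v = u$, and symmetrically $\exists u = u$. (Equivalently, this is the translation through the MV-closure of the fact that the quantifiers on the MV-chain $\mathbf{MV}(\mathbf{A}')$ are the identity, by Theorem~\ref{TEO: en las MMV cadenas los cuantifs son triviales}.)

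I would then split into two cases according to whether $A' = A \setminus \{0\}$. If it is, then by the previous paragraph together with (M\ref{M13}) we have $\forall a = \exists a = a$ for every $a \in A$, so the quantifiers are the identity operators. Otherwise there is $a_0 \in A \setminus \{0\}$ with $\forall a_0 = 0$; by part~$(2)$ of Lemma~\ref{LEM: props en prod monad} we get $\exists a_0 = 1$, and $a_0 < 1$ since $\forall 1 = 1 \neq 0$ by (M\ref{M10}). I would show that $\forall b = 0$ and $\exists b = 1$ for every $b$ with $0 < b < 1$: if $\forall b \neq 0$ then $b \in A'$, so $\forall b = b$ and $\exists b = b$ by the first part, but comparing $b$ and $a_0$ in the chain, the case $b \leq a_0$ gives $\forall b \leq \forall a_0 = 0$ by (M\ref{M17}), contradicting $\forall b = b \neq 0$, while the case $a_0 \leq b$ gives $1 = \exists a_0 \leq \exists b = b$ by (M\ref{M17}), contradicting $b < 1$; hence $\forall b = 0$, and then $\exists b = 1$ again by part~$(2)$ of Lemma~\ref{LEM: props en prod monad}. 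With (M\ref{M10}) and (M\ref{M13}) this shows $\forall = \Delta$ and $\exists = \nabla$, the Monteiro--Baaz operators. In both cases the quantifiers are trivial.

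The step I expect to carry the weight is the identification of $\forall$ and $\exists$ with the identity on $A'$; however, its substance has already been established before the theorem, through the MV-closure construction $\mathbf{MV}(\mathbf{A}')$ and the triviality of the quantifiers on MV-chains (Theorem~\ref{TEO: en las MMV cadenas los cuantifs son triviales}), so in the proof it reduces to the two lines above. What remains is the short, essentially order-theoretic case analysis, relying on totality of the order, monotonicity of the quantifiers (M\ref{M17}), and part~$(2)$ of Lemma~\ref{LEM: props en prod monad}.
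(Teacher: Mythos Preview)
Your proof is correct and follows essentially the same approach as the paper's: both first establish that $\forall$ and $\exists$ act as the identity on $A'$ (via Corollary~\ref{cor:e(A')=f(A')=A'}), then split on whether $A' = A\setminus\{0\}$, using Lemma~\ref{LEM: props en prod monad}~(2) and monotonicity to handle the Monteiro--Baaz case. You are in fact slightly more explicit than the paper in deriving $\forall u = u$ for $u\in A'$ from the set equality $\forall A' = A'$ via idempotence (M\ref{M14}); the paper tacitly uses this identification when invoking the corollary.
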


\begin{proof}
Let $\mathbf{A}$ be a non-trivial totally ordered monadic product
algebra. Let us suppose that there is $u\in A$ such that $\forall
u=0$ and $u>0$. From \lemref{LEM: props en prod monad} $(2)$ we know
that $\exists u=1$. Let $v \in A$ and $v \neq 0,1$. Since $\mathbf{A}$ is a
chain we have that $v\leq u $ or $u\leq v$. If $v \leq u$, then
$\forall v=0$ and, again by \lemref{LEM: props en prod monad} $(2)$,
$\exists v=1$. If $u\leq v$ then $\exists v=1$. If $\forall v\neq 0$
then $v\in A'$ and, from the previous corollary, $\forall
v=v=\exists v=1$, which is a contradiction. So, $\forall v=0$. This
proves that $\exists = \nabla$ and $\forall = \Delta$.

We now deal with the case in which $\forall u\neq 0$ for all $u \in
A\setminus\{0\}$. Then $A' = \{u\in A: \forall u \neq 0\} = A
\setminus \{0\}$ and, from the previous corollary, we have that
$\exists A' = \forall A' = A'$. Therefore, $\exists = \forall = id$.
\end{proof}

\section{Monadic BL-chains}

The objective of this section is to characterize all MBL-chains. Based on the characterization of BL-chains as ordinal sums of totally ordered Wajsberg hoops given by Aglianò and Montagna in \cite{AgMo03}, later simplified by Busaniche in \cite{Busaniche04}, we will present a way of building a monadic BL-chain as an ordinal sum of totally ordered Wajsberg hoops indexed on a monadic Heyting chain. Moreover, we will also show that any monadic BL-chain may be obtained using this construction.

First we need to recall the definition of ordinal sum of a family of
Wajsberg hoops indexed on a totally ordered set $I$. Fix a bounded
totally ordered set $(I,\leq,0,1)$ that will be used as an index
set. Set $C_1 = \{1\}$ and, for each $i \in I \setminus\{1\}$, let
$C_i$ be a set such that $\langle C_i \cup \{1\}, *_i, \implica_i, 1\rangle$ is a totally ordered Wajsberg hoop. Assume also that $C_0$
has a least element $0$.

Let $C_I := \{(i, a): i \in I, a \in C_i\} \subseteq I \times
(\bigcup_{i \in I} C_i)$. We define a total order on $C_I$
(lexicographic order) as follows: \[ (i,a) \leq (j,b) \text{ iff } i
< j \text { or } (i = j \text { and } a \leq b).\] We define on
$C_I$ the following operations:

\vspace{-1cm}
\begin{multicols}{2}
\[(i, a) * (j,b)
= \begin{cases}
(i, a) & \text{ if } i<j, \\
(i, a*_i b) & \text{ if } i=j, \\
(j, b) & \text{ if } i>j,
\end{cases}\]

\[(i, a)\implica (j,b) =
\begin{cases}
(1, 1) & \text{ if } (i, a)\leq (j,b), \\
(i, a\implica_i b) & \text{ if } i=j \text{ and } a>b, \\
(j, b) & \text{ if } i>j,
\end{cases}\]
\end{multicols}

The algebra $\mathbf{C}_I = \langle C_I, \vee, \wedge,
*, \implica, (0,0), (1,1)\rangle$ is a BL-chain. This follows
immediately from the construction given by Busaniche in \cite{Busaniche04}.

In order to define a structure of monadic BL-algebra on
$\mathbf{C}_I$, we will require that $I$ be endowed with a monadic
Heyting structure; we denote by $\mathbf{I}$ this monadic Heyting
chain. We also need to assume that the sets $C_i$, $i \in I$,
satisfy the following conditions:
\begin{quote}
\begin{itemize}
\item if $\forall i < i$, $C_{\forall i}$ has a greatest element $u_{\forall i}$,
\item if $i < \exists i$, $C_{\exists i}$ has a least element $0_{\exists i}$.
\end{itemize}
\end{quote}
Note that if $\exists i = 1$, then $0_{\exists i} = 0_1 = 1$.

Consider the subset $S \subseteq C_I$ given by: $(i,a) \in S$ iff $i
\in \forall I$. We claim that $\mathbf{S}$ is an $m$-relatively complete
subalgebra of $\mathbf{C}_I$. The fact that $\mathbf{S}$ is a subalgebra of
$\mathbf{C}_I$ is immediate from the definition of the operations.
We now show that the conditions for $m$-relative completeness hold
for $\mathbf{S}$:
\begin{enumerate}[$(1)$]
\item Let $(i,a)$ be any element of $C_I$. If $i \in \forall I$,
then $(i,a) \in S$ and the conditions hold trivially. Suppose $i
\not\in \forall I$. Then $i < \exists i$ and it follows that
$(\exists i, 0_{\exists i})$ exists and $(i,a) \leq (\exists i,
0_{\exists i})$. Moreover, if $(i,a) \leq (j,b)$ for some $(j,b) \in
S$, i.e., $j \in \forall I$, then $i \leq j$, so $\exists i \leq j$
and then $(\exists i, 0_{\exists i}) \leq (j,b)$. The dual
condition follows analogously.

\item This condition is trivial for chains.

\item Consider $(i,a) \in C_I$ and $(j,b) \in S$ (i.e. $j \in \forall I$) such that $(i,a)^2
= (i,a^2) \leq (j,b)$. If $i \in \forall I$, then $(i,a) \in S$ and
the condition follows trivially. Suppose $i \not\in \forall I$.
Then $i < \exists i \leq j$, so $(i,a) \leq (\exists i, 0_{\exists
i})$ and $(\exists i, 0_{\exists i})^2 = (\exists i, 0_{\exists i})
\leq (j,b)$.
\end{enumerate}
We denote by $\mathbf{C_I}$ the monadic BL-algebra induced by $S$ on
$\mathbf{C}_I$.

We have thus shown the following theorem.

\begin{thm}
$\mathbf{C_I}$ is a monadic BL-chain and the monadic operators on
$\mathbf{C_I}$ are given by:

\vspace{-1cm}
\begin{multicols}{2}
\[\exists(i,a) = \begin{cases}
(i,a) & \text{ if } i \in \forall I, \\
(\exists i, 0_{\exists i} ) & \text{ if } i \notin \forall I,
\end{cases}\]

\[ \forall(i,a)= \begin{cases}
(i, a) & \text{ if } i \in \forall I, \\
(\forall i, u_{\forall i} ) & \text{ if } i \notin \forall I.
\end{cases}\]
\end{multicols}
\end{thm}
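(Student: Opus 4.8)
The plan is to assemble the pieces already in place and then carry out one short direct computation. We have already observed that $\mathbf{C}_I$ is a BL-chain --- its order is the total lexicographic order on $C_I$ --- and that $\mathbf{S}$, the set of pairs $(i,a)$ with $i \in \forall I$, is an $m$-relatively complete subalgebra of $\mathbf{C}_I$. Hence Theorem~\ref{TEO: caracterizacion de la imagen del cuantificador} applies: $\langle \mathbf{C}_I, \exists, \forall\rangle$ is a monadic BL-algebra, and since $\mathbf{C}_I$ is a chain it is in fact a monadic BL-chain, with operators $\exists(i,a) = \min\{c \in S : c \geq (i,a)\}$ and $\forall(i,a) = \max\{c \in S : c \leq (i,a)\}$. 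The only thing left to do is evaluate these two extrema explicitly, and I would do so by distinguishing whether $i \in \forall I$.

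If $i \in \forall I$ then $(i,a) \in S$, so $(i,a)$ is simultaneously the greatest element of $S$ below it and the least above it, whence $\exists(i,a) = \forall(i,a) = (i,a)$. For the case $i \notin \forall I$ I would first record two order-theoretic facts about the monadic Heyting chain $\mathbf{I}$. First, $\forall I = \exists I$: if $x = \forall y$ then $\exists x = \exists \forall y = \forall y = x$ by (H4), and symmetrically; consequently $i \notin \forall I$ forces $\forall i < i < \exists i$, because $\forall i \leq i \leq \exists i$ by (H1) and equality on either side would put $i$ into $\forall I = \exists I$. Second, $\exists i = \min\{\, j \in \forall I : j > i \,\}$: indeed $\exists i \in \forall I$ since $\forall \exists i = \exists i$, and $\exists i > i$; and if $j \in \forall I$ satisfies $j > i$, then applying the monotone operator $\exists$ (Lemma~\ref{properties MLH}) and using $\exists j = j$ gives $\exists i \leq j$. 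Dually, $\forall i = \max\{\, j \in \forall I : j < i \,\}$.

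With these at hand the computation is immediate. An element $(j,b) \in S$ satisfies $(j,b) \geq (i,a)$ iff $j \in \forall I$ and $j > i$ --- the subcase $j = i$ being impossible since $i \notin \forall I$ --- and among such pairs the least in the lexicographic order is obtained by taking $j = \exists i$ and then the least element of $C_{\exists i}$, namely $0_{\exists i}$; this exists precisely because $i < \exists i$, one of the standing hypotheses on the family $\{C_i\}$, and it reduces to $1$ when $\exists i = 1$, consistently with $C_1 = \{1\}$. Hence $\exists(i,a) = (\exists i, 0_{\exists i})$, and dually $\forall(i,a) = (\forall i, u_{\forall i})$, where $u_{\forall i}$ is the greatest element of $C_{\forall i}$ (which exists because $\forall i < i$). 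There is no genuine obstacle here beyond bookkeeping with the lexicographic order; the only point calling for a moment's care is the identification of $\exists i$ and $\forall i$ as the nearest elements of $\forall I$ lying above and below $i$, together with the observation that $0_{\exists i}$ and $u_{\forall i}$ are exactly the extremal elements that the construction was designed to supply.
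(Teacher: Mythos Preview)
Your proposal is correct and follows essentially the same route as the paper: the paper verifies, immediately before stating the theorem, that $\mathbf{S}$ is an $m$-relatively complete subalgebra of $\mathbf{C}_I$, and in the course of checking condition (s1) it computes exactly the formulas $(\exists i, 0_{\exists i})$ and $(\forall i, u_{\forall i})$ for the case $i \notin \forall I$, then declares the theorem proved. Your write-up adds a little more explicit justification for why $\exists i$ is the least element of $\forall I$ above $i$ (via $\forall I = \exists I$ and monotonicity of $\exists$), which the paper handles in a single clause, but the argument is the same.
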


We will now show that {\em any} monadic BL-chain is isomorphic to
$\mathbf{C_I}$ for a suitable monadic Heyting chain $\mathbf{I}$ and
suitable Wajsberg chains $\{\mathbf{C}_i: i \in I\}$.

Fix a monadic BL-chain $\mathbf{A}$ and recall the representation of
BL-chains as ordinal sums of totally ordered Wasjberg hoops given by
Busaniche in \cite{Busaniche04}.

For each $a \in A$, we consider the set $F_a = \{x \in A \setminus
\{1\}: a \implica x = x\}$. We can define the following equivalence
relation on $A$: $a \sim b$ iff $F_a = F_b$. Each equivalence class
$C$ is a convex set and $\mathbf{C'} = \langle C \cup \{1\}, *,
\implica, 1 \rangle$ is a totally ordered Wajsberg hoop. Let $I$ be
the set of equivalence classes ordered by: $C \preceq D$ iff either
$C = D$, or, for all $a \in C$ and for all $b \in D$, $a \leq b$. We
write $C \prec D$ when $C \preceq D$ and $C \neq D$. We also know
that if $C \prec D$, $a \in C$ and $b \in D$, then $b \implica a =
a$ and $a * b = a$. We denote by $C_0$ the equivalence class that
contains the element 0, and by $C_1$ the class that contains the
element 1. Observe that $C_1 = \{1\}$. Then $\mathbf{A}$ is
isomorphic as a BL-algebra to $\mathbf{C}_I$ as defined above.

For each equivalence class $C$ we will show that either $C \subseteq
\forall A$ or $C \cap \forall A = \emptyset$. Since this is
trivially true for the class $C_1$, we assume $C \ne C_1$ in the
sequel.

We now show that $C \cap \forall A$ is an increasing subset of $C$. Suppose there is $c \in C$ such that $\forall c = \exists c = c$ and consider $D = \{a \in C: a \geq c\} \cup \{1\}$. Since $D$ is an bounded increasing subset of $C'$, we can define an MV-structure on $D$. Indeed, $\mathbf{D} = \langle D, \vee, \wedge, *_c, \implica, c, 1\rangle$ where $x *_c y := (x * y) \vee c$. Note that if $a \in D$, $a \ne 1$, then $c \leq a$ and $c = \forall c \leq
\forall a \leq a$, and since $C$ is convex, $\forall a \in D$. It follows that $D$ is closed under $\forall$. We define a unary operation $\exists'$ on $D$ by
$$\exists' x := \forall(x \implica c) \implica c = (\exists x \implica c) \to c.$$

We claim that $\langle \mathbf{D}, \exists', \forall\rangle$ is a
totally ordered monadic MV-algebra. To prove this, it is enough to
check the identities (M1)-(M5).

Since the identities (M1) and (M3) involve only the operations $\to,
\forall, 1$, and these are defined on $\langle \mathbf{D}, \exists', \forall\rangle$ as restrictions of the original operations of
$\mathbf{A}$, these identities hold trivially on $\langle
\mathbf{D}, \exists', \forall\rangle$.

Fix $a,b \in D$. If $\exists a \in D$, then $\exists' a =
(\exists a \implica c) \implica c = \exists a \vee c = \exists a$ and it is
clear that 
\begin{align}
\forall(a \implica \forall b) & = \exists' a \to
\forall b, \\
\forall(\exists'a \vee b) & = \exists' a \vee \forall
b.
\end{align}
If $\exists a \not\in D$, then $\exists'a
= (\exists a \implica c) \implica c = c \implica c = 1$. Then $(3)$ holds
trivially, and we also have that $\forall(a \implica \forall b) = \exists
a \implica \forall b = \forall b = \exists'a \implica \forall b$. This shows
that $\langle \mathbf{D}, \exists', \forall\rangle$ satisfies (M2)
and (M4).

It remains to show the validity of (M5). Take $a \in D$ and note that $\exists' (a *_c a) = \exists'(a^2 \vee c) = (\exists(a^2 \vee c) \implica c) \implica c = (\exists a^2 \implica c) \implica c$ and $(\exists' a) *_c (\exists' a) = ((\exists a \implica c) \implica c)^2 \vee c$. We distinguish the following cases:
\begin{itemize}
\item $\exists a \not\in D$: In this case $\exists a \implica c = c$, so $\exists' a = 1$. Moreover, $\exists a^2 = (\exists a)^2 \not\in D$. Hence, $\exists' a *_c \exists' a = \exists' (a *_c a) = 1$.

\item $\exists a \in D$ and $\exists a^2 \in D$: In this case, $\exists'(a *_c a) = \exists a^2 \vee c = \exists a^2$ and $\exists' a *_c \exists' a = (\exists a \vee c)^2 \vee c = (\exists a)^2 \vee c = \exists a^2$.

\item $\exists a \in D$, but $\exists a^2 \not\in D$: In this case $a \ne 1$, $\exists a \ne 1$, and $\exists a^2 = (\exists a)^2 < c$. Thus $\exists' (a *_c a) = c$ and $\exists' a *_c \exists' a = (\exists a)^2 \vee c = c$.
\end{itemize}
This concludes the verification that equation (M5) holds in $\langle \mathbf{D}, \exists', \forall\rangle$.

We have thus shown that $\langle \mathbf{D}, \exists', \forall\rangle$
is a totally ordered monadic MV-algebra. From Theorem \ref{TEO: en
las MMV cadenas los cuantifs son triviales}, we deduce that $\exists'$, and also $\forall$ and $\exists$, are the identity quantifiers on $D$. Thus $D \subseteq \forall A$. This proves the claim that $C \cap \forall A$ is an increasing subset of $C$.

We now distinguish two possibilities for the totally ordered Wajsberg hoop $\mathbf{C}'$.

\begin{itemize}
\item $\mathbf{C}'$ is a bounded Wajsberg hoop, that is, there exists $0' \in C$ such that $0' \leq a$, for all $a \in C$.

In this case we can endow $\mathbf{C}'$ with a natural MV-structure.
We make a further distinction according to whether $0' \in \forall
A$ or not.

\begin{itemize}
\item Assume $0' \in \forall A$.

In this case, since $C \cap \forall A$ is an increasing subset of $C$, it follows that $C \subseteq \forall A$.

\item Assume $0' \not\in \forall A$.

In this case $\forall 0' < 0' < \exists 0'$. We will show that $C
\cap \forall A = \emptyset$. Indeed, assume there is $a \in C$ such
that $\forall a = \exists a = a$. As $0' \leq a$, we have that $0'
\leq \exists 0' \leq \exists a = a$, so $\exists 0' \in C$. Note
that $\exists 0' \implica 0' \neq 1$ and $\exists 0' \implica 0'=
\exists (0'*0') \implica 0'= (\exists 0'* \exists 0') \implica 0'=
\exists 0' \implica (\exists 0' \implica 0')$, hence $\exists 0'
\implica 0'\in F_{\exists 0'}= F_{0'}$, and then $0'\implica
(\exists 0' \implica 0')= \exists 0'\implica 0'$, which is not
possible since $0'\implica (\exists 0' \implica 0')=1$ and $\exists
0'\implica 0'\neq 1$.

Observe that, since $C \cap \forall A = \emptyset$, for every $a \in
C$, $\forall a = \forall 0'$ and $\exists a = \exists 0'$.
\end{itemize}

\item $\mathbf{C'}$ is an unbounded Wajsberg hoop, and hence a cancellative hoop.

We claim that for any $a \in C$: if $\forall a \not\in C$, then
$\exists a \not\in C$.

By way of contradiction, assume that $\forall a \not\in C$ and
$\exists a \in C$. Note that $\forall a < a < \exists a$ and hence
$\exists a \implica a \in C$. Observe also that, since $a = a \wedge
\exists a = \exists a * (\exists a \implica a)$, we have that $\exists a
= \exists a * \exists(\exists a \implica a)$. If $\exists(\exists a \to
a) \in C$, then, using the cancellative property, we get that
$\exists(\exists a \implica a) = 1$, a contradiction. This shows that
$\exists(\exists a \implica a) \not\in C$ and, consequently,
$\exists(\exists a \implica a) > b$ for every $b \in C$.

Suppose now that $\exists a \implica a \leq \exists a$. Then
$\exists(\exists a \implica a) \leq \exists a$, which is a contradiction.
On the other hand, suppose that $\exists a \leq \exists a \implica a$. In
this case, $\exists a^2 = (\exists a)^2 \leq a$ and, moreover,
$\exists a^2 \leq \forall a$. However $\forall a \not\in C$ and
$\forall a < b$ for every $b \in C$. Thus $\forall a < a^2 \leq
\exists a^2 \leq \forall a$, a contradiction.

Consequently, if there is $a \in C$ such that $\forall a \not\in C$, then $\exists a \not\in C$ and it is clear that $C \cap \forall A = \emptyset$. Otherwise, if, for every $a \in C$, $\forall a \in C$, then, using the fact that $C \cap \forall A$ is an increasing subset of $C$, it follows that $C \subseteq \forall A$.

\end{itemize}

From what we have just proved, it follows that, for any given $a \in
A$, there are two possibilities:
\begin{itemize}
\item if $C_a \subseteq \forall A$, then $\forall a =
\exists a = a$;

\item if $C_a \cap \forall A = \emptyset$, then $C_{\forall a} \prec
C_a \prec C_{\exists a}$; moreover, since $C_{\forall a}, C_{\exists
a}$ must be contained in $\forall A$, then $\forall a$ is the
greatest element of $C_{\forall a}$ and $\exists a$ is the least
element of $C_{\exists a}$.
\end{itemize}

Finally, note that we can define a monadic Heyting structure on $I =
A/\mathord{\sim} = \{C_a: a \in A\}$. Indeed, since $(I, \preceq)$
is a bounded totally ordered set with least element $C_0$ and
greatest element $C_1$, it can be turned into a totally ordered
Heyting structure $\mathbf{I}$:

\medskip

\hspace{2.5cm}
\begin{minipage}{5cm}
$C_a \wedge C_b = C_{a \wedge b}$,

\smallskip

$C_a \vee C_b = C_{a \vee b}$,
\end{minipage}
\begin{minipage}{7cm}
$C_a \implica C_b = \begin{cases} C_1 & \text{ if } C_a \preceq C_b, \\ C_b
& \text{ if } C_b \prec C_a.\end{cases}$
\end{minipage}

Now consider $S = \{C_a: a \in \forall A\}$. It is immediate that $\mathbf{S}$ is a subalgebra of $\mathbf{I}$. Moreover, $\mathbf{S}$ is a
relatively complete subalgebra. Indeed, let $C_a$ be an arbitrary
element of $I$ and $C_b \in S$ such that $C_b \preceq C_a$. There
are two possible situations. If $C_b = C_a$, then $C_a \subseteq
\forall A$, so $\forall a = a$ and $C_b = C_a = C_{\forall a}$. If
$C_b \prec C_a$, then $b < a$, so $b \leq \forall a$ and $C_b
\preceq C_{\forall a}$. This shows that $C_{\forall a}$ is the
greatest element in $S$ below $C_a$. In an analogous manner, it may
be shown that $C_{\exists a}$ is the least element in $S$ above
$C_a$.

Since $\mathbf{S}$ is a relatively complete subalgebra of $\mathbf{I}$, by
Theorem \ref{TEO: cadenas Heyting}, $S$ defines a monadic Heyting
structure on $\mathbf{I}$.

If we consider now the map $\psi\colon A \to C_I$ given by $\psi(a) =
(C_a,a)$, it is clear from what we have shown above that $\psi$ is
an isomorphism of monadic BL-algebras.

We have thus finished the proof of the characterization of all
monadic BL-chains.

\begin{thm}
Any monadic BL-chain is isomorphic to some $\mathbf{C_I}$.
\end{thm}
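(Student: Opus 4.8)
The plan is to build on Busaniche's representation of BL-chains as ordinal sums of totally ordered Wajsberg hoops, already recalled above, and to show that the monadic structure of a given monadic BL-chain $\mathbf{A}$ respects that block decomposition. First I would form, for each $a \in A$, the set $F_a = \{x \in A \setminus \{1\} : a \implica x = x\}$, the equivalence relation $a \sim b \iff F_a = F_b$, the totally ordered index set $I = A/\mathord{\sim}$ of blocks, and the totally ordered Wajsberg hoop $\mathbf{C}'$ attached to each block $C$, so that $a \mapsto (C_a, a)$ is an isomorphism of $\mathbf{A}$ onto $\mathbf{C}_I$ as a BL-algebra.

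The heart of the argument is the dichotomy that every block $C$ satisfies either $C \subseteq \forall A$ or $C \cap \forall A = \emptyset$. To establish it I would first show that $C \cap \forall A$ is upward closed in $C$: if some $c \in C$ is fixed by the quantifiers, then on $D = \{a \in C : a \geq c\} \cup \{1\}$ one manufactures a bounded MV-structure with $x *_c y := (x * y) \vee c$ and the quantifier $\exists' x := (\exists x \implica c) \implica c$, verifies axioms (M1)--(M5) for $\langle \mathbf{D}, \exists', \forall\rangle$ (the nontrivial cases (M2), (M4), (M5) via the split on whether $\exists a \in D$), concludes it is a monadic MV-chain, and then invokes \thmref{TEO: en las MMV cadenas los cuantifs son triviales} to force $\forall$ and $\exists$ to act as the identity on $D$, so $D \subseteq \forall A$. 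Armed with upward closedness I would then split on whether $\mathbf{C}'$ is bounded or is an unbounded (hence cancellative) Wajsberg hoop. In the bounded case with least element $0'$: if $0' \in \forall A$ then $C \subseteq \forall A$; if $0' \notin \forall A$ one reaches a contradiction from $\exists 0' \in C$ using $\exists 0' \implica 0' \in F_{\exists 0'} = F_{0'}$, so $C \cap \forall A = \emptyset$. In the cancellative case one shows $\forall a \notin C$ implies $\exists a \notin C$ by cancelling in $\exists a = \exists a * \exists(\exists a \implica a)$, so once more only the two options survive.

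From the dichotomy one reads off, for each $a$: either $\forall a = \exists a = a$ (when $C_a \subseteq \forall A$), or $C_{\forall a} \prec C_a \prec C_{\exists a}$ with $\forall a = \max C_{\forall a}$ and $\exists a = \min C_{\exists a}$ (when $C_a \cap \forall A = \emptyset$); in particular the blocks $C_i$ satisfy the greatest/least-element hypotheses needed in the construction of $\mathbf{C_I}$. I would then endow $I$ with its natural totally ordered Heyting structure and note that $S = \{C_a : a \in \forall A\}$ is a relatively complete subalgebra (the greatest element of $S$ below $C_a$ is $C_{\forall a}$, the least above it is $C_{\exists a}$), so by \thmref{TEO: cadenas Heyting} this $S$ defines a monadic Heyting chain $\mathbf{I}$. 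Finally I would check that $\psi(a) = (C_a, a)$, already a BL-isomorphism, also intertwines $\forall$ and $\exists$ by comparing the explicit formulas for the quantifiers on $\mathbf{C_I}$ with the description of $\forall a, \exists a$ obtained from the dichotomy, yielding $\mathbf{A} \cong \mathbf{C_I}$ as monadic BL-algebras. The main obstacle is precisely the dichotomy lemma; the auxiliary MV-structure on $D$ and the cancellative-hoop computation are the only genuinely delicate parts, everything else reducing to bookkeeping on top of results already proven.
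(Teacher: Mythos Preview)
Your proposal is correct and follows essentially the same route as the paper's proof: Busaniche's block decomposition, the dichotomy $C \subseteq \forall A$ or $C \cap \forall A = \emptyset$ established via the auxiliary MMV-chain $\langle \mathbf{D}, \exists', \forall\rangle$ together with the bounded/cancellative case split, then the induced monadic Heyting structure on $I$ via $S = \{C_a : a \in \forall A\}$ and the verification that $\psi(a) = (C_a,a)$ respects the quantifiers. Even the technical details you single out (the definition of $\exists'$, the $F_{\exists 0'} = F_{0'}$ contradiction, the cancellation in $\exists a = \exists a * \exists(\exists a \implica a)$) match the paper's argument.
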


To close this section, we would like to remark that the subvariety
of $\mathbb{MBL}$ generated by monadic BL-chains may be axiomatized
within $\mathbb{MBL}$ by a single identity: $$\forall (x \vee y) \approx
\forall x \vee \forall y.$$ Indeed, it is easily verified that any
monadic BL-chain satisfies this identity. Conversely, consider a
subdirectly irreducible algebra $\mathbf{A} \in \mathbb{MBL}$ that
satisfies the identity. Given $x,y \in A$ we know that $(x \implica y)
\vee (y \implica x) = 1$. Then $\forall(x \implica y) \vee \forall (y \implica x) =
1$, but, since $\forall A$ is totally ordered, either $\forall(x \to
y) = 1$ or $\forall(y \implica x) = 1$. It follows that $x \leq y $ or $y
\leq x$. This proves that $\mathbf{A}$ is totally ordered.

\subsection*{Conclusions and further work}

In this work we have presented an equivalent algebraic semantics for the monadic fragment of Hájek's fuzzy predicate logic. This class turned out to be a very interesting variety, whose algebras we called {\em monadic BL-algebras}.

We have started a study of some subvarieties of $\mathbb{MBL}$, but we think that a deeper study of many of its subvarieties is in order. For example, it would be interesting to know the lattice of subvarieties of monadic product algebras, Gödel algebras, and the variety generated by chains. In each case, we would like to determine whether they are generated by their finite members.

Another variety worth of research is the one generated by functional monadic BL-algebras defined over continuous t-norms. This is a proper variety because it satisfies the equation $\forall (x * \forall y) \approx \forall x * \forall y$.

\end{document}